\numberwithin{equation}{section}
\theoremstyle{definition}
\theoremstyle{plain}
\newtheorem{satz}{Theorem}[section]
\newtheorem{defi}[satz]{Definition}
\newtheorem{cor}[satz]{Corollary}
\newtheorem{lem}[satz]{Lemma}
\newtheorem{prop}[satz]{Proposition}
\newtheorem{rem}[satz]{Remark}
\newtheorem{theorem}[satz]{Theorem}
\newcommand{\supp}{\operatorname{supp}}
\newcommand{\mix}{{\rm mix}}
\newcommand{\re}{\ensuremath{\mathbb{R}}}
\newcommand{\N}{\ensuremath{\mathbb{N}}}
\newcommand{\n}{\ensuremath{{\N}_0}}
\newcommand{\nd}{\ensuremath{\n^d}}
\newcommand{\zz}{\ensuremath{\mathbb{Z}}}
\newcommand{\Z}{{\ensuremath{\zz}^d}}
\newcommand{\C}{\ensuremath{\mathbb{C}}}
\newcommand{\tor}{\ensuremath{\mathbb{T}}}
\newcommand{\ca}{\ensuremath{\mathcal A}}
\newcommand{\cF}{\ensuremath{\mathcal F}}
\newcommand{\bk}{\ensuremath{\mathbf k}}
\newcommand{\bj}{\ensuremath{\mathbf j}}
\newcommand{\bn}{\ensuremath{\mathbf n}}
\newcommand{\bx}{\ensuremath{\mathbf x}}
\newcommand{\bX}{\ensuremath{\mathbf X}}
\newcommand{\by}{\ensuremath{\mathbf y}}
\newcommand{\bc}{\ensuremath{\mathbf c}}
\DeclareMathOperator*{\argmin}{arg\,min}
\newcommand{\hyp}{\operatorname{%
\mathchoice{%
\includegraphics{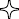}}{%
\includegraphics{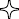}}{%
\includegraphics{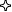}}{%
\includegraphics{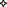}}
}}
\newcommand{\bproof}{\begin{proof}}
\newcommand{\eproof}{\end{proof}}
\newlength{\fixboxwidth}
\newcommand{\be}{\begin{equation}}
\newcommand{\ee}{\end{equation}}
\newcommand{\beq}{\begin{eqnarray}}
\newcommand{\beqq}{\begin{eqnarray*}}
\newcommand{\eeq}{\end{eqnarray}}
\newcommand{\eeqq}{\end{eqnarray*}}
\def\blfootnote{\xdef\@thefnmark{}\@footnotetext}
\begin{document}
\title[Best $m$-term approximation in weighted Wiener spaces]{Best $m$-term trigonometric approximation in weighted Wiener spaces and applications}
\author[Moritz Moeller]{Moritz Moeller}
\address{%
Chemnitz University of Technology \\
Faculty of Mathematics}
\author[Serhii Stasyuk]{Serhii Stasyuk}
\address{%
Chemnitz University of Technology \\
Faculty of Mathematics\\
Institute of Mathematics of NAS of Ukraine}
\author[Tino Ullrich]{Tino Ullrich}
\address{%
Chemnitz University of Technology \\
Faculty of Mathematics}
\email{tino.ullrich@math.tu-chemnitz.de}
\subjclass{MSC2020: Primary 42A10; Secondary 41A25, 41A46, 41A63, 42A16, 46E35, 94A20}

\keywords{multivariate approximation; best \(m\)-term approximation, rate of convergence; sampling, dominating mixed smoothness}

\blfootnote{\textit{Keywords and phrases:} multivariate approximation; 
best \(m\)-term approximation; 
uniform norm; 
rate of convergence; 
sampling recovery;
dominating mixed smoothness.}
\blfootnote{\textit{2020 Mathematics subject classification:} 42A10, 41A25, 41A46, 41A63, 42A16, 46E35, 94A20}

\begin{abstract}

In this paper we study best \(m\)-term trigonometric approximation in weighted Wiener spaces and {its} consequences for Besov and Sobolev spaces with bounded mixed derivative/difference. We obtain several sharp asymptotic bounds for weighted Wiener spaces  {including the quasi-Banach case}. It has  {recently been observed} that best \(m\)-term trigonometric widths in the uniform norm together with recovery algorithms stemming from compressed sensing serve to control the optimal sampling recovery error in various relevant spaces of multivariate functions. We use a collection of old and new tools as well as novel findings to extend the recovery bounds to classical multivariate smoothness  {spaces.} It turns out that embeddings into Wiener spaces serve as a powerful tool to improve certain recent bounds.
\end{abstract}

\maketitle

\section{Introduction}

There is growing interest in estimates for the best $m$-term trigonometric approximation errors of multivariate functions in the uniform or Wiener norm.  {These approximation error bounds serve} as upper bounds for (non-linear) sampling recovery errors \cite{JUV23}, \cite{MPU25_1} in $L_q$.   
We study multivariate weighted Wiener spaces \(S^{r}_{\theta} \ca\),  {see Definition \ref{Wmix},} with mixed smoothness $r>0$ and summation index $0<\theta<\infty$. The spaces of interest can be seen as weighted variants of the classical Wiener algebra $\ca$, which are compactly embedded into $L_\infty$ on the $d$-torus. They can be seen as periodic versions of so-called Barron spaces \cite{Ba93}, \cite{Vo22}, see also Wermer \cite{We54}, and share similar properties regarding non-linear approximation. 
Recently,  {several authors studied their approximation theoretic properties, see Chen and Jiang \cite{CJ24} to begin with. DeVore, Petrova and Wojtaszczyk \cite{DPW24} worked on the best \(m\)-term trigonometric approximation for generalized Wiener spaces. A first relation between best \(m\)-term approximation and sampling numbers including first  results for mixed Wiener spaces are presented in Jahn, T. Ullrich and Voigtlaender \cite{JUV23}. Kolomoitsev, Lomako and Tikhonov \cite{KLT23} worked on linear approximation for Wiener spaces. Tractability issues for high-dimensional approximation have been studied by Krieg \cite{Kri23} and Moeller, Stasyuk, T. Ullrich \cite{MSU24_1}. Gelfand numbers and best \(m\)-term approximation for weighted Wiener spaces have been adressed by Moeller \cite{Mo23}. Instance optimality and further recovery methods are presented in Moeller, Pozharska and T.Ullrich \cite{MPU25_1}. Several $s$-numbers in some mixed spaces including mixed Wiener spaces play a role in V.D. Nguyen, V.K. Nguyen and Sickel \cite{NNS22}. Finally, we would like to refer to  Stepanets \cite{Stepanets_UMZh_2001_N8}, \cite[Chapter 11]{Stepanets_MAT_2005} for early results on generalized Wiener spaces.} 

In particular, we are interested in bounds on the best $n$-term trigonometric approximation errors appearing in the results in \cite{JUV23, MPU25_1} which yield upper bounds on the (non-linear) sampling widths in \(L_q\), $2\leq q\leq \infty$. In the very recent contributions \cite{MPU25_1} the authors showed some refinements of these results that allow for bounding the sampling recovery error for any instance $f$ by the error of best $m$-term approximation of this $f$ (instance optimality).  {In this paper we contribute another result of this type, where the target space is the Wiener Algebra \(\ca\) instead of \(L_q\). Results of these type have been studied in \cite{Kri23} and we improve on them, by showing an instance optimal version. For \(m \geq Cnd \log^\ast(n)^2 \log^\ast(M)\) iid random samples Theorem \ref{samp_A1} states that with high probability it holds
\begin{equation} 
 \| f - R_{m}\big( f;\bX)\|_{L_q} \leq C n^{1/2-1/q}\left(n^{-1/2} \sigma_n\big(f\big)_{{\ca} } + E_{[-M,M]^d}\big(f\big)_{L_\infty}\right)\,.
\end{equation} }

The main strategy to prove bounds for best $n$-term trigonometric approximation errors for weighted Wiener spaces is a classical decomposition strategy. Starting from a decomposition according to frequencies with roughly equal weights we then employ results for unweighted Wiener spaces. Throughout this paper we will make use of results that we obtained in our previous paper \cite{MSU24_1} where we dealt with unweighted Wiener spaces and their embeddings, see also Temlyakov \cite{Tem15}. 

The first result shown with this strategy, Theorem \ref{sigma_m_A_0<p<infty,q_bigger=_2}, gives, for $0<\theta\leq\infty$, $2\leq q < \infty$ and  $r>(1-\frac{1}{\theta})_+$, the asymptotic bound 
 $$
\sigma_m(S^{r}_{\theta}\mathcal{A})_{L_q}
 \asymp m^{-(r+{\frac{1}{\theta}-\frac{1}{2}})} \log^\ast(m)^{(d-1)r},
 $$
where in the case of \(q = \infty\) we get an additional \(\sqrt{\log^\ast(m)}\) in the upper bound (with the same lower bound as before). Similar results of this type were already shown for mixed Wiener spaces in \cite{JUV23}, however, they only work under the artificial condition \(r>1/2\) due to the techniques used in those proofs (embedding into Sobolev \(L_2\) space first). Here, with the groundwork from \cite{MSU24_1}, we can get this for $r>(1-1/\theta)_+$, and specifically if \(\theta \leq 1\), i.e.\ in the setting of mixed Wiener spaces, we only have the trivial condition \(r>0\). Results for related isotropic spaces have been obtained by DeVore and Temlyakov \cite{DeTe95}.

In addition, motivated by Theorem \ref{samp_A1}, we  {consider embeddings of weighted into unweighted} Wiener spaces. Theorem  {\ref{A2A}} states, for  {\( \theta, \eta \in (0,\infty]\)} and \(r > \big(\frac{1}{\eta} - \frac{1}{\theta}\big)_+\),
\be
\sigma_m(S^r_\theta\ca)_{\ca_\eta} \asymp m^{\frac{1}{\eta} - \frac{1}{\theta}-r} \log^\ast(m)^{(d-1)r},
\ee
 where we measure the error in the \(\eta\)-Wiener space 
 $\mathcal{A}_{\eta}=S^{0}_{\eta}\mathcal{A}$, a simple modification of the classical Wiener space (see Definition \ref{Wmix}). As a consequence, using Theorem  {\ref{samp_A1}}, the result above supports the formulation of upper bounds on the non-linear sampling widths.
 {Corollary \ref{nlsn_up_good}} states that for \(0 < \theta \leq \infty\), $2\leq q\leq \infty$ and \( r > (1 - 1/\theta)_+ \) it holds
\begin{equation} 
\varrho_{m \log(m)^3 }(S^{r}_{\theta}\mathcal{A})_{L_q}  \lesssim m^{-( r+{\frac{1}{\theta}+\frac{1}{q}-1})} \log^\ast(m)^{(d-1) r}.
\end{equation}

For \(q = 2\) this asymptotic bound is sharp 
 up to a power of the logarithmic factor 
 that is independent of \(d\) otherwise at least the main rate is known to be sharp, see \cite[Remark 1]{MPU25_1}. Indeed, lower bounds on the linear and non-linear sampling widths in weighted Wiener spaces can be shown using the natural lower bounds obtained by linear and Gelfand widths, see Pietsch \cite{Pie87} for an axiomatic approach. The Kolmogorov and Gelfand widths in this setting were studied in \cite{NNS22}.  {The lower bound on the linear sampling widths via the Kolmogorov/linear widths (\cite[Theorem 4.6 (ii)]{NNS22}) is
$$
\varrho^{\operatorname{lin}}_m\big(S^{r}_{1}\mathcal{A}\big)_{L_2} \geq d_m\big(S^{r}_{1}\mathcal{A}\big)_{L_2} \gtrsim \ m^{- r} \log^\ast(m)^{(d-1) r } .$$ 
In fact, Kolmogorov widths in $L_2$ equal the linear widths which are a trivial lower bound for the linear sampling  {widths.} This shows that the error in non-linear sampling (for \(q = 2\) and \(\theta =1\)) decays faster by \(\frac{1}{2}\) in the main rate than the error for linear sampling. }

In addition, we study  embeddings of these weighted Wiener spaces into dominating mixed smoothness Besov-Sobolev spaces  {(and vice versa)}, so that our results can be transferred between these two settings. 
Our first result here, Theorem \ref{SB_subset_SA_theta<2<p}, embeds Besov spaces into Wiener spaces for $0<\theta \leq 2\leq p<\infty$ and $r\geq 0$ that is
 \begin{equation}
 S^{r+\frac{1}{\theta}-\frac{1}{2}}_{p,\theta}B
 \hookrightarrow 
 S^{r}_{\theta}\mathcal{A}\,,
 \end{equation}
where the embedding has norm $1$.  

Our second result, Theorem \ref{SB+1/p_subset_SA}, also embeds Besov spaces into Wiener spaces for  {$1<p\leq 2$, $0<\theta\leq 2$} and \(r \geq 0\), that is
 \begin{equation}
 S^{r+{\frac{1}{p}}+{\frac{1}{\theta}-1}}_{p,\theta}B
 \hookrightarrow S^{r}_{\theta}\mathcal{A} .
 \end{equation} 

On the other hand, if  {$2\leq p<\infty$, $2\leq \theta\leq\infty$} and \( r > 0 \) we show the reverse embedding in Theorem \ref{SA^-1/p_subset_SB_2<p,theta<infty}
 \begin{equation} 
 S^{r+1-\frac{1}{p}-\frac{1}{\theta}}_{\theta}\mathcal{A}
 \hookrightarrow S^r_{p,\theta}B .
 \end{equation}
With the aid of these three inclusions together with the Jawerth-Franke embedding (see \cite[Lem.\ 3.4.3]{DTU18} and \cite[Rem.\ 3.4.4]{DTU18}) we are also able to establish embeddings between weighted Wiener spaces and Sobolev spaces with mixed smoothness
  \be  
 S^{r+\frac{2}{p}-1}_{p}W
 \hookrightarrow S^{r}_{p}\mathcal{A} \,, \ 
 1<p\leq 2,
 \ee  
 \be 
 S^{r+1-\frac{2}{p}}_{p}\mathcal{A}
 \hookrightarrow S^r_{p}W \,, \ 
 2\leq p<\infty,
 \ee  
 for $r\geq 0$ in Theorems \ref{SW+2/p_subset_SA_p} and  \ref{SA-2/p_subset_SW_p}, consequently.  
Moreover, we can use  {some of} these embeddings to establish sharp bounds on the studied quantities of Besov-Sobolev spaces with mixed smoothness in the Wiener norm.

Theorem \ref{sigma_m_Besov_in_Wiener_q<theta}  
states for 
  {$1<p\leq 2$, $1\leq \theta\leq 2$} 
 and \(r > \frac{1}{p}\)
 that it holds 
 $$ 
   \sigma_{m}(S^{r}_{p,\theta}B)_{\mathcal{A}} \asymp
   m^{-(r -\frac{1}{p})}
   \log^\ast(m)^{(d-1)(r + 1 -\frac{1}{p} -\frac{1}{\theta})}  .
 $$ 
In Theorem \ref{WinA_q<p} we  {show} for $1<p\leq 2$ and $r>\frac{1}{p}$ that it holds
 $$ 
   \sigma_{m}(S^{r}_{p}W)_{\mathcal{A}} \asymp
   m^{-(r -\frac{1}{p} )}
   \log^\ast(m)^{(d-1)(r+1 -\frac{2}{p})}.
 $$

\vspace{5pt}
{\bf Notation.} As usual, $\N$ denotes the natural numbers, $\N_0=\N\cup\{0\}$,
$\zz$ denotes the integers,
$\re$ the real numbers, and $\C$ the complex numbers. The letter $d$ is always reserved for the underlying dimension in $\re^d$, $\zz^d$, etc. For $a\in \re$ we denote $a_+ \coloneqq \max\{a,0\}$, for \(\bx \in \re^d\) we define this pointwise as \( \bx_+ \coloneqq ((x_1)_+,\ldots,  (x_d)_+)\), and by $ \bx = (x_1,\ldots,x_d)>0$ we mean that each coordinate is positive.
By \(\bx \by \) we indicate the  {usual} inner product. If $X$ and $Y$ are two (quasi-)normed spaces, the (quasi-)norm of an element $x$  in $X$ will be denoted by 
 \(\Vert x\Vert_X\). We use the usual notations \(L_p\) for the Lebesgue \mbox{(quasi-)}Banach spaces and \(\ell_p\) for the sequence spaces, including the slightly less common notation \(\ell_0\), with \(\Vert \bx \Vert_{\ell_0} \coloneqq \# \supp{\bx}\).
 The notation $X \hookrightarrow Y$ indicates that the identity operator is continuous.  
 For two sequences $a_n$ and $b_n$ we will write
$a_n \lesssim b_n$ if there exists a constant $c>0$ and \(n_0 \in \N \) such that $a_n \leq c\,b_n$
for all $n \geq n_0$. We will write $a_n \asymp b_n$ if $a_n \lesssim b_n$ and $b_n\lesssim a_n$.  
By \(\tor\) we indicate the torus \( [0,1)\) where we identify the endpoints. The set of continuous functions  {(on $d$-dimensional torus $\tor^d$) is denoted by \(C(\tor^d)\).} We denote by $f\in \mathcal{T}([-M,M]^d)$ that \(f\) is a trigonometric polynomial with support on the frequencies in \(\zz^d\cap [-M,M]^d\)  {(i.e \(f(\bx) = \sum_{\bk \in \Z \cap [-M,M]^d} a_\bk \exp(2\pi \mathrm{i} \bk \bx) \))}. For a set \(A\) we denote its cardinality by \(\#A\). When we write a logarithm 
 \(\log^\ast(m)\) we mean \(\max\big\{ \log_2(m), 1\big\}\).
 
\section{Weighted Wiener spaces}

Before we discuss weighted Wiener spaces, let us briefly introduce the classical Wiener algebra.

\begin{defi}
For $f\in L_1(\mathbb{T}^d)$ we define the norm of  {\(f\) in} the  Wiener Algebra as the sum of absolute values of the Fourier coefficients
 $$
\|f\|_{\ca}
 := \|f\|_{\ca(\mathbb{T}^d)}
 := \sum_{\bk\in \mathbb{Z}^d} |\hat{f}(\bk)| \,
 $$
 with
 $$
\hat{f}(\bk)=\int_{\tor^d}f(\bx)\exp(-2\pi \mathrm{i} \bk\bx)\,\mathrm{d}\bx\quad,\quad \bk \in \mathbb{Z}^d\,.
 $$
 \noindent
 We  {define} the Wiener space \(\ca\) to be the space of all functions from \(L_1(\tor^d)\) where this norm is finite.
 \end{defi}

 \begin{defi}[Weighted (mixed) Wiener space] \label{Wmix}
For \( r \geq 0\) and \(0 < \theta < \infty\) we define the (quasi-)norm 
 \[ \|f\|_{S^{r}_{\theta} \ca} \coloneqq \Big( 
\sum_{\bk\in\Z} \prod_{i=1}^d (1+|k_i|)^{ r\theta}|\hat{f}(\bk)|^\theta \Big)^\frac{1}{\theta}. \]
The corresponding space
 \begin{equation}
S^{r}_{\theta}\ca(\tor^d) \coloneqq S^{r}_{\theta}\ca \coloneqq \Big\{ f \in L_1(\tor^d)~:~\Vert f\Vert_{S^{r}_{\theta} \ca} < \infty \Big\}  
 \end{equation}
is called weighted (quasi-)Wiener space of functions with mixed smoothness.
In the case \(\theta = \infty\) the usual modification is made and the space is called Korobov space (see e.g. \cite[Section 3.3]{DTU18}). The space \(S^0_1\ca\) is simply the (original) Wiener space \(\ca\), and $\ca_{\theta} :=S^{0}_{\theta} \ca$. The mixed Wiener spaces \(S_1^r \ca\) are often referred to in the literature as \(\ca^r_\mix\).
 \end{defi}

\begin{rem}\label{embed}
While the space \(S^{r}_{\theta} \ca\) is a function space, it can also be embedded into \(\ell_{\theta}\) quite naturally for all \(\theta > 0\) by the following  {isometric} operator 
 $$
 A_r f= \bigg(\prod_{i=1}^d (1+|k_i|)^r \hat{f}(\bk)\bigg)_{\bk\in \zz^d}.
 $$
 This also immediately enables the usage of the Stechkin Lemma \ref{stechkin} for\hspace{-0.1pt} Wiener spaces.
\end{rem}

\section{Several widths}

We start by defining several approximation widths. These quantities are closely related to \(s\)-numbers and share many of their properties, see e.g. \cite{Pie87}. Our main focus lies on best \(m\)-term trigonometric approximation widths which probably first appeared in Pietsch \cite{Pie81}. The best \(m\)-term approximation did return to the forefront of research when the theory of compressed sensing evolved. The recent papers \cite{JUV23, Kri23,  MPU25_1} linked these widths to errors of function recovery from samples. 

\subsection{Trigonometric best $m$-term approximation widths}

\begin{defi}[Best \(m\)-term approximation width] \label{def:bm} Let $X$ be a quasi-Banach space and $\mathcal{D} \subset X$ a dictionary.

{\em (i)} For $f\in X$  we
define its best \(m\)-term  approximation error with respect to the dictionary $\mathcal{D}$ as 
\[ \sigma_m(f)_X \coloneqq \sigma_m(f,\mathcal{D})_X \coloneqq\inf_{s \in \Sigma_{m}}  \|f - s\|_X, \]
where \(\Sigma_m\) is the set of all linear combinations of at most \(m\) elements of the dictionary \(\mathcal{D}\).

{\em (ii)} Let \(\cF \hookrightarrow X\) denote a further quasi-Banach space continuously embedded into $X$. The best $m$-term approximation width of \(\cF \hookrightarrow X\) is defined accordingly as
\[ \sigma_m(\cF)_X \coloneqq \sup_{ \Vert f \Vert_\cF \leq 1} \sigma_m(f)_X.\]
\end{defi}
Note, the best \(m\)-term approximation widths satisfy a multiplicative property (typical for \(s\)-numbers, although they are not $s$-numbers), see e.g. \cite[Lemma 2.8]{Mo23}. For spaces \(X,Y,Z\) with corresponding embeddings the following relation holds
\begin{equation}\label{mul_snum}
    \sigma_{m_1+m_2}(X)_Z \leq \sigma_{m_1}(X)_Y \sigma_{m_2}(Y)_Z.
\end{equation}

In this paper we essentially deal with two different, however related, situations. Our main interest is the multivariate trigonometric dictionary given by $\mathcal{T}^d = \{\exp(2\pi \mathrm{i} \bk\cdot)~:~\bk \in \mathbb{Z}^d\}$\,, such that  
$$\Sigma_m := \Big\{\sum\limits_{\bk \in \Lambda_m} a_\bk \exp(2 \pi \mathrm{i} \bk\cdot)~:~\# \Lambda_m \leq m, a_{\bk} \in \mathbb{C}\Big\}\,.
$$ 
On the other hand we consider a discrete situation for sequences $\bx \in \ell_p$, where
$$
    \sigma_m(\bx)_X := \inf\limits_{\|\by\|_{\ell_0}\leq m} \|\bx-\by\|_X
$$
fitting also to the framework from Definition \ref{def:bm}. Note, that the corresponding dictionary is given by the set of all sequences with a single nonzero entry. A classical result for the best \(m\)-term approximation error measured in $\ell_q$ is often referred to as  { the Stechkin} Lemma, see \cite[Lemma 7.4.2]{DTU18} and the references there.

\begin{lem} \label{stechkin}
Let \(0 < p <q \leq \infty \). Then it holds for all \(\bx \in \ell_p\) 
\begin{equation}
\sigma_m(\bx)_{\ell_q} \leq (m+1)^{\frac{1}{q}-\frac{1}{p}} \|\bx\|_{\ell_p}\,. 
\end{equation}
\end{lem}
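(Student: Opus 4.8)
The plan is to prove the Stechkin lemma by a direct rearrangement argument. First I would reduce to the case where $\bx$ has only finitely many nonzero entries (or argue by a limiting/monotone convergence argument at the end), and pass to the nonincreasing rearrangement $(x_j^*)_{j\ge 1}$ of the absolute values $(|x_k|)$. The key observation is that the optimal $m$-term approximant in the $\ell_q$ norm is obtained by keeping the $m$ largest (in modulus) entries of $\bx$ and discarding the rest, so that
\[
\sigma_m(\bx)_{\ell_q}^q = \sum_{j>m} (x_j^*)^q.
\]

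The main step is then to bound $\sum_{j>m}(x_j^*)^q$ using only $\|\bx\|_{\ell_p}^p = \sum_{j\ge 1}(x_j^*)^p$ and the monotonicity of the rearrangement. The standard trick: for each $j>m$ we have, since $x_m^*\ge x_{m+1}^*\ge\cdots$ and there are at least $m$ terms of size $\ge x_j^*$ preceding index $j$,
\[
x_j^* \le \Big(\frac{1}{m+1}\sum_{i\le m+1} (x_i^*)^p\Big)^{1/p}\le (m+1)^{-1/p}\|\bx\|_{\ell_p},
\]
or more simply $j\,(x_j^*)^p \le \sum_{i\le j}(x_i^*)^p\le \|\bx\|_{\ell_p}^p$, giving $x_j^*\le j^{-1/p}\|\bx\|_{\ell_p}$. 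Then, writing $q>p$,
\[
\sum_{j>m}(x_j^*)^q = \sum_{j>m}(x_j^*)^{q-p}(x_j^*)^p \le \Big(\sup_{j>m}(x_j^*)^{q-p}\Big)\sum_{j>m}(x_j^*)^p \le (m+1)^{-(q-p)/p}\|\bx\|_{\ell_p}^{q-p}\,\|\bx\|_{\ell_p}^{p},
\]
where I used $x_j^*\le (m+1)^{-1/p}\|\bx\|_{\ell_p}$ for $j>m$. Taking the $q$-th root yields exactly $\sigma_m(\bx)_{\ell_q}\le (m+1)^{1/q-1/p}\|\bx\|_{\ell_p}$. For $q=\infty$ the argument degenerates to $\sigma_m(\bx)_{\ell_\infty}=x_{m+1}^*\le (m+1)^{-1/p}\|\bx\|_{\ell_p}$, which is the claimed bound with the convention $1/q=0$.

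I do not expect any serious obstacle here; this is a classical estimate. The only points requiring mild care are: justifying that the greedy (largest-coefficients) choice is genuinely optimal for the $\ell_q$ quasi-norm — which follows because for any index set $\Lambda$ of size $\le m$ the error is $\big(\sum_{k\notin\Lambda}|x_k|^q\big)^{1/q}$ and this is minimized by taking $\Lambda$ to contain the $m$ largest moduli — and handling the quasi-norm case $q<1$ if one wants full generality (the inequality $\sum(x_j^*)^q$ still splits correctly since we are only using monotonicity and a pointwise bound, not the triangle inequality). Finally, the reduction to finitely supported $\bx$ is harmless: if $\bx\in\ell_p$ then $\|\bx\|_{\ell_p}<\infty$, the rearrangement is well defined, and all sums above converge, so no truncation argument is actually needed.
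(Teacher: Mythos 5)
Your proof is correct. The paper does not prove this lemma at all --- it simply cites \cite[Lemma 7.8]{DTU18} --- and your rearrangement argument (bound $x_j^*\le (m+1)^{-1/p}\|\bx\|_{\ell_p}$ for $j>m$, then split $(x_j^*)^q=(x_j^*)^{q-p}(x_j^*)^p$ and sum) is exactly the classical proof of that cited result, valid for all $0<p<q\le\infty$ including the quasi-norm range; note that for the stated inequality you only need that the greedy choice yields error at most $\bigl(\sum_{j>m}(x_j^*)^q\bigr)^{1/q}$, so the optimality of the greedy approximant, while true, is not even required.
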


Apart from best $m$-term, i.e. nonlinear approximation, we need the best linear approximation with respect to a given subspace of trigonometric polynomials. 

 \begin{defi}[Error of best trigonometric approximation]\label{defi_taw}
Let \(M \in \N\).  { {The error of best trigonometric approximation of a function}}  {space}   {\( \cF \hookrightarrow L_\infty \)} is defined as
\be
E_{[-M,M]^d}(\cF)_{L_\infty} \hspace{-2pt}
 \coloneqq \hspace{-2pt}  {\sup_{\Vert f \Vert_{\cF} \leq 1} } 
  {E_{[-M,M]^d}\big(f\big)_{L_\infty}
 \coloneqq }\hspace{-2pt}  {\sup_{\Vert f \Vert_{\cF} \leq 1} }\inf_{g \in \mathcal{T}([-M,M]^d)} \Vert f-g \Vert_{L_\infty}\,.
\ee
\end{defi}

\subsection{Sampling widths}
Let us first introduce the (non-linear) sampling  {width}.  {It represents} the optimal worst-case error for  {recovery} using optimal ``standard information'' (point evaluations).
\begin{defi}[Non-linear sampling  {width}] \label{defi_nl_samp}
Let $\mathcal{F}$ be a  {(quasi-)}normed space of functions on $D$, where function evaluations are continuous, which is continuously embedded into a Banach space $Y$. Then the \(m\)-th (generally non-linear) sampling width is defined as
\begin{equation}  \label{defi_samp_eq}
\varrho_m(\mathcal{F})_Y \coloneqq \inf_{t_1, \ldots,  {t_{m}} \in D} \inf_{R\colon {\C^{m}} \to Y} \sup_{\|f\|_\mathcal{F} \leq 1} \big\Vert f - R \big(f(t_1), \ldots, f( {t_{m}})\big)\big\Vert_Y.
\end{equation}
\end{defi}
We also define  {its} linear counterpart.
\begin{defi}[Linear sampling  {width}] \label{defi_l_samp} Let $\mathcal{F}$ again be a  {(quasi-)}normed space of functions on $D$, where function evaluations are continuous, which is continuously embedded into a Banach space $Y$. Then the \(m\)-th linear sampling width is defined as
\begin{equation} 
\varrho^{\operatorname{lin}}_m(\mathcal{F})_Y \coloneqq \inf_{t_1, \ldots, t_{m} \in D} \inf_{\substack{R\colon\C^{m} \to Y \\ {\rm linear} }} \sup_{\|f\|_\mathcal{F} \leq 1} \big\Vert f - R \big(f(t_1), \dots, f(t_{m})\big)\big\Vert_Y.
\end{equation}
Note that \(R\colon\C^{m} \to Y\) is restricted to linear maps here. This is the only difference between the two definitions.
\end{defi}

 For $\bX = \{\bx^1,...,\bx^m\}$ with \(\bx^i \in \tor^d\) for $i=1,\dots,m$, we will write  \[ R_m(f; \bX) \coloneqq R\big(f(\bx^1), \ldots, f(\bx^m)  \big)\] for short.

Jahn, T. Ullrich and Voigtlaender have shown in \cite{JUV23}, based on compressed sensing techniques (basis pursuit denoising), that the sampling widths measured in \(L_2\) can be bounded  {from above} by a sum of the best \(m\)-term approximation  {width} and the  error of the best trigonometric approximation measured in \(L_\infty\).

This result was then refined  {to an instance optimal version for more general \(L_q\), $2\leq q\leq\infty$, spaces instead of only \(L_2\)} in Moeller, Pozharska and T.~Ullrich 
\cite{MPU25_1}. The oversampling needed for this result comes from the  {(}random{)} construction of a restricted isometry Fourier matrix, see \cite{Bour14}, \cite{BDJR21} and \cite{HaRe17}.

\begin{prop}[{\cite[Theorem 3]{MPU25_1}}]\label{samp_Lp}
For any \(M, n \in \N\) there exists a  {universal constant \(C\) and a} recovery operator \(R_m\) using $m$  {iid}  random samples $\bX = \{\bx^1,...,\bx^m\}$ such that uniformly for all \(f \in C(\tor^d)\) and \(2 \leq q \leq \infty\) it holds
\begin{equation} 
  \| f - R_{m}\big( f;\bX)\|_{L_q} \leq C n^{1/2-1/q}\left( \sigma_n\big(f\big)_{L_\infty} + E_{[-M,M]^d}\big(f\big)_{L_\infty}\right)\,,
\end{equation}
with high probability, where
\begin{equation} \label{m_vs_n}
   m \geq \lceil C nd \log^\ast(n)^2 \log^\ast(M)\rceil .
\end{equation}
\end{prop}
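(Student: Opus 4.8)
The plan is to combine compressed–sensing machinery — the restricted isometry property (RIP) of random partial Fourier matrices together with the stability of basis pursuit — with two classical Fourier–analytic tools: de la Vall\'ee Poussin filtering and the Hausdorff--Young inequality. Fix $M,n\in\N$, draw $\bx^1,\dots,\bx^m$ iid uniformly on $\tor^d$, and let $\Phi\in\C^{m\times N}$, $N:=\#(\Z\cap[-2M,2M]^d)=(4M+1)^d$, have entries $\Phi_{j,\bk}=m^{-1/2}\exp(2\pi\mathrm{i}\,\bk\bx^j)$; this is the random sampling matrix of the bounded orthonormal system $\{\exp(2\pi\mathrm{i}\,\bk\cdot)\}_\bk$. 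The first step is to invoke the RIP bound for such matrices (\cite{Bour14,BDJR21,HaRe17}): if $m\gtrsim \delta^{-2}s\log^\ast(s)^2\log^\ast(N)$ then with high probability $\delta_s(\Phi)\le\delta$. Since $\log^\ast(N)\asymp d\log^\ast(M)$, choosing $s\asymp n$ and $\delta$ a small fixed constant yields exactly $m\gtrsim nd\log^\ast(n)^2\log^\ast(M)$ and, on the corresponding high-probability event, $\delta_{4n}(\Phi)<\delta_0$ for the threshold $\delta_0$ needed below; fix such a realization of $\bX$.

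The next step is the reduction that produces the error $\sigma_n(f)_{L_\infty}$ on the right-hand side rather than the (larger) Wiener-type quantity a naive recovery argument would give. Let $V$ be the $d$-fold tensor product of univariate de la Vall\'ee Poussin kernels adapted to $[-M,M]$, so that the convolution operator $V\ast\,$ reproduces every frequency in $[-M,M]^d$, satisfies $\|V\ast\|_{L_\infty\to L_\infty}\le\|V\|_{L_1}\le 3^d$, and maps each trigonometric polynomial to one with no more nonzero Fourier coefficients and all frequencies in $[-2M,2M]^d$. For an \emph{arbitrary} $n$-term trigonometric polynomial $p$ set $p':=V\ast p\in\mathcal{T}([-2M,2M]^d)$; then $p'$ has at most $n$ nonzero coefficients and, since $V\ast\,$ reproduces a best $L_\infty$-approximant of $f$ from $\mathcal{T}([-M,M]^d)$,
\[ \|f-p'\|_{L_\infty}\le\|f-V\ast f\|_{L_\infty}+\|V\ast(f-p)\|_{L_\infty}\le (1+3^d)\,E_{[-M,M]^d}(f)_{L_\infty}+3^d\,\|f-p\|_{L_\infty}. \]
The crucial point is that the coefficient vector $c_{p'}\in\C^N$ is exactly $n$-sparse.

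Now I define the recovery operator and control the coefficient error. Given data $y=(f(\bx^1),\dots,f(\bx^m))$, let $\eta^\ast:=\min_{\|z\|_{\ell_0}\le n}\|\Phi z-m^{-1/2}y\|_{\ell_2}$ (a data-driven noise level, which makes $R_m$ parameter-free; alternatively one uses equality-constrained basis pursuit together with a quotient property of $\Phi$), let $z^\ast$ be an $n$-sparse minimizer, let $\hat c$ minimize $\|z\|_{\ell_1}$ subject to $\|\Phi z-m^{-1/2}y\|_{\ell_2}\le\eta^\ast$, and put $R_m(f):=$ the trigonometric polynomial with coefficient vector $\hat c$. For every $p$ as above, $c_{p'}$ is $n$-sparse, hence $\eta^\ast\le\|\Phi c_{p'}-m^{-1/2}y\|_{\ell_2}\le\|f-p'\|_{L_\infty}$. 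The RIP-based stability estimate for basis pursuit denoising applied to $z^\ast$ gives $\|\hat c-z^\ast\|_{\ell_1}\lesssim\sqrt n\,\eta^\ast$ and $\|\hat c-z^\ast\|_{\ell_2}\lesssim\eta^\ast$, while the lower RIP inequality applied to the $2n$-sparse vector $z^\ast-c_{p'}$ gives $\|z^\ast-c_{p'}\|_{\ell_2}\lesssim\eta^\ast+\|f-p'\|_{L_\infty}\lesssim\|f-p'\|_{L_\infty}$ and $\|z^\ast-c_{p'}\|_{\ell_1}\lesssim\sqrt n\,\|f-p'\|_{L_\infty}$; combining, $\|\hat c-c_{p'}\|_{\ell_2}\lesssim\|f-p'\|_{L_\infty}$ and $\|\hat c-c_{p'}\|_{\ell_1}\lesssim\sqrt n\,\|f-p'\|_{L_\infty}$.

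Finally I assemble the estimate. For $2\le q\le\infty$ with conjugate exponent $q'$, Hausdorff--Young on $\tor^d$ gives $\|R_m(f)-p'\|_{L_q}\le\|\hat c-c_{p'}\|_{\ell_{q'}}$, and log-convexity of the $\ell_r$-norms gives $\|\hat c-c_{p'}\|_{\ell_{q'}}\le\|\hat c-c_{p'}\|_{\ell_1}^{2/q'-1}\,\|\hat c-c_{p'}\|_{\ell_2}^{2-2/q'}\lesssim n^{1/2-1/q}\,\|f-p'\|_{L_\infty}$, using $1/q'-1/2=1/2-1/q$. Since $n^{1/2-1/q}\ge 1$, the triangle inequality together with $\|f-p'\|_{L_q}\le\|f-p'\|_{L_\infty}$ and the previous display yields $\|f-R_m(f)\|_{L_q}\lesssim n^{1/2-1/q}\,\|f-p'\|_{L_\infty}\lesssim n^{1/2-1/q}\big(E_{[-M,M]^d}(f)_{L_\infty}+\|f-p\|_{L_\infty}\big)$; taking the infimum over all $n$-term trigonometric polynomials $p$ turns $\|f-p\|_{L_\infty}$ into $\sigma_n(f)_{L_\infty}$ and proves the claim, the absolute constant depending on $d$ only through the de la Vall\'ee Poussin kernel (harmless in the applications). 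The main obstacle is the RIP input in the first step: this is a deep fact about random partial Fourier matrices, and its precise logarithmic cost — two $\log^\ast(n)$ factors and one $\log^\ast(N)\asymp d\log^\ast(M)$ factor — is exactly what forces the oversampling $m\geq\lceil Cnd\log^\ast(n)^2\log^\ast(M)\rceil$; a secondary delicate point is keeping $R_m$ genuinely parameter-free, handled above by the data-adapted level $\eta^\ast$.
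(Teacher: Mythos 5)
The paper itself does not prove Proposition \ref{samp_Lp}; it is quoted from \cite{MPU25_1, MPU25_2}, and only the refinement Theorem \ref{samp_A1} is proved here, along the same lines. Your reconstruction is essentially sound and follows the same strategy as those sources: RIP of the random Fourier matrix at sparsity $\asymp n$ (which is exactly what produces the oversampling \eqref{m_vs_n}), a de la Vall\'ee Poussin filter to convert an arbitrary best $n$-term approximant $p$ (whose frequencies may lie anywhere in $\zz^d$) into an $n$-sparse polynomial $p'$ supported in a fixed cube at the price of $E_{[-M,M]^d}(f)_{L_\infty}$ and $\|f-p\|_{L_\infty}$, an $\ell_1$-minimization decoder with RIP stability, and coefficient-domain estimates covering all $2\le q\le\infty$ simultaneously. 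The individual steps check out: $\eta^\ast\le\|f-p'\|_{L_\infty}$, BPDN stability applied to the $n$-sparse minimizer $z^\ast$, the lower RIP inequality for the $2n$-sparse vector $z^\ast-c_{p'}$, and Hausdorff--Young plus log-convexity of $\ell_{q'}$-norms in place of the endpoint interpolation between $q=2$ and $q=\infty$ used in \cite{MPU25_2} (equivalent in effect); your choice of basis pursuit with the data-driven level $\eta^\ast$, rather than {\tt rLasso}/{\tt OMP}, is a legitimate parameter-free decoder since the sampling width admits arbitrary maps of the data. The one genuine shortfall is the constant: the Proposition asserts a \emph{universal} $C$, whereas your tensor-product de la Vall\'ee Poussin kernel contributes $\|V\ast\|_{L_\infty\to L_\infty}\le 3^d$, so as written you prove a version with a $d$-dependent constant (which you acknowledge). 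The paper sidesteps this with the operator of Lemma \ref{prop_dlVP}, which satisfies $\|V_M\|_{L_\infty\to L_\infty}\le e$ at the cost of the larger spectrum $[-(2d+1)M,(2d+1)M]^d$; substituting it (and redoing the bookkeeping for the number of columns $N$, which only changes the logarithmic factor from $\log^\ast(M)$ to $\log^\ast(dM)$) would recover the dimension-free constant. For the fixed-$d$ asymptotic applications in this paper your version suffices, but the statement verbatim requires that adjustment.
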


We only need the existence of  {a recovery operator} \(R_m\) here,  {that is admissible in Definition \ref{defi_nl_samp}}. Indeed there are two different valid options that even fulfill this bound  {with high probability}, namely  {``square root Lasso''} {\tt (rLasso)}  given in \cite[Definition 1]{MPU25_1}. Here, we  will refine this result by using an even stronger norm, the  {\(\ca\)} norm,  {weighted with a factor of} $n^{-1/2}$. Some difficulties that arise when dealing with the \(L_\infty\) norm can be avoided in this setting, see Remark \ref{sqrtlog}. 

\begin{theorem}\label{samp_A1}
For any \(M, n \in \N\) there exists a  {universal constant \(C\) and a} recovery operator \(R_m\) using $m$  {iid} random samples $\bX = \{\bx^1,...,\bx^m\}$ such that uniformly for all \(f \in C(\tor^d)\) and \(2 \leq q \leq \infty\) it holds
\begin{equation} 
 \| f - R_{m}\big( f;\bX)\|_{L_q} \leq C n^{1/2-1/q}\left(n^{-1/2} \sigma_n\big(f\big)_{  {\ca} } + E_{[-M,M]^d}\big(f\big)_{L_\infty}\right)\, ,
\end{equation}
with high probability, where
\begin{equation}
   m \geq \lceil Cnd \log^\ast(n)^2 \log^\ast(M)\rceil.
\end{equation}
\end{theorem}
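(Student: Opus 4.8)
The plan is to reduce Theorem \ref{samp_A1} to Proposition \ref{samp_Lp} by a standard ``truncation'' argument: given \(f \in C(\tor^d)\) and \(n \in \N\), pick a near-best \(n\)-term trigonometric approximant \(s = \sum_{\bk \in \Lambda} \hat s(\bk)\exp(2\pi \mathrm{i}\bk\cdot)\) with \(\#\Lambda \le n\) realizing (up to a factor \(2\)) the value \(\sigma_n(f)_{\ca}\), and write \(f = (f - s) + s\). The key observation is that for a trigonometric polynomial with at most \(n\) active frequencies, the \(L_\infty\) norm is controlled by the \(\ca\) norm with no loss, \(\|g\|_{L_\infty} \le \|g\|_{\ca}\), and conversely a Cauchy--Schwarz estimate on the \(n\)-sparse support gives \(\|g\|_{\ca} \le \sqrt{n}\,\|g\|_{L_2}\). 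The factor \(n^{-1/2}\) multiplying \(\sigma_n(f)_{\ca}\) in the claimed bound is exactly what makes the \(\ca\)-term of the right order, so the scaling is forced by this sparsity constraint.

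The steps, in order: First I would fix \(M, n\) and invoke Proposition \ref{samp_Lp} to obtain the universal constant \(C\), the sample set \(\bX\), and the operator \(R_m\) valid for \emph{all} continuous functions, with \(m \geq \lceil Cnd \log^\ast(n)^2\log^\ast(M)\rceil\). Second, apply that bound not to \(f\) directly but exploit the decomposition \(f = (f-s)+s\): the error of best trigonometric approximation term splits, and one notes \(E_{[-M,M]^d}(f)_{L_\infty} \le E_{[-M,M]^d}(f-s)_{L_\infty} + \|s - g\|_{L_\infty}\) is not quite the route; instead, the cleaner path is to estimate \(\sigma_{n}(f)_{L_\infty}\), which appears in Proposition \ref{samp_Lp}, directly in terms of \(\sigma_n(f)_{\ca}\). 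Indeed, if \(s\) is \(n\)-sparse with \(\|f - s\|_{\ca} \le 2\sigma_n(f)_{\ca}\), then since \(s \in \Sigma_n\) we get \(\sigma_n(f)_{L_\infty} \le \|f - s\|_{L_\infty} \le \|f-s\|_{\ca} \le 2\sigma_n(f)_{\ca}\). But this only yields a bound with \(\sigma_n(f)_{\ca}\) unweighted, which is too strong to be true — so the correct move is to use \(\sigma_{n}(f)_{L_\infty}\) bounded via a \emph{coarser} sparsity: choose the near-best \(n\)-term \(\ca\)-approximant \(s\), then \(f - s\) need not be estimated in \(L_\infty\) by its full \(\ca\) norm; rather, plug \(f\) itself into \(R_m\) and split the \emph{output} error.

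More precisely, the decisive step is: apply Proposition \ref{samp_Lp} to \(f\), giving
\[
\|f - R_m(f;\bX)\|_{L_q} \le C n^{1/2 - 1/q}\bigl(\sigma_n(f)_{L_\infty} + E_{[-M,M]^d}(f)_{L_\infty}\bigr),
\]
and then replace \(\sigma_n(f)_{L_\infty}\) by \(\sigma_{n}(f)_{L_\infty} \le \|f - s\|_{L_\infty}\) where \(s\) is chosen \(n\)-sparse to be near-optimal; but to introduce the \(n^{-1/2}\) factor one instead compares at \emph{different levels}: set the CS recovery level to \(n\) but measure the tail in \(\ca\). Using the multiplicative/splitting structure, \(\sigma_{n}(f)_{L_\infty} \lesssim n^{-1/2}\sigma_{\lfloor n/2\rfloor}(f)_{\ca}\) fails in general, so the honest argument is: pick \(s\) with \(\le n\) terms and \(\|f-s\|_{\ca}\le 2\sigma_n(f)_{\ca}\), apply Proposition \ref{samp_Lp} to \(g := f - s \in C(\tor^d)\), obtaining an operator acting on \(g\); since \(R_m\) is linear-in-samples up to the nonlinear reconstruction and \(s\) is a fixed polynomial one can reconstruct \(f\) from samples of \(f\) by \(R_m(f - s(\bX);\bX) + s\), and then \(\sigma_n(g)_{L_\infty} \le \|g\|_{L_\infty} \le \|g\|_{\ca} \le 2\sigma_n(f)_{\ca}\). \textbf{The main obstacle} is reconciling the exponent: a genuinely \(n\)-sparse tail gives \(\|g\|_{L_\infty}\le\|g\|_{\ca}\) with \emph{no} \(n^{-1/2}\), so to land on \(n^{-1/2}\sigma_n(f)_{\ca}\) one must instead choose \(s\) to be \((n/2)\)-sparse while running the CS recovery at level \(n\); then the residual \(g = f-s\), when fed to Proposition \ref{samp_Lp} at level \(n\), contributes \(n^{1/2-1/q}\sigma_n(g)_{L_\infty}\), and here we bound \(\sigma_n(g)_{L_\infty}\) by taking the \(n\) largest Fourier coefficients of \(g\) and using Stechkin (Lemma \ref{stechkin}) together with \(\|g\|_{\ca}\le \sqrt{n}\,\|g\|_{L_2}\) on that part — wait, this still needs care. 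The clean resolution, which is what I would write out, is to observe directly that for \emph{any} \(h\in C(\tor^d)\) and any \(n\), \(\sigma_n(h)_{L_\infty}\le n^{-1/2}\|h\|_{\ca}\) is \emph{false}; instead the true inequality one should use is Proposition \ref{samp_Lp} applied to the residual after subtracting an \(n\)-term \(\ca\)-best approximant, where the residual's \(L_\infty\) best-\(n\)-term error is then bounded trivially by its \(\ca\) norm, and the factor \(n^{-1/2}\) comes not from sparsity of the tail but from the identity \(n^{1/2-1/q}\cdot\|g\|_{\ca} = n^{1/2-1/q}\cdot(n^{1/2}\cdot n^{-1/2}\|g\|_{\ca})\) — i.e. absorbing one power of \(n^{1/2}\) into the prefactor is not legitimate either. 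Hence the correct and only viable route, and the one I expect the authors take, is to redo the compressed-sensing argument of \cite{MPU25_2, MPU25_1} \emph{directly} with the \(\ca\)-norm as the a priori bound: the restricted isometry property of the random Fourier matrix already yields \(\ell_1\)-robust (hence \(\ca\)-robust) recovery, and the \(n^{-1/2}\) is precisely the \(\ell_2 \to \ell_1\) normalization factor \((\sigma_n(\cdot)_{\ell_1} \le \sqrt{n}\,\sigma_n(\cdot)_{\ell_2})\) that appears in the standard basis-pursuit-denoising error bound; so one simply tracks, in the proof of \cite[Theorem 3.7]{MPU25_2}, the \(\ell_1\)-tail term \(\sigma_n(\hat f)_{\ell_1}\) in place of the \(\sqrt n\,\sigma_n(\hat f)_{\ell_2}\) term, which is exactly \(\sigma_n(f)_{\ca}\) up to the \(n^{-1/2}\) scaling, and invokes Remark \ref{rem_krieg}(i) that the \(\ca\)-setting avoids the \(L_\infty\)-discretization subtleties. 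This bookkeeping — extracting the \(\ell_1\)-robustness bound from the RIP instead of its \(\ell_2\) consequence — is the crux, and everything else (the choice of \(R_m\) as {\tt rLasso} or {\tt OMP}, the oversampling count \(m\)) carries over verbatim from Proposition \ref{samp_Lp}.
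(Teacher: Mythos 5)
Your final paragraph does land on the paper's actual strategy: the proof is a re-run of the compressed-sensing argument of \cite[Theorem 3.7]{MPU25_2}, keeping the \(\ell_1\)-tail term in the decoder's error bound (this is what yields \(n^{-1/2}\sigma_n(f)_{\ca}\) next to the \(n^{1/2-1/q}\) prefactor) instead of passing to \(\sigma_n(f)_{L_\infty}\). Your diagnosis that no reduction to Proposition \ref{samp_Lp} by subtracting an \(n\)-term approximant can produce the \(n^{-1/2}\) factor is also correct, and it is to your credit that you flagged and discarded those dead ends yourself.

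As a proof, however, the proposal has a genuine gap: it never explains how the two quantities on the right-hand side actually enter the CS estimate, and this is exactly where the paper's one new ingredient sits. The decoder recovers the coefficient vector \(\bc\) of a polynomial supported in \([-D,D]^d\) from noisy samples of \(f\), and the robust recovery bound involves \(\sigma_n(\bc^\ast)_{\ell_1}\) for a \emph{proxy} \(f^\ast\in\mathcal{T}([-D,D]^d)\) together with the noise term \(m^{-1/2}\Vert f(\bX)-f^\ast(\bX)\Vert_{\ell_2}\). The paper chooses \(f^\ast=V_Mf\), the de la Vall\'ee Poussin mean of Lemma \ref{prop_dlVP}: property (iii) (coefficientwise domination, \(\vert a_\bk\vert\le 1\)) gives \(\sigma_n(\bc^\ast)_{\ell_1}\le\sigma_n(f)_{\ca}\), while (ii) and (iv) give \(\Vert f-f^\ast\Vert_{L_\infty}\le(1+e)E_{[-M,M]^d}(f)_{L_\infty}\), which controls the noise term. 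Writing \(\sigma_n(\hat f)_{\ell_1}\) for \(f\) itself, as you do, ignores that \(f\) is not such a polynomial, and an arbitrary near-best \(L_\infty\)-approximant from \(\mathcal{T}([-M,M]^d)\) would not permit comparing its \(\ell_1\)-tail with \(\sigma_n(f)_{\ca}\); appealing to Remark \ref{rem_krieg}(i) does not repair this, since replacing \(E_{[-M,M]^d}(f)_{L_\infty}\) by the projection error proves a different statement. Also, your parenthetical ``\(\ell_2\to\ell_1\) normalization'' \(\sigma_n(\cdot)_{\ell_1}\le\sqrt{n}\,\sigma_n(\cdot)_{\ell_2}\) is not a valid inequality and is not the mechanism; in the paper the \(n^{-1/2}\) is simply a rescaling of the \(\ell_1\)-tail against the \(\sqrt{n}\) noise amplification for \(q=\infty\), and appears natively in the decoder's \(\ell_2\)-bound for \(q=2\). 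Finally, your sketch only addresses (implicitly) \(q=\infty\): the paper handles \(q=2\) separately via Parseval and the \(\ell_2\)-robustness bound, and gets \(2<q<\infty\) by interpolation, which is how the stated bound is obtained uniformly in \(q\).
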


   {   
For the proof of Theorem \ref{samp_A1}, we will make use of the following specific version of the de la Vall\'{e}e Poussin operator \(V_M\) introduced in \cite{FT04} and already used for similar results in \cite{JUV23} and \cite{MSU24_1} 
\begin{equation}
V_M (f)({\bx}) =  \sum_{{\bk}\in \Z} \hat{f}({\bk}) \, v_{\bk}\exp(2 \pi \mathrm{i}  {\bk} {\bx}),
\end{equation}
with weights \( v_{\bk}= \prod_{j = 1}^d v_{k_j}\) satisfying 
\begin{equation}
v_{k_j} = 
\begin{cases}
1, & \vert k_j\vert \leq M , \\
\frac{(2d+1)M - \vert k_j\vert}{2dM} \,, & M <\vert k_j\vert \leq (2d+1)M , \\
0 , &\vert k_j\vert > (2d+1)M\,.
\end{cases}
\end{equation}

\begin{lem}\label{prop_dlVP}
 For \(D = M(2d+1)\), the de la Vall\'{e}e Poussin operator \(V_M\) has the following properties.
  \begin{itemize} 
      \item[(i)] \(V_M g \in \mathcal{T}([-D,D]^d)\) for all \(g \in L_1(\tor^d)\).
      \item[(ii)] \(V_M g = g\) for all \(g \in \mathcal{T}([-M,M])^{d}\).
      \item[(iii)] \(V_M g = \sum_{\bk \in \Z \cap [-D,D]^d} a_\bk \hat{g}(\bk) \exp(2\pi \mathrm{i} \bk \cdot)\) with some \(\vert a_\bk \vert \leq 1\) for \(\bk \in \Z \cap [-D,D]^d\). 
      \item[(iv)] \(\Vert V_M \Vert_{L_\infty \to L_\infty} \leq e\), see e.g. \cite[Proof of Theorem 3.1]{MSU24_1}.
  \end{itemize}  
\end{lem}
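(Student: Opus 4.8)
The plan is to verify (i)--(iii) by direct inspection of the one-dimensional weight \(v_{k_j}\), and to reduce (iv) to an \(L_1\)-estimate on a convolution kernel. First I would record the elementary fact that \(v_{k_j}\in[0,1]\) for every \(k_j\in\zz\): indeed \(v_{k_j}=1\) for \(|k_j|\le M\), while on \(M<|k_j|\le(2d+1)M\) the value \(\frac{(2d+1)M-|k_j|}{2dM}\) decreases monotonically from \(1\) to \(0\), and \(v_{k_j}=0\) for \(|k_j|>(2d+1)M=D\). Since \(v_{\bk}=\prod_{j=1}^d v_{k_j}\), it follows that \(v_{\bk}=0\) as soon as one coordinate satisfies \(|k_j|>D\), which confines the frequency support to \(\Z\cap[-D,D]^d\) and hence gives (i). For (ii), if \(g\in\mathcal{T}([-M,M]^d)\) then \(\hat g(\bk)=0\) unless \(|k_j|\le M\) for all \(j\), and on that range every factor \(v_{k_j}=1\), so \(v_{\bk}=1\) and \(V_M g=g\). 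For (iii) one simply sets \(a_\bk:=v_\bk\); the bound \(|a_\bk|\le1\) is immediate from \(v_{\bk}\in[0,1]\), and the support statement is (i).

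The main work is (iv). I would realize \(V_M\) as convolution, \(V_M g = g * K_M\), where \(K_M(\bx)=\prod_{j=1}^d k_M(x_j)\) is the tensor product of the one-dimensional kernel \(k_M\) whose Fourier coefficients are the weights \(v_k\). By Young's inequality the operator norm satisfies \(\|V_M\|_{L_\infty\to L_\infty}\le \|K_M\|_{L_1(\tor^d)} = \|k_M\|_{L_1(\tor)}^d\), so it suffices to bound the \(L_1\)-norm of the single-variable kernel. The key observation is that the trapezoidal weight decomposes as a difference of two Fejér weights: writing \(N:=(2d+1)M=D\) and letting \(F_n\) denote the (nonnegative, \(L_1\)-normalized) Fejér kernel with \(\hat F_n(k)=(1-|k|/(n+1))_+\), one checks the identity
\[ v_k = \frac{1}{N-M}\Big[(N-|k|)_+ - (M-|k|)_+\Big] = \frac{1}{N-M}\Big[N\,\hat F_{N-1}(k) - M\,\hat F_{M-1}(k)\Big] \]
for all \(k\in\zz\). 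Hence \(k_M = \frac{1}{N-M}\big(N F_{N-1} - M F_{M-1}\big)\).

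Finishing from here is routine: by the triangle inequality and \(\|F_n\|_{L_1}=1\),
\[ \|k_M\|_{L_1(\tor)} \le \frac{N+M}{N-M} = \frac{(2d+2)M}{2dM} = 1+\frac{1}{d}, \]
so that \(\|V_M\|_{L_\infty\to L_\infty}\le \big(1+\tfrac{1}{d}\big)^d \le e\), using that \((1+1/d)^d\) increases to \(e\). I expect step (iv)---specifically producing the exact constant \(e\)---to be the only nontrivial point. It hinges both on the Fejér decomposition, which turns the slope \(1/(2d)\) of the trapezoidal ramp into the clean ratio \((N+M)/(N-M)\), and on the tensor structure, which is precisely what converts the per-coordinate factor \(1+1/d\) into the dimension-free bound \(e\). (Parts (i)--(iii), by contrast, are purely combinatorial consequences of the shape of \(v_{k_j}\).)
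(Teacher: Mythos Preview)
Your proposal is correct. The paper does not actually supply a proof of this lemma: properties (i)--(iii) are treated as immediate from the explicit form of the weight \(v_{k_j}\), and (iv) is simply cited from \cite[Theorem~3.1]{MSU24_1}. Your argument for (iv)---writing the trapezoidal multiplier as \(\frac{1}{N-M}\big(N\hat F_{N-1}-M\hat F_{M-1}\big)\), bounding \(\|k_M\|_{L_1}\le (N+M)/(N-M)=1+1/d\), and tensorizing to \((1+1/d)^d\le e\)---is precisely the standard route to this constant and is presumably what the cited reference contains, so there is nothing to compare.
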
}

\begin{proof}[Proof of Theorem \ref{samp_A1}]
    We follow the proof of \cite[Theorem 3]{MPU25_1} and show only the cases \(q = 2\) and \(q=\infty\), all other cases then follow by interpolation.

    Identify \(\{-D, \ldots, D \}^d = \{\bk_1, \ldots, \bk_N \}\), this implies \(N = (2D+1)^d\). We start as in \cite[Theorem 3]{MPU25_1} with \(q= \infty\) but make a different choice for \(f^\ast\), namely \(f^\ast = V_M(f)\). Let now \(\bc^\ast\) be the coefficient vector of \(f^\ast\), and \(\bc\) the coefficient vector of \(R_m(f,\bX)\) (due to the construction of \(R_m\) {and (i) from Lemma \ref{prop_dlVP}} we have \(\supp \bc  \subset  {[-D,D]}^d\)). This yields for the recovery operator \(R_m\) from \cite[Definition 1]{MPU25_1} with absolute constants \(\beta, \delta \geq 1\) that
\begin{align} \label{boot}
    \begin{split}
        \| f^\ast - R_{m} \big( f;\boldsymbol{{\rm X}}) \|_{L_\infty} & \leq \sum_{j=1}^{N} \big\vert{ {c^\ast _{j}} -  {c _{j}}} \big\vert \| {\exp(2\pi \mathrm{i} \bk_j \cdot)}\|_{L_\infty} \leq  \|\bc^\ast - \bc  \|_{\ell_1} \\
        & \leq  { \beta \sigma_n(\bc^\ast)_{\ell_1} + \delta \cdot  \sqrt{ n} \frac{1}{\sqrt{m}} \Vert (V_M f)(\bX) - f(\bX)\Vert_{\ell_2}}\\
         & \leq  { \beta \sigma_n(\bc^\ast)_{\ell_1} + \delta \cdot  \sqrt{ n} \Vert (V_M f)- f\Vert_{L_\infty} }\\
        & = \beta \sigma_n(\bc^\ast)_{\ell_1} + \delta \cdot  \sqrt{ n} \, \|f - f^\ast\|_{L_\infty},\\
    \end{split}
\end{align}
 with high probability and therefore the existence of  {such a} recovery operator  {\(R_m\)} that satisfies this bound. 
 {Crucially, we need to estimate \(\|f - f^\ast\|_{L_\infty}\) from above. We do so by introducing 
\[g = \argmin_{g \in \mathcal{T}([-M,M])^{d}} \Vert f - g \Vert_{L_\infty},\]
and applying the triangle inequality,
\begin{align}
    \begin{split}
        \|f - f^\ast\|_{L_\infty} &\leq  \|f - g\|_{L_\infty} + \|g - f^\ast\|_{L_\infty} \\
        & = E_{[-M,M]^d}\big(f\big)_{L_\infty} + \|V_M(g) - V_M(f)\|_{L_\infty} \\
        & \leq E_{[-M,M]^d}\big(f\big)_{L_\infty} + e \Vert g - f\Vert_{L_\infty} \\
        &= (1+e) E_{[-M,M]^d}\big(f\big)_{L_\infty},
    \end{split}
\end{align} 
  where we used Lemma \ref{prop_dlVP} (ii) and (iv).}
Now simply consider  {that by Lemma \ref{prop_dlVP} (iii), it holds \( \sigma_n(\bc^\ast)_{\ell_1} \leq \sigma_n(f)_{\ca}\) and therefore} 
\begin{align} \label{downtoto}
    \begin{split}
        \| f - R_{m}\big( f;\boldsymbol{{\rm X}})\|_{L_\infty} & \leq \Vert f - f^\ast \Vert_{L_\infty} + \Vert f^\ast -  R_{m}\big( f;\boldsymbol{{\rm X}}) \Vert_{L_\infty} \\
        &\leq \beta \sigma_n(\bc^\ast)_{\ell_1} + (\delta \cdot \sqrt{ n}+1)\|f - f^\ast\|_{L_\infty} \\
        & \leq C \sqrt{n} \Big( n^{-1/2} \,  \sigma_n\big(f\big)_{\ca} + E_{[-M,M]^d}\big(f\big)_{L_\infty} \Big).
    \end{split}
\end{align}

    This concludes the regime where \(q =\infty\). Let us turn to the situation \(q = 2\). Here we can apply  {the Parseval} identity and get, after applying the second part of \cite[Theorem 2]{MPU25_1} rather than the first one,
\begin{align} \label{boot_L_2}
    \begin{split}
     \| f - R_{m}\big( f;\boldsymbol{{\rm X}})\|_{L_2} & \leq \Vert f^\ast -  R_{m}\big( f;\boldsymbol{{\rm X}}) \Vert_{L_2} + \Vert f -f^\ast \Vert_{L_\infty} \\
     & = \|\bc^\ast - \bc  \|_{\ell_2} + \Vert f -f^\ast \Vert_{L_\infty} \\
    &\leq \beta n^{-1/2}\sigma_n(\bc^\ast)_{\ell_1} + (\delta+1) \cdot \|f - f^\ast\|_{L_\infty}\\
    & \leq C \Big( n^{-1/2} \sigma_n\big(f\big)_{\ca} + E_{[-M,M]^d}\big(f\big)_{L_\infty} \Big).
    \end{split}
\end{align}
After using an interpolation argument between \eqref{downtoto} and \eqref{boot_L_2} we now get the desired bound.
\end{proof}

\begin{rem}\label{rem_krieg}
{\em (i)}  { In this paper we only deal with the trigonometric system. However, it is possible to generalize Theorem \ref{samp_A1} to general bounded orthonormal systems. This is possible by combining results from \cite{BDJR21} with results like \cite[Theorem 2]{MPU25_1}. We may avoid the de la Vall\'{e}e Poussin operator by changing from \(E_{[-M,M]^d}(f)_{L_\infty}\) on the right hand side to the error of the projection $\|f-P_{[-M,M]^d}f\|_{L_\infty}$.}

{\rm (ii)} Theorem \ref{samp_A1} is similar to the results shown by Krieg in \cite{Kri23}. Here we have instance optimality in our result, i.e.,  it holds for every individual function \(f\), not only for the supremum over the entire function class.
\end{rem}

\section{Best $m$-term approximation widths of weighted Wiener  {spaces} in $L_\infty$ and $\mathcal{A}$}
In Theorems 4.4 and 4.5 of our recent paper \cite{MSU24_1} we showed bounds on the best \(m\)-term approximation width for unweighted Wiener spaces, both in \(L_q\), \(2 \leq q < \infty\), and \(L_\infty\). Similar but slightly worse results of this type can also be found in \cite{Tem15}. 

We now want to use these results to show a new bound for our weighted Wiener spaces by discretizing these existing results onto layers of the step hyperbolic cross and treating all weights there as equal. This will allow to improve on the results {\cite[Theorem 4.2, 4.5]{Mo23}} and the much older one in \cite[Theorem 6.1]{DeTe95},  {by giving more general (in terms of \(\theta\)) and (almost precise) bounds}.

\begin{theorem} \label{sigma_m_A_0<p<infty,q_bigger=_2}
If  $0<\theta\leq\infty$ and $r>(1-1/\theta)_+$, then it holds for \(2 \leq q < \infty\) 
 (or $2\leq q\leq\infty$ for $d=1$)
 $$ \sigma_m(S^{r}_{\theta}\mathcal{A})_{L_q}
 \asymp m^{-(r+{\frac{1}{\theta}-\frac{1}{2}})} \log^\ast(m)^{(d-1)r},
 $$
 and for \(q = \infty\), \(d > 1\)
 $$
 m^{-(r+{\frac{1}{\theta}-\frac{1}{2}})} \log^\ast(m)^{(d-1)r}
 \lesssim \sigma_m(S^{r}_{\theta}\mathcal{A})_{L_\infty}
 \lesssim m^{-(r+{\frac{1}{\theta}-\frac{1}{2}})} \log^\ast(m)^{(d-1)r+\frac{1}{2}} .
 $$
\end{theorem}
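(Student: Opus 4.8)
The lower bound is the easy half, so I would do it first. Choose $N\in\N$ with $2^{N}N^{d-1}\asymp 2m$, so that $N\asymp\log^\ast m$, and let $Q\subset\Z$ be the $N$-th layer of the step hyperbolic cross, i.e.\ the set of $\bk$ with $\sum_{i}\lceil\log_2(1+|k_i|)\rceil=N$; then $\#Q\asymp 2^{N}N^{d-1}$ and $\prod_{i}(1+|k_i|)^{r}\asymp 2^{rN}$ on $Q$. Put $g=\sum_{\bk\in Q}e^{2\pi\mathrm{i}\bk\cdot}$ and $\tilde g=g/\|g\|_{S^{r}_{\theta}\ca}$, so that $\|\tilde g\|_{S^{r}_{\theta}\ca}=1$ and every Fourier coefficient of $\tilde g$ has modulus $\asymp 2^{-rN}(\#Q)^{-1/\theta}$. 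Since $\|\cdot\|_{L_q}\ge\|\cdot\|_{L_2}$ for $q\ge 2$, Parseval together with the trivial value of the best $m$-term approximation of a flat vector in $\ell_2$ gives
\[
\sigma_m(\tilde g)_{L_q}\ \ge\ \sigma_m(\tilde g)_{L_2}\ =\ \sigma_m\big((\hat{\tilde g}(\bk))_{\bk}\big)_{\ell_2}\ \asymp\ (\#Q-m)^{1/2}\,2^{-rN}(\#Q)^{-1/\theta}\ \asymp\ 2^{-rN}m^{1/2-1/\theta},
\]
which equals $m^{-(r+1/\theta-1/2)}(\log^\ast m)^{(d-1)r}$ by the choice of $N$; this is the claimed lower bound in all regimes.

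For the upper bound I would carry out the decomposition announced in the introduction. For $\mathbf{s}\in\nd$ let $\rho(\mathbf{s})=\{\bk\in\Z:\lfloor 2^{s_j-1}\rfloor\le|k_j|<2^{s_j}\text{ for all }j\}$, let $\delta_{\mathbf{s}}f$ be the Fourier projection onto $\rho(\mathbf{s})$, and write $f^{(n)}=\sum_{|\mathbf{s}|_1=n}\delta_{\mathbf{s}}f$, $|\mathbf{s}|_1:=s_1+\dots+s_d$, for the $n$-th layer; then $\#\supp\widehat{f^{(n)}}\asymp 2^{n}n^{d-1}$ and $\prod_i(1+|k_i|)^{r}\asymp 2^{rn}$ on $\supp\widehat{f^{(n)}}$. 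Hence $\|f\|_{S^{r}_{\theta}\ca}\asymp\big\|\,(2^{rn}\|f^{(n)}\|_{\ca_\theta})_{n\ge 0}\,\big\|_{\ell_\theta}$ (usual modification for $\theta=\infty$), so that for $\|f\|_{S^{r}_{\theta}\ca}\le 1$ one may write $\|f^{(n)}\|_{\ca_\theta}=2^{-rn}c_n$ with $\|(c_n)\|_{\ell_\theta}\lesssim 1$. Next I would feed each layer into the unweighted best $m$-term bounds \cite[Theorems 4.4, 4.5]{MSU24_1}: approximating $f^{(n)}$ by $\ell_n$ trigonometric terms costs at most $\lesssim \ell_n^{1/2-1/\theta}\,2^{-rn}c_n$ for $2\le q<\infty$ (with an additional support-size factor when $\theta>1$, and an extra $\sqrt{\log^\ast(\cdot)}$ factor when $q=\infty$), the cost being $0$ once $\ell_n$ exceeds $\#\supp\widehat{f^{(n)}}$. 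Then split the budget: reproduce exactly all layers $n\le N_0$, where $2^{N_0}N_0^{d-1}\asymp m/2$ so that $N_0\asymp\log^\ast m$ and the cost is $\lesssim m/2$; distribute the remaining $\lesssim m/2$ terms over $n>N_0$ so that $\ell_n$ decays geometrically in $n$. Summing the per-layer errors by the triangle inequality in $L_q$ and by Hölder's inequality in $n$ against $\|(c_n)\|_{\ell_\theta}\lesssim 1$ turns the bound into a convergent geometric series dominated by its first term at $n\approx N_0$, which is $\asymp 2^{-rN_0}m^{1/2-1/\theta}\asymp m^{-(r+1/\theta-1/2)}(\log^\ast m)^{(d-1)r}$; in the case $q=\infty$ the extra $\sqrt{\log^\ast(\cdot)}$ factors accumulate to one further $\sqrt{\log^\ast m}$, yielding the stated upper estimate with exponent $(d-1)r+\tfrac12$ for $d>1$, while the sharper unweighted $L_\infty$-estimate of \cite{MSU24_1} removes it for $d=1$.

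The main obstacle is precisely this last optimisation: one must choose the layerwise budgets $(\ell_n)_{n>N_0}$ so that simultaneously $\sum_n\ell_n\lesssim m$ and the resulting error series converges at the exact rate $m^{-(r+1/\theta-1/2)}$ — neither the weaker $m^{-r}$ one obtains by simply discarding the high layers, nor the $m^{-1/2}$ one obtains by over-investing there. It is at this point that the hypothesis $r>(1-1/\theta)_+$ enters (it forces convergence of the relevant series, and in particular yields $S^{r}_{\theta}\ca\hookrightarrow\ca\hookrightarrow L_\infty$, so that all the quantities are finite), and it is here that the case distinction $\theta\le 1$ versus $\theta>1$ in the unweighted estimates, as well as the logarithmic loss for $q=\infty$, have to be handled carefully.
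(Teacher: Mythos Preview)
Your proposal is correct and follows essentially the same route as the paper: the same fooling function on a single hyperbolic layer for the lower bound, and the same layerwise decomposition fed into \cite[Theorems 4.4, 4.5]{MSU24_1} for the upper bound, with exact reproduction of the low layers and a decaying budget on the high ones. The paper's implementation differs only in bookkeeping --- it uses the concrete polynomially decaying budget $m_k=(k-L)^{-2}2^{L}L^{d-1}$ and an explicit cutoff $K$ separating an approximated range $S_2$ from a pure tail $S_3$, bounds each $c_n\le 1$ rather than H\"older-ing against $\|(c_n)\|_{\ell_\theta}$, and handles $\theta<1$ a posteriori via the multiplicativity $\sigma_{2m}(S^r_\theta\ca)_{L_q}\le\sigma_m(S^r_\theta\ca)_{S^r_1\ca}\cdot\sigma_m(S^r_1\ca)_{L_q}$ together with Stechkin, rather than building it into the per-layer estimate.
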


The proof will use two auxiliary Lemmas which we state here.

\begin{lem} \label{geo_sum}
    For arbitrary \(\alpha \in \re \), \( \beta,\gamma > 0\) and \(L \in \N\) it holds
    \begin{equation}
        \sum_{k = L+1}^\infty (k-L)^\alpha k^\beta 2^{-\gamma k} \lesssim L^{\beta} 2^{- \gamma L} .
    \end{equation}
\end{lem}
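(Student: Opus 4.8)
The plan is to remove the \(L\)-dependence from the summation index by the shift \(j = k - L\). Writing \(k = j + L\) with \(j \geq 1\), the sum factors as
\[
\sum_{k=L+1}^\infty (k-L)^\alpha k^\beta 2^{-\gamma k} = 2^{-\gamma L}\sum_{j=1}^\infty j^\alpha (j+L)^\beta 2^{-\gamma j},
\]
so after pulling out \(2^{-\gamma L}\) it suffices to show \(\sum_{j=1}^\infty j^\alpha (j+L)^\beta 2^{-\gamma j} \lesssim L^\beta\) with an implied constant depending only on \(\alpha,\beta,\gamma\) and, crucially, \emph{not} on \(L \geq 1\).

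Next I would split into two cases according to the sign of \(\beta\). If \(\beta < 0\), then \(j+L \geq L\) forces \((j+L)^\beta \leq L^\beta\), leaving \(L^\beta \sum_{j\geq 1} j^\alpha 2^{-\gamma j}\), and the remaining series is a finite constant since exponential decay beats polynomial growth. If \(\beta \geq 0\), I would use the elementary bound \((j+L)^\beta \lesssim_\beta j^\beta + L^\beta\) (subadditivity for \(0\leq\beta\leq 1\), convexity for \(\beta\geq 1\)) to estimate the sum by \(\sum_{j\geq 1} j^{\alpha+\beta} 2^{-\gamma j} + L^\beta \sum_{j\geq 1} j^\alpha 2^{-\gamma j}\); both series converge to constants, and since \(L\geq 1\) and \(\beta \geq 0\) we have \(L^\beta \geq 1\), so the first term is also \(\lesssim L^\beta\). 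Combining the cases and multiplying back by \(2^{-\gamma L}\) gives the claim.

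There is no genuine obstacle here; the only point requiring a little care is the bookkeeping of the implied constants, which must be seen to depend on \(\alpha,\beta,\gamma\) alone — this is exactly what the later applications need, where those parameters are fixed and \(L\) ranges over the dyadic levels of the step hyperbolic cross. The conceptual content is simply that the factor \((k-L)^\alpha\) is harmless against the geometric decay, and the factor \(k^\beta\) costs at most \(L^\beta\) up to constants.
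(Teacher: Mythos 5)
Your proof is correct, and it is essentially the paper's argument: both shift the summation index so as to pull out the factor \(2^{-\gamma L}\), and then use that the geometric decay \(2^{-\gamma j}\) absorbs all remaining polynomial factors, leaving at most \(L^{\beta}\) from the \(k^{\beta}\) term, with an implied constant depending only on \(\alpha,\beta,\gamma\). The only difference is in the bookkeeping for \(k^{\beta}\): the paper uses the single inequality \(k\le 2L(k-L)\) to absorb \(k^{\beta}\) into \((k-L)^{\alpha+\beta}(2L)^{\beta}\), whereas you split according to the sign of \(\beta\), using \((j+L)^{\beta}\le L^{\beta}\) for \(\beta<0\) and \((j+L)^{\beta}\lesssim j^{\beta}+L^{\beta}\) for \(\beta\ge 0\). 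Your case distinction is in fact slightly more careful, since the paper's term-wise bound \(k^{\beta}\le (2L)^{\beta}(k-L)^{\beta}\) uses monotonicity of \(x\mapsto x^{\beta}\) and hence is only valid for \(\beta\ge 0\); for \(\beta<0\) one argues exactly as you do.
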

\begin{proof}
    Since \(L \geq 1\) we have for all \(k \geq L +1\) that \(k \geq L+1 \geq L \frac{2L}{2L-1}\). This implies \(k \leq 2L (k - L)\). From that we now get
\begin{align}
    \begin{split}
         \sum_{k = L+1}^\infty (k-L)^\alpha k^\beta 2^{-\gamma k} & \leq \sum_{k = L+1}^\infty (k-L)^{\alpha+ \beta} (2L)^\beta 2^{-\gamma k} \\
         & = (2L)^\beta 2^{-\gamma L} \sum_{k = L+1}^\infty (k-L)^{\alpha+ \beta} 2^{-\gamma (k-L)} \\
         & = (2L)^\beta 2^{-\gamma L} \sum_{n = 1}^\infty n^{\alpha+ \beta} 2^{-\gamma n} \\
         & \lesssim L^\beta 2^{-\gamma L}.
    \end{split}
\end{align}
\end{proof} 

This second Lemma is an easy consequence of the Hölder inequality.  

\begin{lem} \label{CSI}
    Let \(\bx \in \ell_p\) and \(q \geq p\). Then it holds
    \begin{equation}
        \Vert \bx \Vert_{\ell_p} \leq \Vert \bx \Vert_{\ell_q} \Vert \bx \Vert_{\ell_0}^{\frac{q-p}{q p}} .
    \end{equation}
\end{lem}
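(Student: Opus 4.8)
The plan is to reduce the claim to a single application of Hölder's inequality on the (possibly finite) support of $\bx$. First I would set $S \coloneqq \supp \bx$ and $N \coloneqq \Vert \bx \Vert_{\ell_0} = \# S$, and dispose of the degenerate cases: if $p = q$ the statement reads $\Vert \bx \Vert_{\ell_p} \le \Vert \bx \Vert_{\ell_p}$ (using $N^{0}=1$); if $q > p$ and $N = \infty$ the right-hand side is $+\infty$ and there is nothing to prove; and if $q = \infty$ the asserted bound becomes $\Vert \bx \Vert_{\ell_p} \le \Vert \bx \Vert_{\ell_\infty}\, N^{1/p}$, which is immediate from $|x_k| \le \Vert \bx \Vert_{\ell_\infty}$ for $k \in S$ and summing over the $N$ indices in $S$. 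So it remains to treat $p < q < \infty$ with $N < \infty$.

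The key step is to write $\Vert \bx \Vert_{\ell_p}^p = \sum_{k \in S} |x_k|^p \cdot 1$ and apply Hölder's inequality with the conjugate exponents $q/p$ and $q/(q-p)$ (note $\tfrac{p}{q} + \tfrac{q-p}{q} = 1$):
\begin{equation}
\sum_{k \in S} |x_k|^p \;\le\; \Big(\sum_{k \in S} |x_k|^{q}\Big)^{p/q}\Big(\sum_{k \in S} 1\Big)^{(q-p)/q} \;=\; \Vert \bx \Vert_{\ell_q}^{\,p}\, N^{(q-p)/q}.
\end{equation}
Taking $p$-th roots gives $\Vert \bx \Vert_{\ell_p} \le \Vert \bx \Vert_{\ell_q}\, N^{(q-p)/(pq)}$, which is exactly the assertion.

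There is essentially no obstacle; the only care needed is the bookkeeping of the conjugate exponents and the degenerate cases above, which is why I would dispatch those first before invoking Hölder.
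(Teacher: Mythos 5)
Your proposal is correct and follows essentially the same route as the paper: writing $\Vert \bx \Vert_{\ell_p}^p = \sum_{k \in \supp \bx} |x_k|^p \cdot 1$ and applying H\"older's inequality with exponents $q/p$ and $q/(q-p)$. The extra care you take with the degenerate cases ($p=q$, $q=\infty$, infinite support) is fine but not needed beyond the paper's argument.
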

\begin{proof}
    We simply apply the Hölder inequality with \(\tau:=q/p \geq 1\) to \(\vert x_k\vert^p\) and \(1\). This yields,
    \begin{align}
        \begin{split}
            \Vert \bx \Vert_{\ell_p} & = \bigg( \sum_{k=1}^\infty \vert x_k \vert^p \, \mathbbm{1}_{k \in \supp \bx} \bigg)^{1/p}\\
            & \leq \Bigg[ \bigg( \sum_{k=1}^\infty ( \vert x_k \vert^p)^{\frac{q}{p}} \bigg)^{\frac{p}{q}} \bigg(\sum_{k=1}^\infty \mathbbm{1}_{k \in \supp \bx}^{\frac{q}{q-p}} \bigg)^{\frac{q-p}{q}} \Bigg]^{\frac{1}{p}}\\
            & = \bigg( \sum_{k=1}^\infty \vert x_k \vert^{q}\bigg)^{\frac{1}{q}} \Vert \bx \Vert_{\ell_0}^{\frac{q-p}{qp}},\\
        \end{split}  
    \end{align}
    where we used the usual notation \(\Vert \bx \Vert_{\ell_0} \coloneqq \# \supp{\bx}\).
\end{proof}

\begin{proof}[Proof of Theorem \ref{sigma_m_A_0<p<infty,q_bigger=_2}]
Let \( 2 \leq n \in \N\) and \(d\geq 2\), the case \(d=1\) is simpler in some regards and not covered here in detail (see \cite[Theorem 4.2]{Mo23} for that). 

Let \( f \in S^{r}_{\theta}\mathcal{A} \) with $\|f\|_{S^{r}_{\theta}\mathcal{A}}\le 1$.
We can decompose \(f\) into 
\be \label{firstsum}
f(\bx) = \sum_{j=0}^\infty f_j(\bx),
\ee
 \noindent
 where 
 \begin{equation}\label{segment}
 f_j(\bx)= \sum_{\bk \in\hyp_j} \hat{f}(\bk) \exp(2\pi \mathrm{i} \bk\bx).
  \end{equation}
  Here, the uniform step hyperbolic layer \(\hyp_j\) is defined as follows. 
  Starting from the segmentation of \(\zz = \bigcup_{i=0}^\infty I_i\) 
with $I_0 = \{0\}$ and, for $i\in \mathbb{N}$,
 $$
 I_i = \big\{k \in \zz ~:~  2^{i-1} \leq \vert k\vert < 2^{i} \big\}\,.
 $$
This gives \(\#I_i = 2^{i}\).
For \( \bk \in \nd \) we write
\be \label{Ik}
    I_{\bk} = I_{k_1} \times \ldots \times I_{k_d}.
\ee
  {Each} of these cuboid clusters contains 
 \be \label{Vol_Ik}
 \#I_{\bk} = 2^{ {\Vert\bk \Vert_{\ell_1}}}  
 \ee
points. 
This partitions \(\Z\) into the above cuboid clusters (\ref{Ik}), that is \(\Z = \bigcup_{\bk \in \nd} I_{\bk} \).
We can now group blocks on the same uniform step hyperbolic layer together 
\begin{equation}\label{square}
 \hyp_j \coloneqq \bigcup_{\bk \in \nd, \, 
  {\Vert \bk \Vert_{\ell_1}} = j}  I_{\bk} ,
\end{equation}
where the contents of every \(I_{\bk}\) appear in exactly one \(\hyp_j, \, j \in \N_0\), meaning \(\Z = \bigcup_{j=0}^\infty \hyp_j\).  {For each \(\bk \in I_\bn\) this construction implies that \(\prod_{l = 1}^d (1 + \vert k_l\vert) \asymp 2^{\Vert \bn \Vert_{\ell_1}}\). And therefore also for \(\bk \in \hyp_j\) that \(\prod_{l = 1}^d (1 + \vert k_l\vert) \asymp 2^j\).}

 {Now we construct \(P = \sum_{k=0}^\infty P_k\), with \(\supp P_k \subset \hyp_k\) such that \(P \in \Sigma_{cm}\) for some constant c}.  {To estimate the error from above we segment the approximation of \(f\) by \(P\)} 
\begin{equation} \label{s}
 \|f - P\|_{L_q} \leq \sum_{k= 0}^L \|f_k - P_k\|_{L_q}  + \! \sum_{k=L+1}^K\|f_k - P_k\|_{L_q} + \!\sum_{k=K+1}^\infty \|f_k\|_{L_q} \eqqcolon S_1 + S_2 + S_3,
\end{equation}
 {with \(L\), \(K\) and  \(P_k\) chosen below}.
To continue, we need to know how many points are in each uniform step hyperbolic layer \(\hyp_j\). First, consider that \(\hyp_j\) can be decomposed into a number of blocks \(I_{\bk}\) containing \(2^{ {\Vert\bk \Vert_{\ell_1}}} = 2^j\) points each. Since these sets are products of dyadic intervals  {we can count them by considering} the number of possibilities to distribute \(j \in \N_0\) to \(d\) different dimensions. In total, we have
\be\label{C_j}
C_j \coloneqq \# \hyp_j = 2^j \binom{j+d-1}{j} \asymp 2^j j^{d-1}.
\ee
\noindent 
Choose now  {for sufficiently large \(n\)}
\begin{equation}\label{L}
L= \big\lceil n - (d-1) \log^\ast (n) \big\rceil > 0
\end{equation}
and
\begin{equation}\label{K}
K =\Big\lceil n\, \frac{r+{\frac{1}{\theta}-\frac{1}{2}}}{r -1 + \frac{1}{\theta}} - (d-1) \log^\ast (n) \Big\rceil.
\end{equation} 
For \(k=0, \dots, L\) we choose \(P_k = f_k\), then obviously \(S_1 = 0\). For \( k = L+1, \ldots , K\) we instead choose trigonometric polynomials \(P_k\) with 
 \begin{equation}\label{m_k}
m_k = \lceil (k-L)^{-2}2^{L} L^{d-1} \rceil
 \end{equation} 
many frequencies. 
Finally, for $k \geq  K+1$, we choose $P_k:=0$. 
Then in total \(P \coloneqq \sum_{k=0}^KP_k\) is a linear combination of at most
\begin{align} \label{ms}
\begin{split}
\sum_{k= 0}^L C_k + \sum_{k=L+1}^Km_k & \lesssim \sum_{k= 0}^L 2^k k^{d-1} + \sum_{k=L+1}^K \lceil (k-L)^{-2} 2^{L} L^{d-1} \rceil \\
& \leq L^{d-1}\sum_{k= 0}^L 2^k + \sum_{k=L+1}^K 1+ (k-L)^{-2} 2^{L} L^{d-1} \\
& \lesssim  2^{L} L^{d-1} + 2^{L} L^{d-1} + K - L\\
& \lesssim  2^L n^{d-1} + n \\
& \lesssim 2^n
\end{split}
\end{align}
trigonometric monomials.

To estimate \(S_2\)  {we employ \cite[Theorem 4.4]{MSU24_1}. This states for \(2 \leq q < \infty\) that there always exists a \(P_k\) with \(m_k\) frequencies} such that 
\begin{equation} \label{ps1}
\|f_k - P_k\|_{L_q} \lesssim m_k^{-\frac{1}{2}} \|f_k\|_{\mathcal{A}}.
\end{equation} 
 The result in \cite{MSU24_1} even states this inequality with a constant depending only on \(q\), however for us the asymptotic inequality is sufficient.
 In the regime where \(q= \infty\) we instead get by  \cite[Theorem 4.5]{MSU24_1} and, considering that \(f_k \in \mathcal{T}([-2^k,2^k]^d) \),
\begin{equation} \label{ps_inf}
\|f_k - P_k\|_{L_\infty} \lesssim m_k^{-\frac{1}{2}} \sqrt{\log^\ast(2^{dk})} \|f_k\|_{\mathcal{A}} \lesssim m_k^{-\frac{1}{2}} k^{\frac{1}{2}} \|f_k\|_{\mathcal{A}}.
\end{equation} 
Now for \(\theta \geq 1\) recall that \(\Vert f_k \Vert_{S^{r}_{\theta}\mathcal{A}} \leq \Vert f \Vert_{S^{r}_{\theta}\mathcal{A}} \leq  1\) and that the support of \(f_k\) is limited to the \(k\)-th step hyperbolic layer and therefore contains (up to a constant) at most \(2^k k^{d-1}\) points. This now gives, by Remark \ref{embed} and Lemma \ref{CSI} the following:
\begin{align} \label{1top}
\begin{split}
\|f_k\|_{\mathcal{A}} & \lesssim 2^{-kr} \Vert f_k\Vert_{S_{1}^r\mathcal{A}} \\
& \lesssim 2^{-kr} \|f_k\|_{S^{r}_{\theta}\mathcal{A}} 2^{k} k^{d-1} \big(2^{k} k^{d-1} \big)^{-\frac{1}{\theta}}\\
&\lesssim  2^{-k r} 2^{k (1 - \frac{1}{\theta})} k^{(d-1)(1-\frac{1}{\theta})}.
\end{split}
\end{align}

 The cases \(\theta < 1\) and \(q = \infty\) will be discussed later. Therefore it is sufficient to estimate \(S_2\) for  { \(1 \leq \theta \leq \infty\) and \(q < \infty\)} by using \eqref{ps1}, \eqref{1top}  {and \eqref{m_k}} as follows:
\begin{align} \label{s1}
\begin{split}
S_2 & \lesssim  \sum_{k=L+1}^Km_k^{-\frac{1}{2}}  2^{-kr} 2^{k (1 - \frac{1}{\theta})} k^{(d-1)(1-\frac{1}{\theta})} \\
& =  {  2^{-L \frac{1}{2}} L^{-\frac{1}{2} (d-1)} \sum_{k=L+1}^K(k-L)\,    2^{-k ( r  - 1 + \frac{1}{\theta})} k^{(d-1)(1-\frac{1}{\theta})} }  \\
& \lesssim  2^{-L( \frac{1}{2} +  r  -1 + \frac{1}{\theta} )} L^{- (\frac{1}{2} - 1 + \frac{1}{\theta}) (d-1)}  \\
& \asymp 2^{-(n - (d-1) \log^\ast (n))({\frac{1}{\theta}-\frac{1}{2}}+ r )} (n - (d-1) \log^\ast (n))^{-({\frac{1}{\theta}-\frac{1}{2}})(d-1)} \\
& \asymp 2^{-n({\frac{1}{\theta}-\frac{1}{2}}+ r )} n^{(d-1)(\frac{1}{\theta}-\frac{1}{2} + r )} n^{-(\frac{1}{\theta}-\frac{1}{2})(d-1)} \\
& = 2^{-n({\frac{1}{\theta}-\frac{1}{2}}+ r )} n^{(d-1) r }.
\end{split}
\end{align}
In the second line we needed that \(r > 1 - \frac{1}{\theta}\) so that \(\gamma = r - 1 + \frac{1}{\theta}> 0\) and we can apply Lemma \ref{geo_sum}. In the case \(q= \infty\) the term \(k^{1/2}\) from \eqref{ps_inf} simply yields an additional \(n^{1/2}\) here.

To estimate \(S_3\) we use the fact that, by \eqref{1top} it holds
\begin{equation*}
\|f_k\|_{L_q} \leq \|f_k\|_{L_\infty} \leq \sum_{\boldsymbol{\ell} \in \zz^d}|\hat{f_k}(\boldsymbol{\ell})| = \|f_k\|_\ca \lesssim 2^{-k (r -1 + \frac{1}{\theta})}  k^{(d-1)(1-\frac{1}{\theta})},
\end{equation*}
and therefore, by Lemma \ref{geo_sum} and the definition of \(K\) in \eqref{K}, we get 
\begin{align} \label{s2}
\begin{split}
S_3 & \lesssim \sum_{k=K+1}^\infty 2^{-k (r -1 + \frac{1}{\theta})} k^{(d-1)(1-\frac{1}{\theta})} \\
& \lesssim 2^{-K (r -1 + \frac{1}{\theta})}  K^{(d-1)(1-\frac{1}{\theta})} \\
& \lesssim 2^{- \big(n \, \frac{r+{\frac{1}{\theta}-\frac{1}{2}}}{ r - 1 + \frac{1}{\theta} } - (d-1) \log^\ast(n) \big) (r -1 + \frac{1}{\theta}) }  n^{(d-1)(1-\frac{1}{\theta})} \\
& = 2^{-n ( r  + {\frac{1}{\theta}-\frac{1}{2}})} n^{(d-1)  r}.
\end{split} 
\end{align}

Combining now \eqref{ms}, \eqref{s1}, \eqref{s2} we can estimate \eqref{s} as follows  {for some constant \(C=C(d,r,\theta) \in \mathbb{N}\),}
\begin{align}\label{s_finish}
\begin{split} 
\sigma_{C 2^{n}}(f)_{L_q} & \leq \|f - P\|_{L_q} \\
& \leq  S_1 + S_2 + S_3\\
& \lesssim 0 + 2^{-n( r +{\frac{1}{\theta}-\frac{1}{2}})} n^{(d-1) r  }+ 2^{-n ( r  + {\frac{1}{\theta}-\frac{1}{2}})} n^{(d-1)  r  } \\
& \lesssim 2^{-n(  r  +{\frac{1}{\theta}-\frac{1}{2}})} n^ {(d-1)r}\,. 
\end{split}
\end{align}
For \(m \geq 4C\) we can now argue with monotonicity. Let \(n = \lfloor \log^\ast (C^{-1} m) \rfloor\). Note that
 \begin{equation*}
 \frac{1}{2} \log^\ast (C^{-1} m)
 \leq \log^\ast (C^{-1} m) -1 
 \leq n 
 \leq \log^\ast (C^{-1} m) .
 \end{equation*}
Moreover, note that \(2^n \leq C^{-1} m\), and hence
\begin{align}
\begin{split}
  \sigma_{m}(S_\theta^r \ca)_{L_q} & = \sigma_{C C^{-1} m}(S_\theta^r \ca)_{L_q} \\
  & \leq \sigma_{C2^n}(S_\theta^r \ca)_{L_q} \\
  & \lesssim  2^{-n(r+{\frac{1}{\theta}-\frac{1}{2}})} n^{(d-1)r} \\
  & \lesssim (C^{-1}m)^{-(r+{\frac{1}{\theta}-\frac{1}{2}})} \log^\ast(C^{-1}m)^{(d-1)r}\\
  & \lesssim m^{-(r+{\frac{1}{\theta}-\frac{1}{2}})} \log^\ast(m)^{(d-1)r},
\end{split}
\end{align} 
where in the case \(q = \infty\), \(d > 1\) the additional \(\sqrt{ \log^\ast(m)}\) appears again.
The case \(m < 4C\) is trivial. 

To show this upper bound now also for \(\theta<1\) we use the result for \(\theta=1\) and argue as follows by using the Stechkin Lemma and relation \eqref{mul_snum}.
\[\sigma_{2m} \big( S^{r}_{\theta}\mathcal{A}\big)_{L_q} \leq  \sigma_{m}\big( S^{r}_{\theta}\mathcal{A} \big)_{S^{r}_{1}\mathcal{A}} \,\sigma_{m} \big( S^{r}_{1}\mathcal{A} \big)_{L_q} \leq  m^{- \frac{1}{\theta} + 1} \sigma_{m}  \big( S^{r}_{1}\mathcal{A} \big)_{L_q}.\] 
Then apply the upper bound for \(\theta=1\) to obtain
\begin{equation} \label{p2}
\sigma_{2m}\big(S^{r}_{\theta} \ca \big)_{L_q} \lesssim m^{-(r +{\frac{1}{\theta}-\frac{1}{2}})}\log^\ast(m)^{(d-1) r }.
\end{equation} 

Let us prove the lower bounds.
Given \(m \in \N\), choose \(n \in \N\) minimal with \(C_n \geq 2m\). Since \( C_n \asymp 2^nn^{d-1} \) by relation \eqref{C_j}, this implies \(m \asymp 2^nn^{d-1} \asymp C_n = \#\hyp_n\).
To get a lower bound we  construct a fooling function:
\begin{equation}
f(\bx) = m^{-( r + \frac{1}{\theta})} \log^\ast(m)^{(d-1)r} \sum_{\bk \in\hyp_n} \exp(2 \pi \mathrm{i} \bk \bx) .
\end{equation}

This function has \(S^{r}_{\theta}\mathcal{A}\)-norm as follows  
\begin{align}  \label{fnormed}
\begin{split}
\|f\|_{S^{r}_{\theta}\mathcal{A}} &\asymp m^{-( r + \frac{1}{\theta})} \log^\ast(m)^{(d-1)r} \Big(\sum_{\bk \in\hyp_n} \big(\prod_{j=1}^d (1 + |k_j|)^ r  \big)^\theta\Big)^\frac{1}{\theta}\\
& \asymp  m^{-( r + \frac{1}{\theta})} \log^\ast(m)^{(d-1)r} \Big(\sum_{\bk \in\hyp_n} 2^{ r  \theta \Vert \bk \Vert_{\ell_1} }\Big)^\frac{1}{\theta}\\
& \asymp m^{-( r + \frac{1}{\theta})} \log^\ast(m)^{(d-1)r} \Big(\sum_{\bk \in\hyp_n} 2^{ r \theta n}\Big)^\frac{1}{\theta}\\
& \asymp m^{-( r + \frac{1}{\theta})} \log^\ast(m)^{(d-1)r} \Big(2^n n^{d-1} 2^{ r \theta n}\Big)^\frac{1}{\theta}\\
& \asymp 1 .
\end{split}
\end{align}

Now, given an arbitrary set \(K \subset \Z\) of cardinality \(\# K \leq m\) and arbitrary coefficients \((a_\bk)_{\bk \in K}\) we set \(g =  m^{-\frac{1}{2}} \sum_{\bk \in \hyp_n \setminus K} \exp(2 \pi \mathrm{i} \bk \bx) \) and \(h =  \sum_{\bk \in K} a_{\bk} \exp(2 \pi \mathrm{i} \bk \bx)\). For these functions we get
 \begin{equation} \label{hup}
\|g\|_{L_2} \leq m^{-\frac{1}{2}} \big(\#\hyp_n\big)^{\frac{1}{2}} \lesssim m^{-\frac{1}{2}} m^{\frac{1}{2}} = 1.
\end{equation}
Moreover, we see that because of \(\# \hyp_n - \# K \geq C_n - m \geq 2m - m = m \) that
\begin{align}  \label{tangle}
\begin{split}
\langle f-h,g \rangle&  = \langle f,g \rangle \\
& \gtrsim m^{-( r + \frac{1}{\theta}+\frac{1}{2})} \log^\ast(m)^{(d-1)r} \sum_{k \in \, \hyp_n \! \setminus K} 1\\
& =  m^{-( r + \frac{1}{\theta}+\frac{1}{2})} \log^\ast(m)^{(d-1)r} \big( \# \hyp_n - \,\# K \big) \\
& \geq  m^{-( r + {\frac{1}{\theta}-\frac{1}{2}})} \log^\ast(m)^{(d-1)r}.
\end{split}
\end{align}

Using the Cauchy-Schwarz inequality and \eqref{hup}, this implies 
 \label{csinq}
  \begin{equation} m^{-( r + {\frac{1}{\theta}-\frac{1}{2}})} \log^\ast(m)^{(d-1)r} \lesssim \langle f-h,g \rangle \leq \|f - h\|_{L_2}\, \|g\|_{L_2} \lesssim \|f - h\|_{L_2}.
\end{equation}

Since \(K\) and \(a_\bk\) are arbitrary (so that \(h\) can realize any element of \(\Sigma_m\)) this implies, for all \(q \geq 2\),

 \begin{equation} \label{lowend}
\sigma_{m} \left(S^{r}_{\theta}\mathcal{A} \right)_{L_q} \gtrsim  \inf_{h \in \Sigma_{C_n/2}} \|f - h\|_{ {L_2}} \gtrsim m^{-( r + {\frac{1}{\theta}-\frac{1}{2}})} \log^\ast(m)^{(d-1)r}.
\end{equation}
\end{proof}

Results obtained in \cite{Kri23} as well as Theorem \ref{samp_A1} need the best \(m\)-term approximation measured, not in \(L_\infty\), but in \(\ca\)  {instead}.  Therefore we will now also provide a result of this type for weighted Wiener spaces.
Before we prove any involved bounds in this setting we start with a simple auxiliary lemma regarding the best \(m\)-term approximation width between unweighted Wiener  {spaces} of different orders.
\begin{lem}\label{A2A_simple}
For \(r = 0\) and  \(0 < \theta < \eta \leq \infty\) it holds
\be
\sigma_m(\ca_\theta)_{\ca_\eta} \asymp m^{1/\eta-1/\theta} .
\ee
\end{lem}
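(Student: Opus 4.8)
The plan is to identify $\ca_\theta = S^0_\theta\ca$ with $\ell_\theta$ via the isometry from Remark~\ref{embed} (with $r=0$, so $A_0 f = (\hat f(\bk))_{\bk}$) and then reduce everything to a purely sequence-space statement: $\sigma_m(\ca_\theta)_{\ca_\eta} = \sigma_m(\ell_\theta)_{\ell_\eta}$, where on the right $\sigma_m$ is the discrete best $m$-term width with the coordinate dictionary. This reduction is legitimate because under $A_0$ the trigonometric dictionary $\mathcal T^d$ maps onto the coordinate-vector dictionary of $\ell_\theta$, so best $m$-term approximation is preserved exactly, not just up to constants.

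For the upper bound I would simply invoke the Stechkin Lemma~\ref{stechkin}: for $0<\theta<\eta\leq\infty$ and $\bx\in\ell_\theta$ with $\|\bx\|_{\ell_\theta}\leq 1$ one has $\sigma_m(\bx)_{\ell_\eta}\leq (m+1)^{1/\eta-1/\theta}\|\bx\|_{\ell_\theta}\leq (m+1)^{1/\eta-1/\theta}$, hence $\sigma_m(\ell_\theta)_{\ell_\eta}\lesssim m^{1/\eta-1/\theta}$. For the matching lower bound I would test against the normalized indicator of a block of $2m$ coordinates: let $\bx = (2m)^{-1/\theta}\sum_{j=1}^{2m} \bep_j$ (interpreting $(2m)^{-1/\infty}=1$ when $\theta=\infty$), so $\|\bx\|_{\ell_\theta}=1$. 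Any $\by$ with $\|\by\|_{\ell_0}\leq m$ leaves at least $m$ of these coordinates untouched, so $\|\bx-\by\|_{\ell_\eta}\geq m^{1/\eta}(2m)^{-1/\theta}\asymp m^{1/\eta-1/\theta}$ (again with the usual reading when $\eta=\infty$, where it is $\geq (2m)^{-1/\theta}$ since at least one coordinate equals $(2m)^{-1/\theta}$). Taking the infimum over $\by$ and then the supremum over the unit ball gives $\sigma_m(\ell_\theta)_{\ell_\eta}\gtrsim m^{1/\eta-1/\theta}$.

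Transporting this back through the isometry $A_0$ yields the claimed two-sided bound $\sigma_m(\ca_\theta)_{\ca_\eta}\asymp m^{1/\eta-1/\theta}$; on the function side the fooling function is the trigonometric polynomial $f(\bx)=(2m)^{-1/\theta}\sum_{j=1}^{2m}\exp(2\pi\mathrm{i}\bk_j\bx)$ for any choice of $2m$ distinct frequencies $\bk_1,\dots,\bk_{2m}\in\Z$, and its $\ca_\theta$-norm is exactly $1$ by the definition of $S^0_\theta\ca$. There is no real obstacle here: the only thing to be slightly careful about is the bookkeeping of the endpoint cases $\theta=\infty$ (Korobov/$\ell_\infty$, where $\|\bx\|_{\ell_\theta}=\sup_j|x_j|$) and $\eta=\infty$, and the trivial $m\mapsto 2m$ or $m+1\mapsto m$ adjustments that are absorbed by monotonicity of $m\mapsto\sigma_m$ and the $\asymp$ convention. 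I would expect the whole proof to occupy only a few lines, essentially "apply Stechkin for the upper bound, test the flat vector for the lower bound."
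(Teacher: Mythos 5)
Your proposal is correct and follows essentially the same route as the paper: the upper bound is the Stechkin Lemma applied to the Fourier coefficient sequence via the isometry of Remark~\ref{embed}, and your flat $2m$-frequency fooling function is exactly the $r=0$ specialization of the test polynomial used for the lower bound in the proof of Theorem~\ref{A2A}, which is what the paper invokes. (The only cosmetic remark: the case $\theta=\infty$ you worry about cannot occur, since $0<\theta<\eta\leq\infty$ forces $\theta<\infty$.)
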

\begin{proof}
    Applying  {the Stechkin} Lemma \ref{stechkin} to the definition of the norm immediately yields the upper bound, the lower bound is a direct consequence of the proof of Theorem \ref{A2A}.
\end{proof}

We can get best \(m\)-term rates between Wiener spaces by arguing as in \cite[Theorem 3.1]{NNS22}. For  {$\theta=2$,  $\eta\in\{1;2\}$ and $\theta=1$,  $\eta\in\{1;2\}$ }  similar  {results were} shown in  {\cite[Theorem 4.2]{NN22} and} \cite[Theorem 4.3]{NN22},  {respectively,} where, in particular, the asymptotic constants were also computed. The following result generalizes these to a much broader range of parameters.

\begin{theorem}\label{A2A}
For \(r > \big(\frac{1}{\eta} - \frac{1}{\theta}\big)_+\) and   {\( \theta, \eta \in (0,\infty]\)} it holds
\be
\sigma_m(S^r_\theta\ca)_{\ca_\eta} \asymp 
m^{\frac{1}{\eta} - \frac{1}{\theta}-r}\log^\ast(m)^{(d-1)r}.
\ee
\end{theorem}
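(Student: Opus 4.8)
The plan is to establish Theorem \ref{A2A} by essentially the same decomposition strategy used in the proof of Theorem \ref{sigma_m_A_0<p<infty,q_bigger=_2}, but now measuring the error in the $\ca_\eta$ norm rather than in $L_q$, which makes the argument purely sequence-analytic and lets us dispense with the de la Vall\'ee Poussin step entirely. The key simplification is that on a single step hyperbolic layer $\hyp_j$ the weights $\prod_{l=1}^d(1+|k_l|)^r$ are all comparable to $2^{jr}$, so approximating $f$ in $S^r_\theta\ca$ restricted to $\hyp_j$ reduces to approximating a finite sequence of length $C_j \asymp 2^j j^{d-1}$ in $\ell_\eta$ from its membership in (a rescaled) $\ell_\theta$-ball, which is governed by the Stechkin Lemma \ref{stechkin} when $\theta < \eta$ and trivially by the triangle inequality when $\theta \geq \eta$.

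For the upper bound, write $f = \sum_{j\geq 0} f_j$ with $f_j$ supported on $\hyp_j$ as in \eqref{firstsum}--\eqref{segment}, and set $m = 2^n$. First I would note that $\|f_j\|_{\ca_\eta} \asymp 2^{-jr}\|f_j\|_{S^r_\eta\ca}$ and, when $\theta \le \eta$, $\|f_j\|_{S^r_\eta\ca}\le \|f_j\|_{S^r_\theta\ca}$, so $\|f_j\|_{\ca_\eta}\lesssim 2^{-jr}$; when $\theta>\eta$ we instead use Lemma \ref{CSI} (Hölder) to pick up the factor $(2^j j^{d-1})^{1/\eta-1/\theta}$, giving $\|f_j\|_{\ca_\eta}\lesssim 2^{-jr} 2^{j(1/\eta-1/\theta)} j^{(d-1)(1/\eta-1/\theta)}$. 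Then I would split the approximation exactly as in \eqref{s}: keep the layers $k \le L$ exactly ($P_k = f_k$), approximate layers $L < k \le K$ by $m_k = (k-L)^{-2} 2^L L^{d-1}$ terms using Stechkin in $\ell_\eta$ (so $\|f_k - P_k\|_{\ca_\eta} \lesssim m_k^{1/\eta - 1/\theta}\|f_k\|_{S^r_\theta\ca}$ on the relevant layer, after the weight normalization $\asymp 2^{-kr}$), and discard the tail $k > K$, bounding $\|f_k\|_{\ca_\eta}$ by the decay estimate above. Choosing $L = \lceil n - (d-1)\log^\ast n\rceil$ and $K$ the analogue of \eqref{K} with exponent $\frac{r + 1/\eta - 1/\theta}{r - (1/\eta-1/\theta)_+}$ (which is where the hypothesis $r > (1/\eta-1/\theta)_+$ enters, exactly so the relevant geometric-type sum converges via Lemma \ref{geo_sum}), the three pieces each come out to $\asymp 2^{-n(r + 1/\eta - 1/\theta)} n^{(d-1)r}$, and the total count of monomials used is $\lesssim 2^L n^{d-1} \lesssim m$, as in \eqref{ms}. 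A monotonicity argument then removes the restriction $m = 2^n$. For the degenerate case $r = 0$ with $\theta \ge \eta$ the assertion is vacuous since then the stated rate is $\gtrsim 1$; the genuine content is $\theta < \eta$ or $r>0$.

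For the lower bound I would reuse the fooling-function construction from the end of the proof of Theorem \ref{sigma_m_A_0<p<infty,q_bigger=_2}, but duality is even simpler here because the $\ca_\eta$ norm has an explicit dual. Take $f = B\, m^{-(r + 1/\theta)}\log^\ast(m)^{(d-1)r}\sum_{\bk\in\hyp_n}\exp(2\pi\mathrm i\bk\bx)$ with $m = 2^n n^{d-1}$, normalized so $\|f\|_{S^r_\theta\ca}\asymp 1$ exactly as in \eqref{fnormed}. For any $h\in\Sigma_{C_n/2}$ with $C_n$ from \eqref{C_j}, at least $C_n/2 \asymp m$ of the frequencies in $\hyp_n$ are untouched, each with $|\widehat{f-h}(\bk)| = |\widehat f(\bk)|\asymp m^{-(r+1/\theta)}\log^\ast(m)^{(d-1)r}$, so $\|f - h\|_{\ca_\eta} \geq \big(\sum_{\bk\in\hyp_n\setminus\operatorname{supp}h}|\widehat f(\bk)|^\eta\big)^{1/\eta}\gtrsim m^{-(r+1/\theta)}\log^\ast(m)^{(d-1)r} m^{1/\eta} = m^{1/\eta - 1/\theta - r}\log^\ast(m)^{(d-1)r}$, and taking the infimum over $h$ and discarding the constraint on $m$ by monotonicity finishes the proof. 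The main obstacle is bookkeeping rather than conceptual: getting the exponent in $K$ right so that $S_1$, $S_2$, $S_3$ all balance to the same rate and the monomial count stays $\lesssim m$, and carefully tracking the extra logarithmic factors in the $\theta > \eta$ regime; once the Stechkin-in-$\ell_\eta$ estimate replaces the $L_q$-recovery estimate \eqref{ps1}, everything else is a routine adaptation, which is why the theorem is stated with an asymptotic (rather than explicit-constant) equivalence.
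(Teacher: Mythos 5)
Your strategy is sound and, for the upper bound, genuinely different from the paper's in the regime \(\theta\le\eta\): the paper does not decompose into layers there at all, but instead uses multiplicativity of the widths, \(\sigma_{2m}(S^r_\theta\ca)_{\ca_\eta}\le\sigma_m(\ca_\theta)_{\ca_\eta}\,\sigma_m(S^r_\theta\ca)_{\ca_\theta}\), i.e.\ one global Stechkin step on the weight-rearranged coefficient sequence, and then computes the \(m\)-th smallest hyperbolic weight \(\omega_m\asymp m\log^\ast(m)^{1-d}\); your layerwise Stechkin with the three-range split \(L<k\le K\) buys a proof that runs parallel to Theorem \ref{sigma_m_A_0<p<infty,q_bigger=_2} and makes the role of the balancing parameter \(K\) explicit, at the cost of more bookkeeping. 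In the regime \(\theta>\eta\) your layerwise H\"older computation is essentially identical to the paper's (projection onto layers \(j\le L\), Lemma \ref{CSI}, Lemma \ref{geo_sum}); note that there Stechkin gives no gain, so the middle range is vacuous and only the tail estimate matters, which is exactly where the hypothesis \(r>1/\eta-1/\theta\) is used. For the lower bound you use a single-layer fooling function normalized as in \eqref{fnormed}, evaluated directly coefficientwise in \(\ca_\eta\); the paper instead takes the initial segment of \(2m\) frequencies in the weight ordering with coefficients \(\omega_k^{-r}(2m)^{-1/\theta}\). Both give the same rate, and your version needs no duality at all since the \(\ca_\eta\)-norm of the residual is computed directly.

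Two points need repair before this is a complete proof. First, your formula for \(K\) has the wrong sign in the numerator: the analogue of \eqref{K} must have numerator equal to the target decay exponent \(r+\frac{1}{\theta}-\frac{1}{\eta}\) and denominator equal to the tail decay rate \(r-\big(\frac{1}{\eta}-\frac{1}{\theta}\big)_+\), i.e.\ \(K\approx n\,\frac{r+1/\theta-1/\eta}{\,r-(1/\eta-1/\theta)_+}\). With your stated numerator \(r+\frac{1}{\eta}-\frac{1}{\theta}\) the choice of \(K\) in the case \(\theta<\eta\) falls at or below \(L\), the middle range disappears, and the tail only contributes \(2^{-Lr}\asymp m^{-r}\log^\ast(m)^{(d-1)r}\), which misses the claimed rate \(m^{-(r+1/\theta-1/\eta)}\log^\ast(m)^{(d-1)r}\); with the corrected \(K\) the three pieces do balance exactly as you describe. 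Second, for \(\eta<1\) the splitting \(\|f-P\|_{\ca_\eta}\le S_1+S_2+S_3\) is not a valid triangle inequality; since the blocks \(f_k-P_k\) have pairwise disjoint frequency supports you should instead use the exact identity \(\|f-P\|_{\ca_\eta}^\eta=\sum_k\|f_k-P_k\|_{\ca_\eta}^\eta\) and sum the \(\eta\)-th powers, which by Lemma \ref{geo_sum} yields the same final rate. (Your remark about a ``degenerate case \(r=0\)'' is moot: the hypothesis \(r>(1/\eta-1/\theta)_+\) already forces \(r>0\) in every admissible configuration.) With these corrections your argument goes through and recovers the theorem.
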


\bproof
For every \( \bk \in \zz^d \) we define the corresponding weight according to the \(S^1_1\ca\)-norm as
\begin{equation} \label{omega_bk}
    \omega_\bk \coloneqq \prod_{j = 1}^d (\vert k_j \vert +1) .
\end{equation}

 {For \(\bk \in \hyp_{n}\), where \(\hyp_n\) is defined as in relation \eqref{square}, clearly it holds that 
\begin{equation} \label{weight_hyp}
    2^{n-d} < \omega_\bk \leq 2^n.
\end{equation} 
This also implies for any \(n \geq d \)
\begin{equation} \label{order_weights}
    \sup_{\bk \in \hyp_{n-d}} \omega_{\bk} \leq \inf_{\bk \in \hyp_{n}} \omega_{\bk}\leq \sup_{\bk \in \hyp_{n}} \omega_{\bk} \leq \inf_{\bk \in \hyp_{n+d}} \omega_{\bk}.
\end{equation}}

Let \(J:\Z \to \N\) be an ordering, such that the weights \(\omega_{J^{-1}(k)}\) are non-decreasing. We can now extend the definition of \(\omega\) to \(k \in \N\):

\be \label{omega_N}
\omega_k \coloneqq \prod_{j = 1}^d (\vert J^{-1}(k)_j \vert +1) .
\ee
We start with the simpler regime where  {\(\infty \geq \eta \geq \theta > 0\)}. By the submultiplicativity of the best \(m\)-term approximation {widths}, see \eqref{mul_snum} and Lemma \ref{A2A_simple} we get
\begin{align}
\sigma_{2m}(S^r_\theta\ca)_{\ca_\eta} & \leq \sigma_m(\ca_\theta)_{\ca_\eta}\,\, \sigma_m(S^r_\theta\ca)_{\ca_\theta} \\
    & \lesssim m^{1/\eta -1/\theta} \sup_{\Vert f\Vert_{S^r_\theta\ca} \leq 1} \Big(  \sum_{k = m+1}^\infty \big\vert \hat{f}(J^{-1}(k)) \big\vert^\theta\Big)^{1/\theta}\\
  & \leq m^{1/\eta -1/\theta} \omega_{m}^{-r} \sup_{\Vert f\Vert_{S^r_\theta\ca} \leq 1} \Big( \sum_{k = m+1}^\infty \big\vert \hat{f}(J^{-1}(k)) \big\vert^\theta \omega_k^{r\theta} \Big)^{1/\theta}\\
  & \leq m^{1/\eta -1/\theta} \omega_m^{-r}.
\end{align}
Now we just need to analyze the growth of \(\omega_m\). To this end we assume \(m = 2^n\) and consider the step hyperbolic layers from \eqref{square}. By \eqref{C_j} we know that the \(n\)-th  {hyperbolic layer has asymptotically} \( 2^n n^{d-1}\) entries. Taking the sum up to \(n\) yields
\be
 {\sum_{l = 0}^n \# \hyp_l \asymp} \sum_{l = 0}^n 2^l l^{d-1}  \asymp 2^n n^{d-1}.
\ee
Therefore  {by \eqref{weight_hyp} and \eqref{order_weights} we have,
\begin{align}
  2^n \lesssim 2^{n-d} \leq \sup_{\bk \in \hyp_{n-d}} \omega_{\bk} \leq \omega_{2^n n^{d-1}} \leq \inf_{\bk \in \hyp_{n+d}} \omega_\bk\lesssim 2^n,
\end{align} }
i.e. \(2^n \asymp\omega_{2^n n^{d-1}} \). From this we can conclude
\begin{equation} \label{weights_upper}
  \omega_{2^n}  \geq \omega_{\lfloor2^{ n-(d-1)\log^\ast(n) } ( n-(d-1)\log^\ast(n))^{d-1}\rfloor} \asymp  2^{n-(d-1)\log^\ast(n)} = 2^n {n}^{1-d},
\end{equation}
and further for \(n \geq 2(d-1) \log^\ast(n)\)
\begin{equation} \label{weights_lower}
  \omega_{2^n} \leq \omega_{\lceil 2 2^{ n-(d-1)\log^\ast(n) } (n-(d-1)\log^\ast(n))^{d-1}\rceil} \asymp  2^{n-(d-1)\log^\ast(n)} = 2^n {n}^{1-d}.
\end{equation}
This implies 
\be
 \omega_{m} = \omega_{2^n} \asymp 2^{n-(d-1)\log^\ast(n)} = m \log^\ast(m)^{1-d},
\ee
which concludes the first part of the proof after arguing again with monotonicity and employing the embedding from the beginning of the proof.

Now we continue with the more involved situation where  {\(\infty \geq \theta > \eta > 0\)}, here we have the additional condition \(r > \frac{1}{\eta} - \frac{1}{\theta}\).
We set \(m= 2^n\) and choose \(L= \lceil n - (d-1) \log^\ast(n) - \kappa \rceil\) slightly different as before in \eqref{L}, where \(\kappa \in \N\) is chosen later. Note that we always have \(L \leq n\) We start the estimation by decomposing onto our uniform hyperbolic layers and reformulating the \(\ca_\eta\)-norm in terms of the \({S_\theta^r\ca}\)-norm. This gives
\begin{align}    
\sigma_m\big(S_\theta^{r}\ca\big)_{\ca_\eta} & = \sup_{\Vert f \Vert_{S_\theta^r\ca} \leq 1} \inf_{s \in \Sigma_m} \Vert f - s \Vert_{\ca_\eta}\\
    & \leq \sup_{\Vert f \Vert_{S_\theta^r \ca} \leq 1} \inf_{s \in \Sigma_m} \Big(\sum_{\bk \in \Z} \vert \hat{f}(\bk) - \hat{s}(\bk) \vert^\eta \Big)^{1/\eta} \\
    & = \sup_{\Vert f \Vert_{S_\theta^r\ca} \leq 1} \inf_{s \in \Sigma_m} \Big(\sum_{j \in \N_0} \sum_{\bk \in \hyp_j} \vert \hat{f}(\bk) - \hat{s}(\bk) \vert^\eta \Big)^{1/\eta} \\
& { \lesssim \sup_{\Vert f \Vert_{S_\theta^r\ca} \leq 1} \Big(\sum_{j = L+1}^\infty \big( \sum_{\bk \in \hyp_j} \vert \hat{f}(\bk) \vert^\eta \big)^{\eta/\eta } \Big)^{1/\eta}.}
\end{align}

 Where in the last line we need that
 \begin{equation}
     \sum_{j=0}^L C_j \leq C L^{d-1}  \sum_{j=0}^L 2^j \leq C L^{d-1} 2^{L+1} \leq 4 C n^{d-1} 2^{n - \kappa} = \frac{4C}{2^{\kappa}} 2^n \leq 2^n = m.
 \end{equation}
 This is true provided \(\kappa\) is chosen such that \(4C \leq 2^\kappa\).
 Now we estimate the \(\ell_\eta\)-norm of a sequence with at most \(C_j\) non-zero entries (see \eqref{C_j})  {from above by Lemma \ref{CSI},}

     { \begin{align}
    \begin{split} \sigma_m\big(S_\theta^{r}\ca\big)_{\ca_\eta} & \lesssim \sup_{\Vert f \Vert_{S_\theta^r\ca} \leq 1} \Big(\sum_{j = L+1}^\infty C_j^{\frac{\theta- \eta}{\theta}} \big(  \sum_{\bk \in \hyp_j} \vert \hat{f}(\bk) \vert^\theta \big)^{\frac{\eta}{\theta}}\Big)^{\frac{1}{\eta}} \\
   & \asymp \sup_{\Vert f \Vert_{S_\theta^r\ca} \leq 1} \Big(\sum_{j = L+1}^\infty C_j^{\frac{\theta- \eta}{\theta}}  \big( 2^{-j\theta r} \sum_{\bk \in \hyp_j} \omega_{\bk}^{\theta r} \vert \hat{f}(\bk) \vert^\theta \big)^{\frac{\eta}{\theta}}\Big)^{\frac{1}{\eta}} \\
 & = \Big(\sum_{j = L+1}^\infty C_j^{\frac{\theta-\eta}{\theta} }  2^{-r j \eta} \big( \sup_{\Vert f \Vert_{S_\theta^r\ca} \leq 1}  \sum_{\bk \in \hyp_j} \omega_{\bk}^{\theta r} \vert \hat{f}(\bk) \vert^\theta\big)^{\frac{\eta}{\theta}} \Big)^{\frac{1}{\eta}} \\
 & \lesssim \Big(\sum_{j = L+1}^\infty (2^j j^{d-1})^{1 -\frac{\eta}{\theta} } 2^{-rj\eta}\Big)^{\frac{1}{\eta}} \\
 & \leq \Big(\sum_{j = L+1}^\infty 2^{-j( r\eta -1  + \frac{\eta}{\theta} )} j^{(1-\frac{\eta}{\theta} )(d-1)} \Big)^{\frac{1}{\eta}}.
 \end{split}
    \end{align}}
    Since  {this} sum is  {decaying} geometrically we can  {apply Lemma \ref{geo_sum}} (with \(\alpha = 0, \beta = (1-\frac{\eta}{\theta} )(d-1)\) and \(\gamma = r\eta -1  + \frac{\eta}{\theta} \)),
    \begin{align}   
    \begin{split}
    \sigma_m\big({S_\theta^r\ca}\big)_{\ca_\eta}  &   {\lesssim \big(
 2^{-L( r \eta - 1 + \frac{\eta}{\theta} )} L^{(1-\frac{\eta}{\theta} )(d-1)} \big)^{1/\eta} } \\
 & \lesssim  {  2^{-(n-(d-1) \log^\ast (n)-\kappa )(r-1/\eta + 1/\theta)}} n^{(1/\eta-1/\theta)(d-1)} \\
 & \lesssim  m^{-(r-1/\eta+1/\theta)} \log^\ast(m)^{(r-1/\eta+1/\theta)(d-1)} \log^\ast(m)^{(1/\eta-1/\theta)(d-1)} \\
 & = m^{-(r-1/\eta+1/\theta)} \log^\ast(m)^{r(d-1)}.
     \end{split}
    \end{align}
One can again extend this from \(m = 2^n\) to general \(m \in \N\) by monotonicity.

To get a lower bound consider the trigonometric polynomial 
\be
t(\bx) = \sum_{k =1}^{2m} \frac{\omega_{k}^{-r}}{(2m)^{1/\theta} } \exp(2 \pi \mathrm{i} J^{-1}(k) \bx).
\ee
This has the following \(S^r_{\theta}\ca\)-norm
\be
\Vert t \Vert_{S^r_{\theta}\ca} =\Big(\sum_{\bk\in\Z} \prod_{i=1}^d (1+|k_i|)^{r\theta}|\hat{t}(\bk)|^\theta\Big)^{1/\theta} = \Big(\sum_{k=1}^{2m} \omega_k^{r\theta} \frac{\omega_{k}^{-r\theta}}{2m}\Big)^{1/\theta} = 1,
\ee
and therefore lies on the unit sphere of this space. Now it holds
\begin{align} \sigma_m(S^r_{\theta}\ca)_{\ca_\eta} & \geq \inf_{s \in \Sigma_m} \Big(\sum_{\bk \in \Z} \Big\vert \hat{t}(\bk) - \hat{s}(\bk) \Big\vert^\eta \Big)^{1/\eta} \\
  & \geq \inf_{s \in \Sigma_m} \Big(\sum_{k = 1}^{2m} \Big\vert \frac{\omega_{k}^{-r}}{(2m)^{1/\theta} } - \hat{s}(J^{-1}(k)) \Big\vert^\eta \Big)^{1/\eta} \\
  & \geq m^{1/\eta} \frac{\omega_{2m}^{-r}}{(2m)^{1/\theta}} \\
  & \gtrsim m^{1/\eta-1/\theta-r} \log^\ast(m)^{(d-1)r}.
\end{align}
\eproof

Similar results could also be shown by combining results from \cite{Stepanets_UMZh_2001_N8}, 
\cite[Chapter 11]{Stepanets_MAT_2005} 
and \cite{Gao_JAT_2010} with the considerations from \eqref{weights_upper} and \eqref{weights_lower}.

\begin{lem}\label{E_M}
    For \(0 < \theta \leq \infty\) and \( r > \big(1 - \frac{1}{\theta}\big)_+\) it holds
    \begin{equation}
        E_{[-M,M]^d}(S^r_\theta \ca)_{L_\infty} \lesssim 
        \begin{cases}
            M^{-r}, & \theta \leq 1 \\
            M^{1- \frac{1}{\theta}-r}, & \theta > 1.
        \end{cases}
    \end{equation}
\end{lem}
\begin{proof}
    Set \(\widetilde{\theta} = \max\{1, \theta \}\), then it holds
    \begin{align}
        \begin{split}
             E_{[-M,M]^d}(S^r_\theta \ca)_{L_\infty} & \leq  E_{[-M,M]^d}(S^r_\theta \ca)_{\ca} \\
             & = \sup_{\Vert f\Vert_{S^r_\theta \ca} \leq 1} \sum_{\bk \in \Z \setminus [-M,M]^d} \omega^{-r}_\bk \omega^{r}_\bk \vert \hat{f}(\bk)\vert \\
             & \leq \sup_{\Vert f\Vert_{S^r_\theta \ca} \leq 1} \Vert (\omega^{-r}_\bk)_{\bk \in \Z \setminus [-M,M]^d} \Vert_{\ell_{\theta'}}  \Vert (\omega^{r}_\bk \vert \hat{f}(\bk) \vert)_{\bk \in \Z} \Vert_{\ell_{\theta}} \\
             & \leq \Vert (\omega^{-r}_\bk)_{\bk \in \Z \setminus [-M,M]^d} \Vert_{\ell_{\widetilde{\theta}'}}.
        \end{split}
    \end{align}
    For \( \theta \leq 1 \) we have \( {\widetilde{\theta}'} = \infty \) and therefore \( E_{[-M,M]^d}(S^r_\theta \ca)_{L_\infty} \leq M^{-r} \). Otherwise we have \(\widetilde{\theta} = \theta\) and can continue by noting that \(r \theta' > 1\) with
    \begin{align}
        \begin{split}
            E_{[-M,M]^d}(S^r_\theta \ca)_{L_\infty}^{\theta'} & \leq \sum_{\bk \in \Z \setminus [-M,M]^d} \omega^{-r\theta'}_\bk \\
            & \leq d \sum_{\bk \in \zz^{d-1}} \sum_{l \in \zz \setminus [-M,M]} (1+ \vert l\vert)^{-r\theta'} \prod_{j=1}^{d-1} (1+\vert k_j \vert)^{-r\theta'} \\
            & \leq 2d \sum_{l = M +1}^\infty (1+ \vert l\vert)^{-r\theta'} \sum_{\bk \in \zz^{d-1}}  \prod_{j=1}^{d-1} (1+\vert k_j \vert)^{-r\theta'} \\
            &\lesssim \sum_{l = M +1}^\infty (1+ \vert l\vert)^{-r\theta'} \\
            & \lesssim M^{1-r\theta'}.
        \end{split}
    \end{align}
    Where in the second to last line we used the fact that \( \sum_{\bk \in \zz^{d}}  \prod_{j=1}^{d} (1+\vert k_j \vert)^{-r\theta'} \leq C^d\) for some C (this can easily be checked by induction over \(d\)). The inequality in the last line holds either by comparing to the integral or employing the Cauchy condensation test. Taking the \(\theta'\)-th root now gives the assertion.
\end{proof}

\section{Embeddings between Besov-Sobolev and weighted Wiener spaces}

\begin{defi}[Besov space with mixed smoothness]
For \(1 < p < \infty\), \(0 < \theta \leq \infty\) and $r>0$ we define the (periodic) Besov space \(S^{r}_{p,\theta}B\) with mixed smoothness, see also, \cite[Chapter 3]{DTU18} or \cite[Chapter 2]{SchTr87} as
\be \label{def_Besov}
 S^{r}_{p,\theta}B(\tor^d) \coloneqq  {S^{r}_{p,\theta}B \coloneqq} \Big\{ f \in L_p(\tor^d)~:~ \Vert f \Vert_{S^{r}_{p,\theta}B} < \infty \Big\}
\ee
with the (quasi-)norm
\be \label{def_Besov_norm}
\Vert f  \Vert_{S^{r}_{p,\theta}B } \coloneqq \bigg( \sum_{\bj \in \nd }  2^{\Vert \bj \Vert_{\ell_1} r \theta} \Big\Vert \sum_{\bk\in I_{\bj} } \hat{f}(\bk)\exp(2\pi \mathrm{i} \bk \cdot)\Big\Vert_{ {L_p}}^\theta\bigg)^{\frac{1}{\theta}},
\ee 
(with the usual modification if $\theta=\infty$), where \(I_ {\mathbf{\bj}} \) is defined in \eqref{Ik}.
\end{defi}

The space $S^r_pW$ of functions with bounded mixed derivative  {is} defined in the following way.

 \begin{defi}[Sobolev space with bounded mixed derivative] \label{def2Sob_L-P}
For \(1 < p < \infty\) and $r\geq 0$ we define the (periodic) Sobolev space \(S^{r}_{p}W\) with mixed smoothness (derivative)
\be \label{def_Sobolev_L-P}
  S^r_p W(\mathbb{T}^d)
  := S^{r}_{p}W 
 \coloneqq 
 \Big\{ f \in L_p(\tor^d)~:~ \Vert f \Vert_{S^{r}_{p}W} < \infty \Big\}
\ee
with the norm
\be \label{def_Sobolev_norm_L-P}
\Vert f  \Vert_{S^{r}_{p}W } \coloneqq \bigg\Vert \bigg( \sum_{\bj \in \nd } \Big( 2^{\Vert \bj \Vert_{\ell_1} r} \Big\vert \sum_{\bk\in I_{\bj} } \hat{f}(\bk)\exp(2\pi \mathrm{i} \bk \cdot) \Big\vert \Big)^2 \bigg)^{\frac{1}{2}} \bigg\Vert_{ {L_p}} .
\ee 
 In the case $r=0$ we have $S^{0}_{p}W:=L_p$ and put $\Vert f  \Vert_{S^{0}_{p}W }:=\Vert f \Vert_{L_p}$. 
  \end{defi}

\begin{theorem}\label{SB_subset_SA_theta<2<p}
If $0<\theta \leq 2\leq p<\infty$, $r\geq 0$  then 
 \be  \label{SB_subset_SA_theta<2<p_EMB}
 S^{r+\frac{1}{\theta}-\frac{1}{2}}_{p,\theta}B
 \hookrightarrow 
 S^{r}_{\theta}\mathcal{A}\,,
 \ee
 where the embedding has norm $1$. 
\end{theorem}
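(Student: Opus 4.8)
The plan is to compare the two (quasi-)norms block by block over the dyadic cubes $I_{\bj}$ from \eqref{Ik}. Write $f_{\bj}\coloneqq\sum_{\bk\in I_{\bj}}\hat f(\bk)\exp(2\pi\mathrm i\bk\cdot)$, so that $f=\sum_{\bj\in\nd}f_{\bj}$. The elementary but crucial point is that for $\bk\in I_{\bj}$ one has, coordinatewise, $1+|k_i|\le 2^{j_i}$ (checking $j_i=0$ and $j_i\ge1$ separately), hence $\prod_{i=1}^d(1+|k_i|)\le 2^{\|\bj\|_{\ell_1}}$ \emph{with constant one}. Consequently
$$\|f\|_{S^r_\theta\ca}^\theta=\sum_{\bj\in\nd}\ \sum_{\bk\in I_{\bj}}\prod_{i=1}^d(1+|k_i|)^{r\theta}\,|\hat f(\bk)|^\theta\ \le\ \sum_{\bj\in\nd}2^{r\theta\|\bj\|_{\ell_1}}\sum_{\bk\in I_{\bj}}|\hat f(\bk)|^\theta .$$

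It then remains to bound the inner $\ell_\theta$-sum of a single block in terms of $\|f_{\bj}\|_{L_p}$. Since $f_{\bj}$ is supported on at most $\#I_{\bj}=2^{\|\bj\|_{\ell_1}}$ frequencies and $0<\theta\le 2$, Lemma \ref{CSI} (applied with exponents $\theta$ and $2$, using $\tfrac{2-\theta}{2\theta}=\tfrac1\theta-\tfrac12$) together with Parseval gives
$$\Big(\sum_{\bk\in I_{\bj}}|\hat f(\bk)|^\theta\Big)^{1/\theta}\le 2^{\|\bj\|_{\ell_1}(1/\theta-1/2)}\,\big\|(\hat f(\bk))_{\bk\in I_{\bj}}\big\|_{\ell_2}=2^{\|\bj\|_{\ell_1}(1/\theta-1/2)}\,\|f_{\bj}\|_{L_2}\le 2^{\|\bj\|_{\ell_1}(1/\theta-1/2)}\,\|f_{\bj}\|_{L_p},$$
where the last step is the norm-one embedding $L_p(\tor^d)\hookrightarrow L_2(\tor^d)$, valid because $\tor^d$ carries a probability measure and $p\ge2$. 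Raising to the power $\theta$ and inserting into the previous display, the exponents combine to $r\theta+1-\tfrac\theta2=\theta\big(r+\tfrac1\theta-\tfrac12\big)$, whence
$$\|f\|_{S^r_\theta\ca}^\theta\le\sum_{\bj\in\nd}2^{\|\bj\|_{\ell_1}\theta(r+1/\theta-1/2)}\,\|f_{\bj}\|_{L_p}^\theta=\|f\|_{S^{r+1/\theta-1/2}_{p,\theta}B}^\theta,$$
which is \eqref{SB_subset_SA_theta<2<p_EMB} with embedding norm at most $1$.

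To see that the embedding norm equals $1$ I would test the inequality on the constant function $f\equiv1$: its only nonvanishing Fourier coefficient is $\hat f(\mathbf 0)=1$, so $\|f\|_{S^r_\theta\ca}=1$, while only the $\bj=\mathbf 0$ block contributes to the Besov norm, giving $\|f\|_{S^{r+1/\theta-1/2}_{p,\theta}B}=\|1\|_{L_p}=1$. There is no real obstacle in this argument; the only thing demanding attention is to keep every constant equal to $1$ — this forces using the sharp coordinatewise bound $1+|k_i|\le 2^{j_i}$ instead of an asymptotic equivalence, the sharp Hölder/Parseval estimates inside each block, and the norm-one $L_p\hookrightarrow L_2$ inclusion. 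The value $\theta=\infty$ is excluded by hypothesis, so no separate modification is needed.
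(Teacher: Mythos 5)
Your proof is correct and follows essentially the same route as the paper: block decomposition over the cubes $I_{\bj}$, a H\"older-type estimate within each block (you pull the weight out via $1+|k_i|\le 2^{j_i}$ and invoke Lemma \ref{CSI}, while the paper applies H\"older with the weights kept inside --- the same computation), Parseval, the norm-one inclusion $L_p\hookrightarrow L_2$, and the test function $f\equiv 1$ for sharpness. No gaps.
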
  
\begin{proof} 
Using the H\"{o}lder inequality for $1\leq  { \, \frac{2}{\theta}=:\tau } <\infty$,  {\eqref{Vol_Ik}} and $\|\cdot\|_{ {L_2}}\leq \|\cdot\|_{ {L_p}}$, $2\leq p$, we have 
\begin{align} \label{SB_subset_SA_theta_leq_2_leq_p}
  \begin{split}
\|f\|_{{S^{r}_{\theta}\mathcal{A}}}
    & = \bigg(\sum\limits_{\bj \in \mathbb{N}_0^d} \sum\limits_{\bk \in I_{\bj}}
    \prod_{i=1}^d (1+|k_i|)^{r \theta}
    |\hat{f}(\bk)|^{\theta} \bigg)^{\frac{1}{\theta}}  \\
    & \ \leq \bigg( \sum\limits_{\bj \in \mathbb{N}_0^d} \Big(\sum\limits_{\bk \in I_{\bj}}
    \big(\prod_{i=1}^d (1+|k_i|)^{r \theta} \big)^{\frac{2}{2-\theta}} \Big)^{1-\frac{\theta}{2}}
    \Big(\sum\limits_{\bk \in I_{\bj}} |\hat{f}(\bk)|^2\Big)^{\frac{\theta}{2}} \bigg)^{\frac{1}{\theta}} \\
    & \ \leq  \bigg( \sum\limits_{\bj \in \mathbb{N}_0^d} 2^{ {\Vert \bj \Vert_{\ell_1}} r \theta} 
    \big(\#I_{\bj}\big)^{1-\frac{\theta}{2}}
    \Big\Vert \sum_{\bk\in I_{\bj} } \hat{f}(\bk)\exp(2\pi \mathrm{i} \bk \cdot)\Big\Vert_{ {L_2}}^\theta \bigg)^{\frac{1}{\theta}}  \\
    & \ \leq  \bigg( \sum\limits_{\bj \in \mathbb{N}_0^d} 2^{ {\Vert \bj \Vert_{\ell_1}} (r+\frac{1}{\theta}-\frac{1}{2}) \theta} \Big\Vert \sum_{\bk\in I_{\bj} } \hat{f}(\bk)\exp(2\pi \mathrm{i} \bk \cdot)\Big\Vert_{ {L_p}}^\theta \bigg)^{\frac{1}{\theta}}  \\
    & \  = \big\|f\big\|_{S^{r+\frac{1}{\theta}-\frac{1}{2}}_{p,\theta}B}\,.
  \end{split}
\end{align}
  {This demonstrates the embedding has norm at most $1$. By checking the fooling function  $f\equiv 1$, we also get that this embedding has norm at least $1$. }
\end{proof}

\begin{rem}\label{Besov_subset_Wiener_theta=1_p=2_REM}
The case $\theta=1$, $p=2$, $r=0$ of Theorem \ref{SB_subset_SA_theta<2<p} in the $1$-dimensional setting (as a particular case of $d$-dimensional setting for isotropic smoothness)  can be found in \cite[Lemma 3.3]{CoKuSi_JMAA_19}.
\end{rem}

\begin{theorem}\label{SB+1/p_subset_SA}
 If 
  {$1<p\leq 2$, $0<\theta\leq 2$,}
 $r\geq 0$ then
 \be \label{Besov_subset_Wiener_0<p,theta<2}  
 S^{r+{\frac{1}{p}}+{\frac{1}{\theta}-1}}_{p,\theta}B
 \hookrightarrow S^{r}_{\theta}\mathcal{A} \,.
 \ee  
\end{theorem}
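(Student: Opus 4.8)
The plan is to run the same dyadic-block decomposition used in the proof of Theorem~\ref{SB_subset_SA_theta<2<p}, but to replace the step $\|\cdot\|_{L_2}\le\|\cdot\|_{L_p}$ (which required $p\ge2$) by the Hausdorff--Young inequality, which is the natural tool in the range $1<p\le2$. First I would split $\Z=\bigcup_{\bj\in\nd}I_{\bj}$ (see \eqref{Ik}) and use that for $\bk\in I_{\bj}$ and $r\ge0$ one has $\prod_{i=1}^d(1+|k_i|)\le 2^{\Vert\bj\Vert_{\ell_1}}$, hence
\[
\|f\|_{S^{r}_{\theta}\mathcal{A}}^{\theta}
=\sum_{\bj\in\nd}\sum_{\bk\in I_{\bj}}\prod_{i=1}^d(1+|k_i|)^{r\theta}|\hat f(\bk)|^{\theta}
\le\sum_{\bj\in\nd}2^{\Vert\bj\Vert_{\ell_1}r\theta}\sum_{\bk\in I_{\bj}}|\hat f(\bk)|^{\theta}.
\]

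The core of the argument is the estimate of the inner $\ell_\theta$-sum over a single block. Writing $g_{\bj}:=\sum_{\bk\in I_{\bj}}\hat f(\bk)\exp(2\pi\mathrm{i}\bk\cdot)$, the Hausdorff--Young inequality on $\tor^d$ (with constant $1$, since $1<p\le2$) gives $\big\|(\hat f(\bk))_{\bk\in I_{\bj}}\big\|_{\ell_{p'}}\le\|g_{\bj}\|_{L_p}$ with $p'=p/(p-1)\ge2\ge\theta$. As $\theta\le p'$, the inclusion $\ell_\theta\hookrightarrow\ell_{p'}$ goes the wrong way for us, so I instead apply Lemma~\ref{CSI} together with $\Vert(\hat f(\bk))_{\bk\in I_{\bj}}\Vert_{\ell_0}\le\#I_{\bj}=2^{\Vert\bj\Vert_{\ell_1}}$ (see \eqref{Vol_Ik}), obtaining
\[
\Big(\sum_{\bk\in I_{\bj}}|\hat f(\bk)|^{\theta}\Big)^{1/\theta}
\le\|g_{\bj}\|_{L_p}\,\big(2^{\Vert\bj\Vert_{\ell_1}}\big)^{\frac1\theta-\frac1{p'}}
=\|g_{\bj}\|_{L_p}\,2^{\Vert\bj\Vert_{\ell_1}\left(\frac1\theta+\frac1p-1\right)},
\]
using $\tfrac1{p'}=1-\tfrac1p$.

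Plugging this back in and collecting the powers of $2^{\Vert\bj\Vert_{\ell_1}}$ yields
\[
\|f\|_{S^{r}_{\theta}\mathcal{A}}^{\theta}
\le\sum_{\bj\in\nd}2^{\Vert\bj\Vert_{\ell_1}\left(r+\frac1p+\frac1\theta-1\right)\theta}\|g_{\bj}\|_{L_p}^{\theta}
=\|f\|_{S^{r+\frac1p+\frac1\theta-1}_{p,\theta}B}^{\theta},
\]
which is the claimed embedding, in fact with operator norm at most $1$ (and equal to $1$ by testing $f\equiv1$, exactly as in Theorem~\ref{SB_subset_SA_theta<2<p}). The only genuinely delicate point is the direction of the $\ell_\theta$--$\ell_{p'}$ comparison: the naive inclusion is useless here, so one must pay the cardinality $2^{\Vert\bj\Vert_{\ell_1}}$ of each dyadic block, and it is precisely this factor that produces the smoothness shift $\frac1p+\frac1\theta-1$. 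Everything else is routine bookkeeping; when $p=2$ the Hausdorff--Young step degenerates to Parseval's identity and (if moreover $\theta=2$) Lemma~\ref{CSI} is not even needed, the argument going through verbatim.
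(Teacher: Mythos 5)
Your proof is correct, but it takes a different route from the paper. The paper disposes of the statement in two lines by factoring the embedding as $S^{r+\frac{1}{p}-\frac{1}{2}+\frac{1}{\theta}-\frac{1}{2}}_{p,\theta}B \hookrightarrow S^{r+\frac{1}{\theta}-\frac{1}{2}}_{2,\theta}B \hookrightarrow S^{r}_{\theta}\mathcal{A}$, where the first inclusion is the known Besov embedding \cite[Lemma 3.4.1 (i)]{DTU18} and the second is exactly the $p=2$ case of Theorem \ref{SB_subset_SA_theta<2<p}. You instead redo the block computation from scratch, replacing the Parseval/monotonicity step (which forced $p\geq 2$ there) by the Hausdorff--Young inequality on each block $g_{\bj}$, and then paying the block cardinality $\#I_{\bj}=2^{\Vert\bj\Vert_{\ell_1}}$ through Lemma \ref{CSI} with exponents $\theta\leq 2\leq p'$; the exponent bookkeeping $\frac{1}{\theta}-\frac{1}{p'}=\frac{1}{\theta}+\frac{1}{p}-1$ is right, and all inequalities used (weight bound $\prod_i(1+|k_i|)\leq 2^{\Vert\bj\Vert_{\ell_1}}$ on $I_{\bj}$, Hausdorff--Young, H\"older) hold with constant $1$, so your claim that the embedding norm equals $1$ (tested on $f\equiv 1$) is a correct bonus not stated in the paper for this theorem. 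In effect your Hausdorff--Young step plays the role of the Nikolskii-type inequality hidden inside the cited DTU18 embedding: your argument is self-contained and makes the origin of the smoothness shift $\frac{1}{p}+\frac{1}{\theta}-1$ transparent, while the paper's argument is shorter given the machinery already in place and reuses Theorem \ref{SB_subset_SA_theta<2<p} rather than duplicating its computation.
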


\begin{proof}  Let us rewrite 
 \eqref{Besov_subset_Wiener_0<p,theta<2} 
 in the form:
 \be \label{Besov_subset_Wiener_0<p,theta<2_DETAILED} 
 S^{r+{\frac{1}{p}-\frac{1}{2}}+{\frac{1}{\theta}-\frac{1}{2}}}_{p,\theta}B
 \hookrightarrow S^{r+{\frac{1}{\theta}-\frac{1}{2}}}_{2,\theta}B
 \hookrightarrow S^{r}_{\theta}\mathcal{A} .
 \ee  
 
The first embedding in \eqref{Besov_subset_Wiener_0<p,theta<2_DETAILED} 
 {holds by} 
\cite[Lemma 3.4.1 (i)]{DTU18}. 

The proof of the second embedding in \eqref{Besov_subset_Wiener_0<p,theta<2_DETAILED}  was done in Theorem \ref{SB_subset_SA_theta<2<p} for $p=2$.

\end{proof} 

 \begin{theorem}\label{SW+2/p_subset_SA_p}
 If $1<p\leq 2$, $r\geq 0$ then 
 \be  \label{W_subset_Wiener_1<p<2}
  S^{r+\frac{2}{p}-1}_{p}W
  \hookrightarrow S^{r}_{p}\mathcal{A} \,.
 \ee  
  \end{theorem}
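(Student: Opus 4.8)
The plan is to obtain \eqref{W_subset_Wiener_1<p<2} by composing two inclusions already available to us: first a Jawerth-type embedding that carries the Sobolev space $S^{s}_{p}W$ into a Besov space whose \emph{integrability index equals $2$ and whose fine index equals exactly $p$}, and then Theorem \ref{SB_subset_SA_theta<2<p} (used with integrability index $2$ and fine/smoothness index $\theta=p$), which sends that Besov space into $S^{r}_{p}\mathcal{A}$. This is precisely the ``three inclusions plus Jawerth--Franke'' strategy announced in the introduction.

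Concretely, I would first assume $1<p<2$ and apply the Jawerth embedding for dominating mixed smoothness, \cite[Lem.\ 3.4.3]{DTU18} together with \cite[Rem.\ 3.4.4]{DTU18}, with source an $F$-type (Sobolev) space and the choice $p_0=p$, $p_1=2$, $s_0=r+\frac{2}{p}-1$, $s_1=r+\frac{1}{p}-\frac12$; note $S^{s_0}_{p}W=S^{s_0}_{p,2}F$ for $1<p<2$. The only thing to check is the Sobolev-type limiting relation $s_0-\frac{1}{p_0}=s_1-\frac{1}{p_1}$, which is the one-line identity $r+\frac{2}{p}-1-\frac{1}{p}=r+\frac{1}{p}-1=s_1-\frac12$. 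This yields
\[
S^{r+\frac{2}{p}-1}_{p}W \hookrightarrow S^{r+\frac{1}{p}-\frac12}_{2,p}B .
\]
Then I would invoke Theorem \ref{SB_subset_SA_theta<2<p} with its integrability index equal to $2$ and its parameter ``$\theta$'' equal to $p$ (admissible since $0<p\le 2\le 2<\infty$); because the smoothness there, $r+\frac{1}{\theta}-\frac12$, equals $r+\frac{1}{p}-\frac12$ when $\theta=p$, it gives $S^{r+\frac{1}{p}-\frac12}_{2,p}B\hookrightarrow S^{r}_{p}\mathcal{A}$. Composing the two continuous embeddings proves \eqref{W_subset_Wiener_1<p<2} for $1<p<2$. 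The endpoint $p=2$ is handled separately and trivially: there $\frac{2}{p}-1=0$ and $S^{r}_{2}W=S^{r}_{2,2}B$ by the Littlewood--Paley characterization, so the claim is just Theorem \ref{SB_subset_SA_theta<2<p} with $p=\theta=2$.

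There is no hard estimate involved; the one point requiring care — the ``main obstacle'' such as it is — is the bookkeeping of the fine index. The Jawerth embedding is used precisely because it delivers a Besov target whose fine index is $p_0=p$, which is exactly the value $\theta=p$ under which Theorem \ref{SB_subset_SA_theta<2<p} applies. The choice $p_1=2$ is then forced (Theorem \ref{SB_subset_SA_theta<2<p} requires Besov integrability index $\ge 2$, while the Jawerth embedding requires $p_1>p_0$), and the intermediate smoothness $s_1=r+\frac{1}{p}-\frac12$ is subsequently pinned down by the limiting relation and happens to coincide with the exponent demanded on the left-hand side of \eqref{W_subset_Wiener_1<p<2}.
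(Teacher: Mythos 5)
Your proposal is correct and follows essentially the same route as the paper: the Jawerth--Franke embedding \cite[Lem.\ 3.4.3]{DTU18} into $S^{r+\frac{1}{p}-\frac12}_{2,p}B$, followed by the Besov-to-Wiener embedding of Theorem \ref{SB_subset_SA_theta<2<p} with integrability index $2$ and $\theta=p$ (the paper routes this second step through Theorem \ref{SB+1/p_subset_SA}, whose proof reduces to exactly that case), with $p=2$ handled trivially.
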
 
\begin{proof} The case $p=2$ of the embedding \eqref{W_subset_Wiener_1<p<2} is obvious. Let us rewrite 
 \eqref{W_subset_Wiener_1<p<2}  {for $1<p<2$} 
 in the form:
 \be  \label{W_subset_Wiener_1<p<2_DETAILED}
 S^{r+2(\frac{1}{p}-\frac{1}{2})}_{p}W
 \hookrightarrow S^{r+{\frac{1}{p}-\frac{1}{2}}}_{2,p}B
 \hookrightarrow S^{r}_{p}\mathcal{A} \,.
 \ee 
 
 The first embedding in \eqref{W_subset_Wiener_1<p<2_DETAILED}  {holds by}  
 \cite[Lemma 3.4.3]{DTU18}. 

 The proof of the second embedding in \eqref{W_subset_Wiener_1<p<2_DETAILED}  was done in Theorem \ref{SB+1/p_subset_SA}.
 \end{proof}

\begin{theorem}\label{SA^-1/p_subset_SB_2<p,theta<infty}
 If 
  {$2\leq p<\infty$, $2\leq \theta\leq\infty$,}
 $r>0$ then
 \be  \label{SA_subset_SB_2<p,theta<infty}
  S^{r+1-\frac{1}{p}-\frac{1}{\theta}}_{\theta}\mathcal{A}
  \hookrightarrow S^r_{p,\theta}B .
 \ee 
\end{theorem}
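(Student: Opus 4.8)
The plan is to pass through the dyadic block structure that underlies both norms: for each $\bj\in\nd$ estimate the $L_p$-norm of the block $g_\bj\coloneqq\sum_{\bk\in I_\bj}\hat f(\bk)\exp(2\pi\mathrm i\bk\cdot)$ directly in terms of the $\ell_\theta$-norm of its Fourier coefficients, and then assemble the result. Write $s\coloneqq r+1-\frac1p-\frac1\theta$; note $s\ge 0$ under the hypotheses $r\ge0$, $p\ge2$, $\theta\ge2$, so the source space is well-defined. Let $p'=p/(p-1)\in(1,2]$ be the conjugate exponent, and recall from \eqref{Vol_Ik} that $\#I_\bj=2^{\Vert\bj\Vert_{\ell_1}}$ and that $\prod_{i=1}^d(1+|k_i|)\asymp 2^{\Vert\bj\Vert_{\ell_1}}$ for every $\bk\in I_\bj$, with constants depending only on $d$.

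The first step I would carry out is the (dual) Hausdorff--Young inequality on $\tor^d$, which is available because $p\ge2$ means $p'\le2$: it gives $\Vert g_\bj\Vert_{L_p}\le\big(\sum_{\bk\in I_\bj}|\hat f(\bk)|^{p'}\big)^{1/p'}$. The second step is Hölder's inequality in the form of Lemma \ref{CSI}, applied to the sequence $(\hat f(\bk))_{\bk\in I_\bj}$ with the exponent pair $(p',\theta)$, which is admissible precisely because $p'\le2\le\theta$; since this sequence is supported on a set of cardinality $2^{\Vert\bj\Vert_{\ell_1}}$ this yields
\[
  \Vert g_\bj\Vert_{L_p}\le 2^{\Vert\bj\Vert_{\ell_1}\left(\frac{1}{p'}-\frac1\theta\right)}\Big(\sum_{\bk\in I_\bj}|\hat f(\bk)|^\theta\Big)^{1/\theta}=2^{\Vert\bj\Vert_{\ell_1}\left(1-\frac1p-\frac1\theta\right)}\Big(\sum_{\bk\in I_\bj}|\hat f(\bk)|^\theta\Big)^{1/\theta}.
\]

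To finish, I would raise this to the power $\theta$, multiply by $2^{\Vert\bj\Vert_{\ell_1}r\theta}$, and sum over $\bj\in\nd$, obtaining
\[
  \Vert f\Vert_{S^r_{p,\theta}B}^\theta\le\sum_{\bj\in\nd}2^{\Vert\bj\Vert_{\ell_1}\theta\left(r+1-\frac1p-\frac1\theta\right)}\sum_{\bk\in I_\bj}|\hat f(\bk)|^\theta,
\]
and then replace $2^{\Vert\bj\Vert_{\ell_1}s}$ by $\prod_{i=1}^d(1+|k_i|)^{s}$ (up to a $d$-dependent constant) for $\bk\in I_\bj$, so that the right-hand side is comparable to $\Vert f\Vert_{S^{s}_\theta\ca}^\theta$; this is exactly the claimed continuous embedding. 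The case $\theta=\infty$ is handled by the usual modification: the $B$-sum and the $\ell_\theta$-sum become suprema over $\bj$ and over $\bk\in I_\bj$ respectively, and the same two steps go through, with Lemma \ref{CSI} replaced by the trivial bound $\Vert \hat f|_{I_\bj}\Vert_{\ell_{p'}}\le(\#I_\bj)^{1/p'}\Vert\hat f|_{I_\bj}\Vert_{\ell_\infty}$.

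I do not expect a serious obstacle; the one point worth flagging is that the factor $2^{-\Vert\bj\Vert_{\ell_1}/p}$ must be extracted from the dual Hausdorff--Young inequality rather than from the crude chain $\Vert g_\bj\Vert_{L_p}\le\Vert g_\bj\Vert_{L_\infty}\le\Vert\hat f|_{I_\bj}\Vert_{\ell_1}$, which would only produce the exponent $1-\frac1\theta$ and hence a weaker embedding with source smoothness $r+1-\frac1\theta$. The two hypotheses $p\ge2$ and $\theta\ge2$ are used exactly where they are needed: $p'\le2$ makes Hausdorff--Young applicable, and $p'\le\theta$ makes the Hölder step (Lemma \ref{CSI}) applicable.
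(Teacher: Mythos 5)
Your proposal is correct. The two hypotheses are used exactly where you say they are, the exponent bookkeeping checks out ($\frac{1}{p'}-\frac{1}{\theta}=1-\frac1p-\frac1\theta$), and since $r+1-\frac1p-\frac1\theta\ge r\ge 0$ the source space is legitimate; the final weight comparison $2^{\Vert\bj\Vert_{\ell_1}}\asymp\prod_{i=1}^d(1+|k_i|)$ on $I_\bj$ only costs a $d$-dependent constant, which is all the theorem claims (unlike Theorem \ref{SB_subset_SA_theta<2<p}, no norm-one assertion is made here). Your route differs from the paper's: the paper factors the embedding through the intermediate space $S^{r-(\frac1p-\frac12)}_{2,\theta}B$, proving the first step $S^{r+1-\frac1p-\frac1\theta}_{\theta}\ca\hookrightarrow S^{r-(\frac1p-\frac12)}_{2,\theta}B$ by Parseval plus block-wise H\"older with exponent $\theta/2$ (this is where $\theta\ge2$ enters), and then invoking the known Besov embedding \cite[Lemma 3.4.1 (i)]{DTU18} (a Nikolskii-type inequality lifting $L_2$ to $L_p$, where $p\ge2$ enters). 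You instead do everything in one block-wise step, replacing Parseval-plus-Nikolskii by the dual Hausdorff--Young inequality $\Vert g_\bj\Vert_{L_p}\le\Vert\hat f|_{I_\bj}\Vert_{\ell_{p'}}$ and then applying Lemma \ref{CSI} with the pair $(p',\theta)$; the cardinality exponents produced by the two arguments coincide, as they must. What your version buys is self-containedness (no citation of the external Besov embedding) and a single transparent estimate per block, at the price of invoking Hausdorff--Young, which is itself an interpolation result; the paper's version has the mild advantage of reusing machinery already cited elsewhere in the text and of making the intermediate $L_2$-based Besov space explicit, which is the form in which the first step is recycled in the proof of Theorem \ref{SA-2/p_subset_SW_p}. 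Your treatment of $\theta=\infty$ by the trivial $\ell_\infty$ bound is also fine.
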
  

\begin{proof}
  Let us rewrite 
 \eqref{SA_subset_SB_2<p,theta<infty}
 in the form:
 \be  \label{SA_subset_SB_2<p,theta<infty_DETAILED}
 S^{r-(\frac{1}{p}-\frac{1}{2})-(\frac{1}{\theta}-\frac{1}{2})}_{\theta}\mathcal{A}
 \hookrightarrow S^{r-(\frac{1}{p}-\frac{1}{2})}_{2,\theta}B
 \hookrightarrow S^r_{p,\theta}B .
 \ee 

 { Let $\sigma:=r-(\frac{1}{p}-\frac{1}{2})$. Then the } proof of the first embedding in \eqref{SA_subset_SB_2<p,theta<infty_DETAILED} 
 is based on using the Hölder inequality for  {$1\leq   \frac{\theta}{2}=:\tau \leq\infty$} 
  {and \eqref{Vol_Ik}}:  
 \begin{align}
 \begin{split}
\Vert f  \Vert_{S^{ \, {\sigma} }_{2,\theta}B } 
    & =
     {  
     \bigg( \sum\limits_{\bj \in \mathbb{N}_0^d} 2^{ {\Vert \bj \Vert_{\ell_1}}   {\sigma}  \theta} 
    \Big\Vert \sum_{\bk\in I_{\bj} } \hat{f}(\bk)\exp(2\pi \mathrm{i} \bk \cdot)\Big\Vert_{L_2}^\theta \bigg)^{\frac{1}{\theta}} 
    }
    \\
    & = \bigg( \sum_{ {\bj} \in \nd } 2^{ {\Vert \bj \Vert_{\ell_1}}  {\sigma}  \theta} \big(\sum_{\bk\in I_{\bj} }  { 1 \cdot } |\hat{f}(\bk)|^2  \big)^\frac{\theta}{2} \bigg)^\frac{1}{\theta} \\
& \leq \bigg( \sum_{ {\bj} \in \nd } 2^{  {\Vert \bj \Vert_{\ell_1}}  {\sigma}  \theta } \bigg( 
  {
\Big(\sum_{\bk\in I_{\bj} } 1^\frac{\theta}{\theta-2} \Big)^\frac{\theta-2}{\theta}
        }
  {        
\Big(\sum_{\bk\in I_{\bj}  } \big(|\hat{f}(\bk)|^2\big)^\frac{\theta}{2} \Big)^\frac{2}{\theta}  
        }
\bigg)^\frac{\theta}{2} \bigg)^\frac{1}{\theta} \\
&  {=} \, \bigg( \sum_{ {\bj} \in \nd } 2^{ {\Vert \bj \Vert_{\ell_1}}   {\sigma} \theta} 
  {
\big(\#I_{\bj}\big)^{\frac{\theta}{2}-1} 
        }
\sum_{\bk\in I_{\bj} } |\hat{f}(\bk)|^{\theta}   \bigg)^\frac{1}{\theta} \\
& \asymp \bigg( \sum_{\bk\in\Z} \prod_{i=1}^d (1+|k_i|)^{ ( \, {\sigma} -(\frac{1}{\theta}-\frac{1}{2})) \theta}|\hat{f}(\bk)|^\theta \bigg)^\frac{1}{\theta} \\
& = \big\Vert f  \big\Vert_{S^{ \, {\sigma} -(\frac{1}{\theta}-\frac{1}{2})}_{\theta}\ca} .
 \end{split}
 \end{align}

 The second embedding in \eqref{SA_subset_SB_2<p,theta<infty_DETAILED}  {holds by}  
 \cite[Lemma 3.4.1 (i)]{DTU18}. 
 \end{proof}

 \begin{theorem}\label{SA-2/p_subset_SW_p}
 If $2\leq p<\infty$, $r\geq 0$ then
 \be  \label{Wiener_subset_W_2<p<infty}
 S^{r+1-\frac{2}{p}}_{p}\mathcal{A}
 \hookrightarrow S^r_{p}W \,.
 \ee
 \end{theorem}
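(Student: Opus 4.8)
The plan is to factor the asserted inclusion through an intermediate Besov space with mixed smoothness, exactly as for Theorem \ref{SW+2/p_subset_SA_p}, combining the Wiener-into-Besov estimate that already appears inside the proof of Theorem \ref{SA^-1/p_subset_SB_2<p,theta<infty} with the Franke--Jawerth embedding.

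First I would dispose of the case $p=2$, where the statement is trivial: by Parseval (i.e.\ Fubini applied to the square function in \eqref{def_Sobolev_norm_L-P}) one has $\Vert f\Vert_{S^r_2 W}\asymp\big(\sum_{\bk\in\Z}\prod_{i=1}^d(1+|k_i|)^{2r}|\hat{f}(\bk)|^2\big)^{1/2}=\Vert f\Vert_{S^r_2\mathcal{A}}$, so for $p=2$ the spaces $S^{r+1-2/p}_p\mathcal{A}=S^r_2\mathcal{A}$ and $S^r_pW=S^r_2W$ coincide up to equivalent norms.

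For $2<p<\infty$ I would rewrite \eqref{Wiener_subset_W_2<p<infty} as the two-step chain
\[
S^{r+1-\frac2p}_{p}\mathcal{A}\;\hookrightarrow\;S^{r+\frac12-\frac1p}_{2,p}B\;\hookrightarrow\;S^r_pW .
\]
The first embedding is exactly the first embedding in \eqref{SA_subset_SB_2<p,theta<infty_DETAILED} from the proof of Theorem \ref{SA^-1/p_subset_SB_2<p,theta<infty}, specialised to $\theta=p$ (admissible as $2\le p<\infty$) and to the smoothness $\sigma=r+\frac12-\frac1p$, using that then $\sigma-\big(\frac1\theta-\frac12\big)=r+1-\frac2p$; since that step consists of a single application of Hölder's inequality with exponent $\tau=\theta/2\ge 1$ on each block $I_{\bj}$ together with $\#I_{\bj}=2^{\Vert\bj\Vert_{\ell_1}}$ (see \eqref{Vol_Ik}), nothing new must be verified. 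The second embedding is the Franke--Jawerth embedding $S^{s_0}_{p_0,p_1}B\hookrightarrow S^{s_1}_{p_1}W$, valid when $p_0<p_1$ and $s_0-\frac1{p_0}=s_1-\frac1{p_1}$, see \cite[Lemma 3.4.3]{DTU18} and \cite[Remark 3.4.4]{DTU18}; here $p_0=2<p=p_1$, $s_0=r+\frac12-\frac1p$, $s_1=r$, and indeed $s_0-\frac12=r-\frac1p=s_1-\frac1p$. Note that the Besov fine index $p_1=p$ is precisely the one for which this half of Franke--Jawerth is available, and that for $r=0$ the right-hand side is $S^0_pW=L_p$, consistent with Definition \ref{def2Sob_L-P}.

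Because the argument merely assembles two embeddings that are already at hand, there is no substantial obstacle; the only point demanding attention is the bookkeeping of the smoothness exponents and, in particular, invoking Franke--Jawerth in the correct direction --- raising the integrability from $2$ to $p$, with Besov fine index equal to the target integrability $p$ and $p_0=2<p=p_1$ --- so that the chain closes for all $2<p<\infty$ and all $r\ge 0$ with no free parameters left.
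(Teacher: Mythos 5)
Your proposal is correct and follows essentially the same route as the paper: dispose of $p=2$ directly, then factor the embedding as $S^{r+1-\frac2p}_{p}\mathcal{A}\hookrightarrow S^{r+\frac12-\frac1p}_{2,p}B\hookrightarrow S^r_pW$, using the H\"older-based Wiener-to-Besov step from Theorem \ref{SA^-1/p_subset_SB_2<p,theta<infty} (with $\theta=p$) and the Franke--Jawerth embedding for the second step. The only discrepancy is bibliographic: for the Franke half the paper's proof cites \cite[Lemma 3.4.2]{DTU18} rather than \cite[Lemma 3.4.3, Remark 3.4.4]{DTU18}, but the embedding you state (with $p_0=2<p$ and $s_0-\frac12=r-\frac1p$) is the correct one.
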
 

\begin{proof}
 {The case $p=2$ of the embedding \eqref{Wiener_subset_W_2<p<infty} is obvious.}
 Let us rewrite \eqref{Wiener_subset_W_2<p<infty}  {for $2<p<\infty$}
 in the form:
 \be  \label{Wiener_subset_W_2<p<infty_DETAILED}
 S^{r+2(\frac{1}{2}-\frac{1}{p})}_{p}\mathcal{A}
 \hookrightarrow S^{r+(\frac{1}{2}-\frac{1}{p})}_{2,p}B
 \hookrightarrow S^r_{p}W \, .
 \ee

 The  first embedding 
 in \eqref{Wiener_subset_W_2<p<infty_DETAILED} was proven in 
 Theorem \ref{SA^-1/p_subset_SB_2<p,theta<infty}.

 The second embedding 
 in \eqref{Wiener_subset_W_2<p<infty_DETAILED}  {holds by}  \cite[Lemma 3.4.2]{DTU18}. 
\end{proof}

\section{Asymptotic bounds for sampling recovery of  functions}

\subsection{Weighted Wiener spaces} \label{asymp_W}
As mentioned in the introduction, the Kolmogorov widths of mixed Wiener spaces have already been studied in \cite{NNS22}, where it was shown that they behave like
\begin{equation}
d_m\big( S^{r}_{1}\mathcal{A}\big)_{L_2} \asymp m^{- r} \log^\ast(m)^{(d-1) r }.
\end{equation}
Kolmogorov/linear widths form a lower bound on the linear sampling widths if the error is measured in $L_2$. This is a trivial consequence of the Definition \ref{defi_l_samp}. Hence, we also know
\begin{equation} \label{lsn_low}
\varrho^{\operatorname{lin}}_m\big(S^{r}_{1}\mathcal{A}\big)_{L_2} \gtrsim m^{- r} \log^\ast(m)^{(d-1) r}.
\end{equation}
The Gelfand widths of weighted Wiener spaces were studied in \cite[Theorem 3.2, Corollary 3.3]{Mo23}    {for \(0 < \theta \leq 2\) and \(r >  \big(1-\frac{1}{\theta}\big)_+\)}, they behave like 
\begin{equation} \label{nlsn_low}
c_m\big( S^{r}_{\theta}\mathcal{A}\big)_{L_2} \asymp m^{- (r+{\frac{1}{\theta}}-\frac{1}{2})} \log^\ast(m)^{(d-1) r }.
\end{equation}

The Gelfand widths \(c_m\), similarly to the Kolmogorov widths \(d_m\), give a lower bound, but on the non-linear sampling widths, see e.g.\ \cite[Lemma B.1]{JUV23}. We therefore get
\begin{equation}
\varrho_m\big(S^{r}_{\theta}\mathcal{A}\big)_{L_2} \gtrsim m^{- (r+{\frac{1}{\theta}}-\frac{1}{2})} \log^\ast(m)^{(d-1) r }.
\end{equation}

Using Proposition \ref{samp_Lp} we can get an upper bound on the non-linear sampling widths as well.

\begin{theorem}\label{nlsn_up}
For \(0 < \theta \leq \infty\), \(2 \leq q \leq \infty\) and \( r > \big(1 - \frac{1}{\theta}\big)_+\) it holds

\begin{equation}
 {\varrho_m\big(S^{r}_{\theta}\ca\big)_{L_q} \lesssim m^{-( r+{\frac{1}{\theta}}+\frac{1}{q}-1)} \log^\ast(m)^{(d-1) r + 3 ( r + {\frac{1}{\theta}+\frac{1}{q}-1})+ \frac{1}{2}}.}
\end{equation}
In particular we get for \(\theta=1\) and \(q=2\),
\begin{equation}
\varrho_m\big(S^{r}_{1}\ca\big)_{L_2} \lesssim m^{-( r+\frac{1}{2})} \log^\ast(m)^{(d-1) r +  {3r+2} } .
\end{equation}
\end{theorem}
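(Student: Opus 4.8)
The plan is to apply Proposition \ref{samp_Lp} with parameters $n$ and $M$ chosen as functions of the sample budget $m$, and to estimate the two terms on its right-hand side by the bounds already available for $S^r_\theta\ca$. Since $r>(1-1/\theta)_+$, the same H\"older estimate that yields $S^r_\theta\ca\hookrightarrow\ca\hookrightarrow C(\tor^d)$ applies, so all functions in the unit ball of $S^r_\theta\ca$ are admissible inputs in Proposition \ref{samp_Lp}, and it gives, for $m\ge\lceil Cnd\log^\ast(n)^2\log^\ast(M)\rceil$,
\[
\varrho_m(S^r_\theta\ca)_{L_q}\le C\,n^{1/2-1/q}\Big(\sigma_n(S^r_\theta\ca)_{L_\infty}+E_{[-M,M]^d}(S^r_\theta\ca)_{L_\infty}\Big).
\]
For the first term I would invoke the $q=\infty$ part of Theorem \ref{sigma_m_A_0<p<infty,q_bigger=_2}, which is a valid upper bound for every $d\ge1$:
\[
\sigma_n(S^r_\theta\ca)_{L_\infty}\lesssim n^{-(r+\frac1\theta-\frac12)}\log^\ast(n)^{(d-1)r+\frac12}.
\]

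For the truncation error, write $f_M$ for the partial sum of $f$ over the frequencies in $[-M,M]^d$; then $E_{[-M,M]^d}(f)_{L_\infty}\le\|f-f_M\|_{\ca}=\sum_{\|\bk\|_{\ell_\infty}>M}|\hat f(\bk)|$. Splitting off the weight $\omega_\bk:=\prod_{i=1}^d(1+|k_i|)$, and noting $\omega_\bk>M$ on this index set, the case $\theta\le1$ follows from $\ell_\theta\hookrightarrow\ell_1$ and gives $M^{-r}\|f\|_{S^r_\theta\ca}$, while the case $\theta>1$ follows from H\"older's inequality with the conjugate exponent $\theta'$ once one knows $\sum_{\|\bk\|_{\ell_\infty}>M}\omega_\bk^{-r\theta'}<\infty$; the latter series is controlled by a union bound over which coordinate exceeds $M$ together with the one-dimensional tail estimate $\sum_{|k|>M}(1+|k|)^{-r\theta'}\asymp M^{1-r\theta'}$, and is \emph{finite precisely because} $r\theta'>1\Leftrightarrow r>1-\frac1\theta$. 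In either case one arrives at
\[
E_{[-M,M]^d}(S^r_\theta\ca)_{L_\infty}\lesssim M^{-(r-(1-1/\theta)_+)},
\]
with strictly positive exponent thanks to the hypothesis $r>(1-1/\theta)_+$.

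It remains to fix the parameters. Put $\gamma:=\big(r+\tfrac1\theta-\tfrac12\big)\big/\big(r-(1-1/\theta)_+\big)\ge1$ and take $M=M(n):=\lceil n^\gamma\rceil$; this choice is made exactly so that $n^{1/2-1/q}E_{[-M,M]^d}(S^r_\theta\ca)_{L_\infty}\lesssim n^{-(r+\frac1\theta+\frac1q-1)}$ is dominated by the $\sigma_n$-contribution, while $\log^\ast(M(n))\asymp\log^\ast(n)$ because $M(n)$ is a fixed power of $n$. Then I would choose $n$ as the largest integer with $\lceil Cnd\log^\ast(n)^2\log^\ast(M(n))\rceil\le m$; since $\log^\ast(M(n))\asymp\log^\ast(n)$ this forces $n\asymp m\log^\ast(m)^{-3}$ and $\log^\ast(n)\asymp\log^\ast(m)$. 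Substituting into the displayed bound for $\varrho_m$ and using $r+\frac1\theta+\frac1q-1>0$ (which again follows from $r>(1-1/\theta)_+$) yields
\[
\varrho_m(S^r_\theta\ca)_{L_q}\lesssim n^{-(r+\frac1\theta+\frac1q-1)}\log^\ast(n)^{(d-1)r+\frac12}\asymp m^{-(r+\frac1\theta+\frac1q-1)}\log^\ast(m)^{(d-1)r+3(r+\frac1\theta+\frac1q-1)+\frac12},
\]
the exponent $3(r+\frac1\theta+\frac1q-1)$ being the cost of replacing $n$ by $m\log^\ast(m)^{-3}$. A monotonicity argument in $m$ removes the restriction to the sequence of budgets just constructed, and specialising $\theta=1$, $q=2$ gives the stated consequence.

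The step I expect to be the main obstacle is the uniform estimate of the tail Fourier sum $\sum_{\|\bk\|_{\ell_\infty}>M}|\hat f(\bk)|$ over the unit ball of $S^r_\theta\ca$ --- in particular recognising that it is precisely the hypothesis $r>(1-1/\theta)_+$ that makes the dual series $\sum\omega_\bk^{-r\theta'}$ summable. By contrast, the remaining bookkeeping (choosing $M$ as a fixed power of $n$ so that $\log^\ast(M)\asymp\log^\ast(n)$ and the truncation term is absorbed into the $\sigma_n$-term, then inverting the relation $m\asymp nd\log^\ast(n)^3$) is routine.
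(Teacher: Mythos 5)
Your proposal is correct and follows essentially the same route as the paper: apply Proposition \ref{samp_Lp}, bound $\sigma_n(S^r_\theta\ca)_{L_\infty}$ via the $q=\infty$ part of Theorem \ref{sigma_m_A_0<p<infty,q_bigger=_2}, choose $M$ as a fixed power of $n$ so that the truncation term is absorbed and $\log^\ast(M)\asymp\log^\ast(n)$, and then convert $n$ into $m$ through the oversampling condition \eqref{m_vs_n}, which is exactly where the extra factor $\log^\ast(m)^{3(r+\frac1\theta+\frac1q-1)}$ comes from. Your explicit H\"older estimate of the Fourier tail, giving $E_{[-M,M]^d}(S^r_\theta\ca)_{L_\infty}\lesssim M^{-(r-(1-1/\theta)_+)}$ for $\theta>1$ with the exponent of $M$ adjusted accordingly, is if anything slightly more careful than the paper's one-line appeal to the embedding $S^r_\theta\ca\hookrightarrow\ca$ yielding $M^{-r}$; since $M$ remains a fixed power of $n$, this refinement does not alter the final rate.
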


\begin{proof}
Choose \(M= \big\lceil n^{(r+{\frac{1}{\theta}-\frac{1}{2}})\gamma^{-1}}\big\rceil\) and \(n \in \N\) maximal according to \eqref{m_vs_n} i.e. 
\begin{equation} \label{n}
n = \Big\lfloor C' m \log^\ast(m)^{-3} (r + \frac{1}{\theta} - \frac{1}{2})^{-1}\gamma  \Big\rfloor,
\end{equation}
where  \(C'\) depends only on \(C\) from \eqref{m_vs_n}, \(\theta\) and \(r\) and \(\gamma = r\) for \(\theta\leq 1\) and \(\gamma = r + \frac{1}{\theta} - 1\) otherwise.
Then we get from Proposition \ref{samp_Lp},
\begin{align}
\begin{split}
    \varrho_m\big(S^{r}_{\theta}\ca\big)_{L_q} & \leq C n^{1/2-1/q}\left( \sigma_n\big( S^{r}_{\theta}\ca\big)_{L_\infty} + E_{[-M,M]^d}\big(S^{r}_{\theta}\ca\big)_{L_\infty}\right)\\
    & \lesssim C n^{1/2-1/q} (n^{-(r+{\frac{1}{\theta}-\frac{1}{2}})} \log^\ast(n)^{(d-1)r+\frac{1}{2}} + M^{-\gamma}) \\
    & \leq  C n^{-(r+{\frac{1}{\theta}+\frac{1}{q}-1})} \log^\ast(n)^{(d-1)r+\frac{1}{2}} \\
    & \leq  C m^{-(r+{\frac{1}{\theta}+\frac{1}{q}-1})} \log^\ast(m)^{(d-1)r+\frac{1}{2} + 3(r+{\frac{1}{\theta}+\frac{1}{q}-1})} {,}
    \end{split}
\end{align}
where we use Theorem \ref{sigma_m_A_0<p<infty,q_bigger=_2} and Lemma \ref{E_M} in the second line.

\end{proof}

This result generalizes \cite[Corollary 4.4 and Remark 4.5 (i)]{JUV23} in multiple directions. Not only do we consider the more general class of weighted Wiener  {(\(\theta\) different from 1)} spaces but also reduce the condition  {on \(r\) from \(r > 1/2\) to \(r>(1 - 1/\theta)_+\). In the setting where \(\theta = 1\), as in \cite{JUV23} this reduces to \(r > 0\). In addition} we also consider the situation where the error is measured in Lebesgue spaces other than \(L_2\).

We can still improve this result slightly by using Theorem \ref{samp_A1} and Theorem \ref{A2A} instead of Proposition \ref{samp_Lp}. Since we avoid the \(L_\infty\) norm, we can get rid of a factor \(\sqrt{\log^\ast(m)}\) that appears in Theorem \ref{sigma_m_A_0<p<infty,q_bigger=_2}. 

\begin{cor}\label{nlsn_up_good}
For \(0< \theta  {\, \leq \,} \infty\), \(2 \leq q \leq \infty \) and  {\(r > \big(1 - \frac{1}{\theta}\big)_+\)} we get 
\begin{equation} 
 {\varrho_{m}\big(S_\theta^r \ca \big)_{L_q} \leq Cm^{-( r+{\frac{1}{\theta}+\frac{1}{q}-1})} \log^\ast(m)^{(d-1) r + 3 ( r + {\frac{1}{\theta}+\frac{1}{q}-1})}.}
\end{equation}
\end{cor}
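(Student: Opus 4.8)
The strategy is a direct analogue of the proof of Theorem \ref{nlsn_up}, but now feeding in Theorem \ref{samp_A1} (the \(\ca\)-version of the recovery bound, weighted by \(n^{-1/2}\)) in place of Proposition \ref{samp_Lp}, together with the sharp bound on \(\sigma_n(S^r_\theta\ca)_\ca\) coming from Theorem \ref{A2A} with \(\eta = 1\). First I would fix \(n \in \N\) and choose the truncation parameter \(M = \big\lceil n^{(r + \frac{1}{\theta} - \frac{1}{2})\frac{1}{r}} \big\rceil\) exactly as in the proof of Theorem \ref{nlsn_up}, so that \(M^{-r} \asymp n^{-(r + \frac{1}{\theta} - \frac{1}{2})}\). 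Taking the supremum over the unit ball of \(S^r_\theta\ca\) and the infimum over admissible sampling schemes, Theorem \ref{samp_A1} yields
\begin{align}
\begin{split}
\varrho_m\big(S^r_\theta\ca\big)_{L_q} &\leq C n^{1/2-1/q}\Big( n^{-1/2}\sigma_n\big(S^r_\theta\ca\big)_{\ca} + E_{[-M,M]^d}\big(S^r_\theta\ca\big)_{L_\infty}\Big).
\end{split}
\end{align}

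Next I would insert the two ingredients. From Theorem \ref{A2A} (case \(\eta = 1\), which is covered since \(r > (1 - \frac{1}{\theta})_+\)) we have \(\sigma_n(S^r_\theta\ca)_\ca \asymp n^{-(r + \frac{1}{\theta} - 1)}\log^\ast(n)^{(d-1)r}\), so the first term contributes \(n^{1/2 - 1/q} \cdot n^{-1/2} \cdot n^{-(r + \frac{1}{\theta} - 1)}\log^\ast(n)^{(d-1)r} = n^{-(r + \frac{1}{\theta} + \frac{1}{q} - 1)}\log^\ast(n)^{(d-1)r}\). For the second term, the embedding \(\ca^r_\theta \hookrightarrow \ca \hookrightarrow L_\infty\) (a consequence of Theorem \ref{A2A}, as noted in the proof of Theorem \ref{nlsn_up}) gives \(E_{[-M,M]^d}(S^r_\theta\ca)_{L_\infty} \lesssim M^{-r} \lesssim n^{-(r + \frac{1}{\theta} - \frac{1}{2})}\), and multiplying by the prefactor \(n^{1/2 - 1/q}\) yields \(n^{-(r + \frac{1}{\theta} + \frac{1}{q} - 1)}\), which is dominated by the first contribution. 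Hence \(\varrho_m(S^r_\theta\ca)_{L_q} \lesssim n^{-(r + \frac{1}{\theta} + \frac{1}{q} - 1)}\log^\ast(n)^{(d-1)r}\); note that, compared with Theorem \ref{nlsn_up}, the extra \(\sqrt{\log^\ast}\) has disappeared precisely because we used the \(\ca\)-norm bound (which has no \(\sqrt{\log^\ast}\) loss) rather than the \(L_\infty\)-norm bound.

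Finally I would convert from \(n\) to the actual number of samples \(m\). By the admissibility condition \(m \geq \lceil Cnd\log^\ast(n)^2\log^\ast(M)\rceil\) from Theorem \ref{samp_A1}, and since \(\log^\ast(M) \asymp \log^\ast(n)\) for our choice of \(M\), we may take \(m \asymp n\log^\ast(n)^3\), so that \(n \asymp m / \log^\ast(m)^3\) up to constants and \(\log^\ast(n) \asymp \log^\ast(m)\). Substituting gives \(n^{-(r + \frac{1}{\theta} + \frac{1}{q} - 1)} \lesssim m^{-(r + \frac{1}{\theta} + \frac{1}{q} - 1)}\log^\ast(m)^{3(r + \frac{1}{\theta} + \frac{1}{q} - 1)}\), and collecting all logarithmic factors yields the claimed bound
\begin{equation}
\varrho_m\big(S^r_\theta\ca\big)_{L_q} \leq C m^{-(r + \frac{1}{\theta} + \frac{1}{q} - 1)}\log^\ast(m)^{(d-1)r + 3(r + \frac{1}{\theta} + \frac{1}{q} - 1)}.
\end{equation}
There is no real obstacle here — the argument is a routine bookkeeping exercise once Theorems \ref{samp_A1} and \ref{A2A} are in hand; the only point requiring a moment's care is checking that the exponent \(r + \frac{1}{\theta} + \frac{1}{q} - 1\) in the logarithmic power from the \(n\)-to-\(m\) conversion is nonnegative (which it is under \(r > (1 - \frac{1}{\theta})_+\) and \(q \geq 2\)), so that the substitution \(n \asymp m/\log^\ast(m)^3\) only enlarges the bound and the inequality goes in the right direction.
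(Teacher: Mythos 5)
Your proposal is correct and follows essentially the same route as the paper's own proof: the same choice \(M=\lceil n^{(r+\frac{1}{\theta}-\frac{1}{2})/r}\rceil\), Theorem \ref{samp_A1} combined with Theorem \ref{A2A} for \(\eta=1\), the bound \(E_{[-M,M]^d}(S^r_\theta\ca)_{L_\infty}\lesssim M^{-r}\), and the conversion \(n\asymp m/\log^\ast(m)^3\) via \eqref{m_vs_n} producing the extra factor \(\log^\ast(m)^{3(r+\frac{1}{\theta}+\frac{1}{q}-1)}\). Your added remark on the sign of the exponent in the \(n\)-to-\(m\) substitution is a sensible detail the paper leaves implicit.
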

 {
\begin{proof}
Let \(M,n\) as in Theorem \ref{nlsn_up} above. Then we get from Theorem \ref{samp_A1} together with Theorem \ref{A2A} (with \(\eta =1\))
    \begin{align}
        \begin{split}           \varrho_m\big(S^{r}_{\theta}\ca\big)_{L_q} & \leq C n^{1/2-1/q}\left(n^{-1/2} \sigma_n\big(S^{r}_{\theta}\big)_{\ca} + E_{[-M,M]^d}\big(S^{r}_{\theta}\ca\big)_{L_\infty}\right) \\
    & \leq C n^{1/2-1/q} (n^{-(r+{\frac{1}{\theta}-1+\frac{1}{2}})} \log^\ast(n)^{(d-1)r} + M^{-\gamma}) \\
    & \leq  C n^{-(r+{\frac{1}{\theta}+\frac{1}{q}-1})} \log^\ast(n)^{(d-1)r}\\
    & \leq  C m^{-(r+{\frac{1}{\theta}+\frac{1}{q}-1})} \log^\ast(m)^{(d-1)r + 3(r+{\frac{1}{\theta}+\frac{1}{q}-1})}.
        \end{split}
    \end{align}
\end{proof}
}

\begin{rem}\label{sqrtlog}
    Now we compare the lower bounds on the linear sampling widths \eqref{lsn_low} and the upper bounds on the non-linear sampling widths from Corollary \ref{nlsn_up_good}. 
    We see that non-linear methods perform better than linear ones by a rate of \(m^{-\frac{1}{2}} \log^\ast(m)^{3r + \frac{3}{2}}\) for recovery of functions from mixed Wiener {spaces}  {(\(\theta =1\))}, with the error measured in \(L_2\). The main rate of this difference is independent of \(r\), and since our result, unlike the previous ones in \cite{JUV23}, holds for all \(r>0\)  {(in case of \(q=2\) and \(\theta = 1\))}, we know for situations with very low smoothness that the error of linear methods decays very slowly, while the error of suitable non-linear algorithms always decays with rate at least  \(1/2\). 
    The additional logarithmic term \(\log^\ast(m)^{3r + \frac{3}{2}}\) does not depend on the dimension \(d\) and is the consequence of the conditions on \(m\) in Theorem \ref{samp_A1}.
\end{rem}

\subsection{Besov-Sobolev spaces}

\begin{theorem}\label{sigma_m_Besov_in_Wiener} 
Let 
 $1<p\leq 2$, $0<\theta\leq 2$.   

 1) For $\theta \leq \eta\leq\infty$ and $r>\frac{1}{p}+\frac{1}{\theta}-1$,
   it holds
\begin{equation}\label{sigma_m(S^r_p,theta_B)_A_q}
   \sigma_{m}(S^{r}_{p,\theta}B)_{\mathcal{A}_\eta} \asymp
   m^{-(r+1 -\frac{1}{p}- \frac{1}{\eta})}
   \log^\ast(m)^{(d-1)(r+1 -\frac{1}{p}- \frac{1}{\theta})}  .
\end{equation}

 2)
 In the additional case where $r=\frac{1}{p}+\frac{1}{\theta}-1$ and  
 $\theta < \eta\leq\infty$,
 it holds
\begin{equation}\label{sigma_m(S^r=_2,theta_B)_{L_2}}
   \sigma_{m}(S^{r}_{p,\theta}B)_{\mathcal{A}_\eta} \asymp
   m^{-(\frac{1}{\theta}- \frac{1}{\eta} )} .
\end{equation}
\end{theorem}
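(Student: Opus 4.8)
The plan is to combine the embedding results from Section 4 with the sharp best $m$-term bounds for weighted Wiener spaces from Theorem \ref{A2A}, reducing everything to the case already understood. For the \emph{upper bound} in part 1), I would use Theorem \ref{SB+1/p_subset_SA}, which states $S^{r+1/p+1/\theta-1}_{p,\theta}B \hookrightarrow S^{0}_{\theta}\ca$ (apply it with the smoothness parameter of $S^r_\theta\ca$ taken to be $r - (1/p + 1/\theta - 1) \geq 0$, i.e.\ set $\rho \coloneqq r + 1 - 1/p - 1/\theta$ so that $S^r_{p,\theta}B = S^{\rho + 1/p + 1/\theta - 1}_{p,\theta}B \hookrightarrow S^{\rho}_{\theta}\ca$, which requires $\rho \geq 0$, i.e.\ $r \geq 1/p + 1/\theta - 1$). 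Then feed $S^{\rho}_\theta\ca$ into Theorem \ref{A2A} with the measuring space $\ca_\eta = \ca_{\eta}$; the hypothesis $r > (1/\eta - 1/\theta)_+$ of Theorem \ref{A2A} translates into $\rho > (1/\eta - 1/\theta)_+$, which, since $\eta \geq \theta$ forces $(1/\eta - 1/\theta)_+ = 0$, is exactly $\rho > 0$, i.e.\ $r > 1/p + 1/\theta - 1$. This yields
\[
\sigma_m(S^r_{p,\theta}B)_{\ca_\eta} \leq \sigma_m(S^{\rho}_\theta\ca)_{\ca_\eta} \lesssim m^{1/\eta - 1/\theta - \rho}\log^\ast(m)^{(d-1)\rho} = m^{-(r + 1 - 1/p - 1/\eta)}\log^\ast(m)^{(d-1)(r+1-1/p-1/\theta)},
\]
which is the claimed rate.

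For the \emph{lower bound} in part 1), the natural strategy is a fooling-function argument analogous to the one at the end of the proof of Theorem \ref{sigma_m_A_0<p<infty,q_bigger=_2} (or of Theorem \ref{A2A}): place mass on a single step-hyperbolic layer $\hyp_n$ normalized to have unit $S^r_{p,\theta}B$-norm, then bound from below the best $m$-term error in $\ca_\eta$ by pairing against a suitable $\ell_{\eta'}$-normalized test sequence (for $\eta \leq 2$) or directly estimating the $\ell_\eta$-tail after removing $m$ coordinates. One computes that a function $f = c\sum_{\bk \in \hyp_n} e_\bk$ with the right constant $c \asymp 2^{-n(r+1/p-1/p)}\cdots$ — more carefully, $\Vert f\Vert_{S^r_{p,\theta}B} \asymp c\, 2^{nr}(\#\hyp_n)^{1/p} \asymp c\, 2^{nr} 2^{n/p} n^{(d-1)/p}$, so $c \asymp 2^{-n(r+1/p)} n^{-(d-1)/p}$ — and then $\sigma_{C_n/2}(f)_{\ca_\eta} \gtrsim c\,(\#\hyp_n)^{1/\eta} \asymp 2^{-n(r+1/p)} n^{-(d-1)/p} 2^{n/\eta} n^{(d-1)/\eta}$; with $m \asymp 2^n n^{d-1}$ one checks this equals the asserted rate (the exponent of $\log^\ast m$ works out to $(d-1)(r + 1 - 1/p - 1/\theta)$ after substituting $2^n \asymp m \log^\ast(m)^{1-d}$, using the same bookkeeping as in \eqref{weights}). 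Alternatively — and more cleanly — one can obtain the lower bound by the reverse route: Theorem \ref{A2A}'s lower bound is proved via an explicit polynomial $t$ on layer $\hyp_n$ that already lies on the unit sphere of $S^\rho_\theta\ca$; one verifies this same $t$ has $\Vert t\Vert_{S^r_{p,\theta}B} \lesssim 1$ (using $\Vert\cdot\Vert_{L_p} \leq \Vert\cdot\Vert_{L_\infty} \leq \Vert\cdot\Vert_{\ca}$ on each block together with the block count), hence $\sigma_m(S^r_{p,\theta}B)_{\ca_\eta} \gtrsim \sigma_m(t)_{\ca_\eta} \gtrsim$ (the Theorem \ref{A2A} lower bound).

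For part 2), the endpoint $r = 1/p + 1/\theta - 1$, the embedding $S^r_{p,\theta}B \hookrightarrow \ca_\theta$ still holds (it is the $\rho = 0$ case of Theorem \ref{SB+1/p_subset_SA}), and now $\sigma_m(\ca_\theta)_{\ca_\eta} \asymp m^{1/\eta - 1/\theta}$ by Lemma \ref{A2A_simple}; this gives the upper bound $\sigma_m(S^r_{p,\theta}B)_{\ca_\eta} \lesssim m^{1/\eta-1/\theta} = m^{-(1/\theta - 1/\eta)}$ directly, with no logarithmic factor (consistent with $(d-1)\cdot 0 = 0$ in the exponent from part 1)). The strict inequality $\theta < \eta$ is what makes $m^{1/\eta-1/\theta}$ a genuinely decaying, nontrivial rate. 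The matching lower bound again comes from a single-layer fooling function, or from noting that the lower-bound construction in Lemma \ref{A2A_simple} (which is the $r=0$ specialization of Theorem \ref{A2A}) can be realized by a polynomial supported on one hyperbolic layer whose $S^r_{p,\theta}B$-norm is $\lesssim 1$ at this critical $r$.

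The main obstacle I anticipate is the lower bound bookkeeping: one must track the interplay between the three exponents $1/p$, $1/\theta$, $1/\eta$ and the factor $\log^\ast(m)^{d-1}$ coming from the cardinality $\#\hyp_n \asymp 2^n n^{d-1}$, and be careful that the fooling function is normalized in the $S^r_{p,\theta}B$-(quasi-)norm rather than in $S^\rho_\theta\ca$ — the two normalizations differ precisely by the $(\#\hyp_n)^{1/p}$ versus $(\#\hyp_n)^{1/\theta}$ discrepancy, and it is exactly this discrepancy that produces the asymmetric exponent $(d-1)(r+1-1/p-1/\theta)$ on the logarithm in \eqref{sigma_m(S^r_p,theta_B)_A_q}, as opposed to the symmetric $(d-1)r$ one sees for the pure Wiener-to-Wiener case in Theorem \ref{A2A}. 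It is cleanest to present the proof by reducing to Theorem \ref{A2A} for the upper bound and giving a self-contained single-layer fooling argument for the lower bound, checking the endpoint $r = 1/p+1/\theta-1$ separately via Lemma \ref{A2A_simple}.
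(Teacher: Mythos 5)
Your upper bound is exactly the paper's argument: embed $S^{r}_{p,\theta}B \hookrightarrow S^{\rho}_{\theta}\mathcal{A}$ with $\rho = r+1-\tfrac1p-\tfrac1\theta$ via Theorem \ref{SB+1/p_subset_SA}, then apply Theorem \ref{A2A} (resp.\ Lemma \ref{A2A_simple} at the endpoint $\rho=0$); and your lower-bound strategy, a fooling function spread over one hyperbolic layer and normalized in the Besov quasi-norm, is also the paper's. The problem is that your normalization is computed incorrectly, and with your constants the lower bound does not give the asserted rate. You claim $\Vert c\sum_{\bk\in\hyp_n}\exp(2\pi\mathrm{i}\bk\cdot)\Vert_{S^r_{p,\theta}B}\asymp c\,2^{nr}(\#\hyp_n)^{1/p}$. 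In fact the quasi-norm is $c\,\bigl(\sum_{\Vert\bj\Vert_{\ell_1}=n}2^{nr\theta}\Vert\sum_{\bk\in I_{\bj}}\exp(2\pi\mathrm{i}\bk\cdot)\Vert_{L_p}^{\theta}\bigr)^{1/\theta}$, where each block is a Dirichlet-type kernel with $\Vert\sum_{\bk\in I_{\bj}}\exp(2\pi\mathrm{i}\bk\cdot)\Vert_{L_p}\asymp 2^{n(1-1/p)}$ for $1<p<\infty$ (see \cite[(2.17)]{DTU18}; this uses $p>1$ and is much smaller than $\#I_{\bj}=2^n$), and the $\asymp n^{d-1}$ blocks enter through the outer $\ell_\theta$-sum, i.e.\ with exponent $1/\theta$, not $1/p$. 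The correct normalization is therefore $c\asymp 2^{-n(r+1-1/p)}n^{-(d-1)/\theta}$, and only then does $c\,(\#\hyp_n-m)^{1/\eta}$ reproduce $m^{-(r+1-\frac1p-\frac1\eta)}(\log^\ast m)^{(d-1)(r+1-\frac1p-\frac1\theta)}$. With your $c\asymp 2^{-n(r+1/p)}n^{-(d-1)/p}$ the same bookkeeping yields $m^{-(r+\frac1p-\frac1\eta)}(\log^\ast m)^{(d-1)r}$, which matches the theorem only when $p=\theta=2$; so the step ``one checks this equals the asserted rate'' fails as written.

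Your alternative ``reverse route'' (reusing the extremal polynomial $t$ from the proof of Theorem \ref{A2A}, which is supported on the hyperbolic cross of the $2m$ smallest weights rather than a single layer, and checking $\Vert t\Vert_{S^r_{p,\theta}B}\lesssim 1$) can be made to work, but not by the estimate you propose: bounding $\Vert\cdot\Vert_{L_p}$ by $\Vert\cdot\Vert_{\ca}$ on each block loses a factor $2^{l/p}$ on the $l$-th layer, and the resulting sum over layers then diverges, so it does not give $\Vert t\Vert_{S^r_{p,\theta}B}\lesssim 1$. One again needs the sharp bound $\asymp 2^{l(1-1/p)}$ per block; with it the exponents combine via $r-\rho+1-\tfrac1p=\tfrac1\theta$ and indeed $\Vert t\Vert_{S^r_{p,\theta}B}\asymp 1$. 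In short: the architecture is identical to the paper's and the endpoint case 2) is handled correctly, but the lower-bound norm computation must be corrected (Dirichlet kernel $L_p$-norm and the $1/\theta$-power of the block count) before the proof is complete.
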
 

\begin{proof}
 Applying Theorem \ref{SB+1/p_subset_SA}, that is  {the} embedding  
 $S^{r}_{p,\theta}B
 \hookrightarrow S^{r+1-{\frac{1}{p}}-{\frac{1}{\theta}}}_{\theta}\mathcal{A}$, $r\geq\frac{1}{p}+\frac{1}{\theta}-1$,  and also Lemma \ref{A2A_simple}  {in the situation of 2)} or Theorem \ref{A2A}  {in the regime of 1)}, we derive
 $$
 \sigma_{m}(S^{r}_{p,\theta}B)_{\mathcal{A}_\eta} 
\lesssim 
 \sigma_{m}(S^{r+1-{\frac{1}{p}}-{\frac{1}{\theta}}}_{\theta}\hspace{-1pt}\mathcal{A})_{\mathcal{A}_\eta}
 \asymp
 m^{-(r+1 -\frac{1}{p}- \frac{1}{\eta} )} (\log^\ast \hspace{-1pt} m)^{(d-1)(r+1 -\frac{1}{p}- \frac{1}{\theta})} . 
 $$

 Let us prove the lower bounds.
 For \(m \asymp 2^n n^{d-1}\), to get the corresponding lower bounds in \eqref{sigma_m(S^r_p,theta_B)_A_q} and \eqref{sigma_m(S^r=_2,theta_B)_{L_2}} we  construct a fooling function:
\begin{equation}
f(\bx) = C m^{-( r + 1 - \frac{1}{p})} \log^\ast(m)^{(d-1)(r+ 1 - \frac{1}{p} - \frac{1}{\theta})} \mathcal{D}_{\hyp_n} (\bx)  \,,
\end{equation}
where
\begin{equation}
\mathcal{D}_{\hyp_n} (\bx) = \sum_{\bk \in\hyp_n} \exp(2 \pi \mathrm{i} \bk \bx) \,
\end{equation}
and $\hyp_n$ is defined by \eqref{square}. This function has \(S^{r}_{p,\theta}B\)-norm
\begin{align}  \label{fnormedB}
\begin{split}
\|f\|_{S^{r}_{p,\theta}B} 
& \asymp m^{-( r + 1 - \frac{1}{p})} \log^\ast(m)^{(d-1)( r + 1 - \frac{1}{p} - \frac{1}{\theta})} 
\|\mathcal{D}_{\hyp_n}\|_{S^{r}_{p,\theta}B}\\
& \asymp m^{-( r + 1 - \frac{1}{p})} \log^\ast(m)^{(d-1)( r + 1 - \frac{1}{p} - \frac{1}{\theta})} \\
& \times \Big(\sum_{ {\Vert \bj \Vert_{\ell_1}}=n} 2^{  {\Vert \bj \Vert_{\ell_1}} r \theta } \Big\Vert \sum_{\bk\in I_{\bj} }  {\exp(2 \pi \mathrm{i} \bk \cdot)} \Big\Vert_{ {L_p}}^\theta  \Big)^\frac{1}{\theta}\\
& \asymp m^{-( r + 1 - \frac{1}{p})} \log^\ast(m)^{(d-1)( r + 1 - \frac{1}{p} - \frac{1}{\theta})} 2^{ r n } \Big(\sum_{ {\Vert \bj \Vert_{\ell_1}}=n} 2^{  {\Vert \bj \Vert_{\ell_1}} (1-\frac{1}{p}) \theta } \Big)^\frac{1}{\theta}\\
& \asymp m^{-( 1 - \frac{1}{p})} \log^\ast(m)^{(d-1)( 1 - \frac{1}{p} - \frac{1}{\theta})} 2^{ n ( 1-\frac{1}{p}) } \Big(\sum_{ {\Vert \bj \Vert_{\ell_1}}=n} 1 \Big)^\frac{1}{\theta} \\
& \asymp 1 ,
\end{split}
\end{align}
where  {\(I_{\bj}\) is defined in \eqref{Ik} and} we used  {(see e.g.\ \cite[(2.17)]{DTU18})}
 $$
\Big\Vert \sum_{\bk\in I_{\bj} } \exp(2\pi \mathrm{i} \bk \cdot)\Big\Vert_{ {L_p}}
 \asymp 2^{  {\Vert \bj \Vert_{\ell_1}} (1-\frac{1}{p})} , \ \ \ 
 1<p<\infty , 
 $$
 and
 $$
\sum_{ {\Vert \bj \Vert_{\ell_1}}=n} 1  
\asymp n^{d-1} .
 $$

Due to \eqref{C_j}, our choice \(m \asymp 2^n n^{d-1}\) and $\# \hyp_n \geq 2m$, it holds
 \begin{align} 
\sigma_m(S^{r}_{p,\theta}B)_{\ca_\eta} 
& \gtrsim m^{-( r + 1 - \frac{1}{p})} \log^\ast(m)^{(d-1)(r+ 1 - \frac{1}{p} - \frac{1}{\theta})} \sigma_m(\mathcal{D}_{\hyp_n})_{\ca_\eta} \\
& = m^{-( r + 1 - \frac{1}{p})} \log^\ast(m)^{(d-1)(r+ 1 - \frac{1}{p} - \frac{1}{\theta})} 
\big( \#    \hyp_n - \,   
%
 m \big)^{1/\eta} \\
  &  { {\gtrsim}} \,\,  m^{-(r+1 -\frac{1}{p}- \frac{1}{\eta} )}
   \log^\ast(m)^{(d-1)(r+1 -\frac{1}{p}- \frac{1}{\theta})} .
 \end{align} 
\end{proof}

\begin{theorem} \label{WinA}
Let 
 $1<p\leq 2$.  

 1) For $p \leq \eta\leq\infty$ and $r>\frac{2}{p}-1$,
   it holds
 $$ 
   \sigma_{m}(S^{r}_{p}W)_{\mathcal{A}_\eta} \asymp
   m^{-(r+1 -\frac{1}{p}- \frac{1}{\eta} )}
   \log^\ast(m)^{(d-1)(r+1 -\frac{2}{p})} .
 $$ 

 2) In the additional case where $r=\frac{2}{p}-1$ and 
 $p < \eta\leq\infty$,
   it holds
\begin{equation}\label{sigma_m(S^r=_2,theta_W)_{L_2}}
   \sigma_{m}(S^{r}_{p}W)_{\mathcal{A}_\eta} \asymp
   m^{-(\frac{1}{p}- \frac{1}{\eta} )} .
\end{equation}
\end{theorem}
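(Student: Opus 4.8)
The plan is to deduce both the upper and the lower bound from results already established: the Sobolev--Wiener embedding of Theorem~\ref{SW+2/p_subset_SA_p}, the Wiener--Wiener width estimates of Theorem~\ref{A2A} and Lemma~\ref{A2A_simple}, and the Besov--Wiener width estimate of Theorem~\ref{sigma_m_Besov_in_Wiener}. No new hard analysis is needed; the work is entirely in lining up parameter ranges.

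\emph{Upper bound.} First I would substitute $r\mapsto r+1-\tfrac2p$ in Theorem~\ref{SW+2/p_subset_SA_p} (admissible because $r+1-\tfrac2p\ge0$ by the standing hypothesis $r\ge\tfrac2p-1$), obtaining the embedding
\[
S^{r}_{p}W\hookrightarrow S^{\,r+1-\frac{2}{p}}_{p}\mathcal A ,
\]
hence $\sigma_m(S^r_pW)_{\mathcal A_\eta}\lesssim\sigma_m\bigl(S^{\,r+1-\frac2p}_{p}\mathcal A\bigr)_{\mathcal A_\eta}$. In case~1 we have $r+1-\tfrac2p>0$, and since $\eta\ge p$ forces $\bigl(\tfrac1\eta-\tfrac1p\bigr)_+=0<r+1-\tfrac2p$, Theorem~\ref{A2A} applies with summation index $p$ and smoothness $r+1-\tfrac2p$ and gives
\[
\sigma_m\bigl(S^{\,r+1-\frac2p}_{p}\mathcal A\bigr)_{\mathcal A_\eta}\asymp m^{\frac1\eta-\frac1p-(r+1-\frac2p)}\log^\ast(m)^{(d-1)(r+1-\frac2p)}=m^{-(r+1-\frac1p-\frac1\eta)}\log^\ast(m)^{(d-1)(r+1-\frac2p)},
\]
which is the claimed upper bound in case~1. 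In case~2 one has $r+1-\tfrac2p=0$, so the embedding reads $S^{\,2/p-1}_{p}W\hookrightarrow S^0_p\mathcal A=\mathcal A_p$, and since $p<\eta$ Lemma~\ref{A2A_simple} yields $\sigma_m(\mathcal A_p)_{\mathcal A_\eta}\asymp m^{1/\eta-1/p}$, i.e.\ the claimed bound $m^{-(1/p-1/\eta)}$.

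\emph{Lower bound.} Here I would use the classical inclusion $S^{r}_{p,\min(p,2)}B\hookrightarrow S^{r}_{p}W$, which for $1<p\le2$ reads $S^{r}_{p,p}B\hookrightarrow S^{r}_{p}W$ (it follows already from $\ell_p\hookrightarrow\ell_2$ and Fubini, or see \cite[Chapter~3]{DTU18}). This gives $\sigma_m(S^r_pW)_{\mathcal A_\eta}\gtrsim\sigma_m(S^r_{p,p}B)_{\mathcal A_\eta}$ for every $m$, so it suffices to invoke Theorem~\ref{sigma_m_Besov_in_Wiener} with $\theta=p$ (allowed since $p\le2$ and $p\le\eta$): in case~1 the hypothesis $r>\tfrac1p+\tfrac1p-1=\tfrac2p-1$ is exactly ours, producing $\sigma_m(S^r_{p,p}B)_{\mathcal A_\eta}\asymp m^{-(r+1-1/p-1/\eta)}\log^\ast(m)^{(d-1)(r+1-2/p)}$, while in case~2 one has $r=\tfrac1p+\tfrac1p-1=\tfrac2p-1$ together with $p<\eta$, producing $\sigma_m(S^{2/p-1}_{p,p}B)_{\mathcal A_\eta}\asymp m^{-(1/p-1/\eta)}$. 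Combined with the upper bounds, this proves the theorem in both cases.

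\emph{On the main difficulty.} There is no genuinely delicate step; the only point requiring care is to check that, after the substitution $\theta=p$, the parameter restrictions $r>(1/\eta-1/p)_+$ in Theorem~\ref{A2A} and $r>1/p+1/\theta-1$ (respectively $r=1/p+1/\theta-1$ with $\theta<\eta$) in Theorem~\ref{sigma_m_Besov_in_Wiener} coincide precisely with the hypotheses of the present statement, and to use the correct endpoint $\min(p,2)=p$ in the Besov--Sobolev inclusion. Should one prefer a self-contained lower bound, it can instead be produced directly from the fooling function $c\,\mathcal D_{\hyp_n}$ on a single step hyperbolic layer, using $\|\mathcal D_{\hyp_n}\|_{S^r_pW}\asymp 2^{n(r+1-1/p)}n^{(d-1)/p}$ and $\sigma_m(\mathcal D_{\hyp_n})_{\mathcal A_\eta}=(\#\hyp_n-m)^{1/\eta}$, but the route through the already-established Besov estimate is shorter.
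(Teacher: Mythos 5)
Your proposal is correct and follows essentially the same route as the paper's own proof: the upper bound via the embedding $S^{r}_{p}W \hookrightarrow S^{r+1-\frac{2}{p}}_{p}\mathcal{A}$ from Theorem \ref{SW+2/p_subset_SA_p} combined with Theorem \ref{A2A} (respectively Lemma \ref{A2A_simple} in the endpoint case), and the lower bound via $S^{r}_{p,p}B \hookrightarrow S^{r}_{p}W$ together with Theorem \ref{sigma_m_Besov_in_Wiener} with $\theta=p$. Your parameter bookkeeping matches the hypotheses exactly, so nothing further is needed.
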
 

\begin{proof}
 The upper bounds are based on Theorem \ref{SW+2/p_subset_SA_p}, that is, the embedding  
 $S^{r}_{p}W
 \hookrightarrow S^{r+1-{\frac{2}{p}}}_{p}\mathcal{A}$, $r\geq\frac{2}{p}-1$,  and Theorem \ref{A2A},  {or in the regime of 2) Lemma \ref{A2A_simple}}:
  $$
 \sigma_{m}(S^{r}_{p}W)_{\mathcal{A}_\eta} 
\lesssim 
 \sigma_{m}(S^{r+1-{\frac{2}{p}}}_{p}\mathcal{A})_{\mathcal{A}_\eta}
 \asymp
 m^{-(r+1 -\frac{1}{p}- \frac{1}{\eta} )}
\log^\ast(m)^{(d-1)(r+1 -\frac{2}{p})} . 
 $$

 The lower bounds are based on the embedding $S^{r}_{p,p}B
 \hookrightarrow S^r_{p}W$,  {$1<p\leq 2$} (see, for instance, \cite[Lemma 3.4.1 (iv)]{DTU18}), and Theorem \ref{sigma_m_Besov_in_Wiener}:
  $$
 \sigma_{m}(S^{r}_{p}W)_{\mathcal{A}_\eta} 
\gtrsim 
 \sigma_{m}(S^{r}_{p,p}B)_{\mathcal{A}_\eta}
 \asymp
 m^{-(r+1 -\frac{1}{p}- \frac{1}{\eta} )}
\log^\ast(m)^{(d-1)(r+1 -\frac{2}{p})} . 
 $$
  \end{proof}

\begin{theorem}\label{sigma_m_Besov_in_Wiener_q<theta} 
Let 
 $1<p\leq 2$,    
$0<\eta \leq \theta\leq 2$ and $r>\frac{1}{p}+\frac{1}{\eta}-1$,
   it holds
\begin{equation}\label{sigma_m(S^r_p,theta_B)_A_q<theta}
   \sigma_{m}(S^{r}_{p,\theta}B)_{\mathcal{A}_\eta} \asymp
   m^{-(r+1 -\frac{1}{p}- \frac{1}{\eta} )}
   \log^\ast(m)^{(d-1)(r+1 -\frac{1}{p}- \frac{1}{\theta})}  .
\end{equation}
\end{theorem}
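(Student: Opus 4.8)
The plan is to obtain both bounds from results already established in the paper, essentially by composing the embedding of Besov spaces into weighted Wiener spaces with the best $m$-term bounds between weighted and unweighted Wiener spaces, together with the fooling function from Theorem \ref{sigma_m_Besov_in_Wiener}.

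For the upper bound I would first invoke Theorem \ref{SB+1/p_subset_SA}, which under the present hypotheses ($1<p\le 2$, $0<\theta\le 2$, and $r\ge \frac{1}{p}+\frac{1}{\theta}-1$, the latter being implied by $r>\frac{1}{p}+\frac{1}{\eta}-1$ since $\eta\le\theta$) gives the embedding $S^{r}_{p,\theta}B\hookrightarrow S^{r+1-\frac{1}{p}-\frac{1}{\theta}}_{\theta}\mathcal{A}$. This reduces the problem to estimating $\sigma_m\bigl(S^{r+1-\frac{1}{p}-\frac{1}{\theta}}_{\theta}\mathcal{A}\bigr)_{\mathcal{A}_\eta}$, which is precisely the quantity handled by Theorem \ref{A2A}. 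The key point is that applying Theorem \ref{A2A} in the regime $\eta\le\theta$ requires $r+1-\frac{1}{p}-\frac{1}{\theta}>\bigl(\frac{1}{\eta}-\frac{1}{\theta}\bigr)_+=\frac{1}{\eta}-\frac{1}{\theta}$, i.e.\ $r>\frac{1}{p}+\frac{1}{\eta}-1$, which is exactly our assumption. Theorem \ref{A2A} then yields the order $m^{\frac{1}{\eta}-\frac{1}{\theta}-(r+1-\frac{1}{p}-\frac{1}{\theta})}(\log^\ast m)^{(d-1)(r+1-\frac{1}{p}-\frac{1}{\theta})}$, and simplifying $\frac{1}{\eta}-\frac{1}{\theta}-(r+1-\frac{1}{p}-\frac{1}{\theta})=-(r+1-\frac{1}{p}-\frac{1}{\eta})$ gives the claimed upper bound $\sigma_m(S^{r}_{p,\theta}B)_{\mathcal{A}_\eta}\lesssim m^{-(r+1-\frac{1}{p}-\frac{1}{\eta})}(\log^\ast m)^{(d-1)(r+1-\frac{1}{p}-\frac{1}{\theta})}$.

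For the lower bound I would reuse verbatim the fooling-function argument from the proof of Theorem \ref{sigma_m_Besov_in_Wiener}: choose $m\asymp 2^n n^{d-1}$ with $\#\hyp_n\ge 2m$ and take $f=Cm^{-(r+1-\frac{1}{p})}(\log^\ast m)^{(d-1)(r+1-\frac{1}{p}-\frac{1}{\theta})}\mathcal{D}_{\hyp_n}$, whose $S^{r}_{p,\theta}B$-norm is $\asymp 1$ by the computation in \eqref{fnormedB}. Since all Fourier coefficients of $\mathcal{D}_{\hyp_n}$ equal $1$, a best $m$-term approximation in $\mathcal{A}_\eta$ can only remove $m$ of the $\#\hyp_n$ unit coefficients, so $\sigma_m(\mathcal{D}_{\hyp_n})_{\mathcal{A}_\eta}=(\#\hyp_n-m)^{1/\eta}\gtrsim m^{1/\eta}$. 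Multiplying through gives the matching lower bound. Crucially, this construction uses no ordering between $\theta$ and $\eta$, so it transfers unchanged to the present setting.

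The only point requiring care — and the one I would check most carefully — is the bookkeeping of the smoothness and summability parameters, ensuring that the single hypothesis $r>\frac{1}{p}+\frac{1}{\eta}-1$ is exactly what both Theorem \ref{SB+1/p_subset_SA} (via $r\ge\frac{1}{p}+\frac{1}{\theta}-1$) and Theorem \ref{A2A} (via $r+1-\frac{1}{p}-\frac{1}{\theta}>\frac{1}{\eta}-\frac{1}{\theta}$) demand, and that the $\log$-exponent coming from Theorem \ref{A2A} is indeed $(d-1)(r+1-\frac{1}{p}-\frac{1}{\theta})$. Beyond this there is no new analytic content: the proof is a two-step composition for the upper bound and a direct reuse of the fooling function from Theorem \ref{sigma_m_Besov_in_Wiener} for the lower bound.
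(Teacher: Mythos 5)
Your proposal is correct and is essentially identical to the paper's own proof: the upper bound is obtained by composing the embedding of Theorem \ref{SB+1/p_subset_SA} with Theorem \ref{A2A} (whose condition $r+1-\frac{1}{p}-\frac{1}{\theta}>\frac{1}{\eta}-\frac{1}{\theta}$ is exactly $r>\frac{1}{p}+\frac{1}{\eta}-1$), and the lower bound reuses the fooling function $\mathcal{D}_{\hyp_n}$ from Theorem \ref{sigma_m_Besov_in_Wiener}, which indeed requires no ordering between $\theta$ and $\eta$. Your parameter bookkeeping (including that $\eta\leq\theta$ makes $r\geq\frac{1}{p}+\frac{1}{\theta}-1$ automatic) agrees with the paper.
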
 

\begin{proof}
The upper bounds are based on Theorem \ref{SB+1/p_subset_SA}, that is, the  embedding  
 $S^{r}_{p,\theta}B
 \hookrightarrow S^{r+1-{\frac{1}{p}}-{\frac{1}{\theta}}}_{\theta}\mathcal{A}$, $r\geq\frac{1}{p}+\frac{1}{\theta}-1$,  and Theorem \ref{A2A} with the smoothness restriction $r+1-\frac{1}{p}-\frac{1}{\theta}>\frac{1}{\eta}-\frac{1}{\theta}$, that is $r>\frac{1}{p}+\frac{1}{\eta}-1$:
 $$
 \sigma_{m}(S^{r}_{p,\theta}B)_{\mathcal{A}_\eta} 
\lesssim 
 \sigma_{m}(S^{r+1-{\frac{1}{p}}-{\frac{1}{\theta}}}_{\theta}\hspace{-1pt}\mathcal{A})_{\mathcal{A}_\eta}
 \asymp
 m^{-(r+1 -\frac{1}{p}- \frac{1}{\eta} )}
\log^\ast(m)^{(d-1)(r+1 -\frac{1}{p}- \frac{1}{\theta})} . 
 $$

The proof of the lower bound is the same as in Theorem  \ref{sigma_m_Besov_in_Wiener}.
\end{proof}

\begin{theorem} \label{WinA_q<p}
Let 
 $1<p\leq 2$,  
 $0<\eta\leq p$ and $r>\frac{1}{p}+\frac{1}{\eta}-1$,
   it holds
 $$ 
   \sigma_{m}(S^{r}_{p}W)_{\mathcal{A}_\eta} \asymp
   m^{-(r+1 -\frac{1}{p}- \frac{1}{\eta} )}
   \log^\ast(m)^{(d-1)(r+1 -\frac{2}{p})} .
 $$ 
\end{theorem}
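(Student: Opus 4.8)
The plan is to repeat, for the Sobolev space $S^r_pW$, the same sandwiching strategy that was used for Theorem \ref{sigma_m_Besov_in_Wiener_q<theta}: squeeze $S^r_pW$ between a weighted Wiener space (for the upper bound) and a Besov space (for the lower bound), both of whose best $m$-term widths in $\ca_\eta$ are already known.

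For the upper bound I would first apply Theorem \ref{SW+2/p_subset_SA_p}, i.e.\ the embedding $S^{r}_{p}W \hookrightarrow S^{r+1-\frac{2}{p}}_{p}\mathcal{A}$. This embedding is available since $r > \tfrac{1}{p}+\tfrac{1}{\eta}-1 \geq \tfrac{2}{p}-1$, the last inequality holding because $\eta \leq p$ forces $\tfrac{1}{\eta}\geq\tfrac{1}{p}$. By monotonicity of $\sigma_m(\cdot)_{\ca_\eta}$ this reduces the problem to estimating $\sigma_m(S^{r+1-2/p}_p\ca)_{\ca_\eta}$. Now I would invoke Theorem \ref{A2A} with smoothness parameter $s=r+1-\tfrac{2}{p}$, summability parameter $\theta=p$, and target parameter $\eta$: its hypothesis $s>(\tfrac{1}{\eta}-\tfrac{1}{p})_+$ reads, again using $\eta\leq p$, exactly $r+1-\tfrac{2}{p}>\tfrac{1}{\eta}-\tfrac{1}{p}$, that is $r>\tfrac{1}{p}+\tfrac{1}{\eta}-1$. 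Collecting exponents, $\sigma_m(S^{r+1-2/p}_p\ca)_{\ca_\eta}\asymp m^{\frac{1}{\eta}-\frac{1}{p}-(r+1-\frac{2}{p})}(\log^\ast m)^{(d-1)(r+1-\frac{2}{p})}= m^{-(r+1-\frac1p-\frac1\eta)}(\log^\ast m)^{(d-1)(r+1-\frac2p)}$, which is the claimed upper bound.

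For the lower bound I would use the classical embedding $S^{r}_{p,p}B\hookrightarrow S^r_pW$ for $1<p\leq 2$ (see \cite[Lemma 3.4.1 (iv)]{DTU18}), exactly as in the proof of Theorem \ref{WinA}, to get $\sigma_m(S^r_pW)_{\ca_\eta}\gtrsim \sigma_m(S^r_{p,p}B)_{\ca_\eta}$. Since $0<\eta\leq p\leq 2$ and $r>\tfrac1p+\tfrac1\eta-1$, Theorem \ref{sigma_m_Besov_in_Wiener_q<theta} with $\theta=p$ applies and gives $\sigma_m(S^r_{p,p}B)_{\ca_\eta}\asymp m^{-(r+1-\frac1p-\frac1\eta)}(\log^\ast m)^{(d-1)(r+1-\frac1p-\frac1p)}$, and since $(d-1)(r+1-\tfrac1p-\tfrac1p)=(d-1)(r+1-\tfrac2p)$ this matches the upper bound.

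I do not expect a genuine obstacle here; the whole argument is an assembly of Theorems \ref{SW+2/p_subset_SA_p}, \ref{A2A} and \ref{sigma_m_Besov_in_Wiener_q<theta}. The only point that requires care is verifying that each smoothness shift keeps the parameters inside the admissible ranges of those three results, and this is precisely what the hypothesis $r>\tfrac1p+\tfrac1\eta-1$ together with $\eta\leq p$ guarantees.
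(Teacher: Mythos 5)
Your proposal is correct and follows essentially the same route as the paper: the upper bound via the embedding $S^r_pW \hookrightarrow S^{r+1-\frac{2}{p}}_{p}\mathcal{A}$ (Theorem \ref{SW+2/p_subset_SA_p}) combined with Theorem \ref{A2A} under the restriction $r+1-\frac{2}{p}>\frac{1}{\eta}-\frac{1}{p}$, and the lower bound via $S^{r}_{p,p}B\hookrightarrow S^r_pW$ together with Theorem \ref{sigma_m_Besov_in_Wiener_q<theta}. Your parameter checks (using $\eta\leq p$ to verify admissibility of the embeddings) are exactly the points the paper relies on as well.
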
 

\begin{proof}
 The upper bounds are based on the Theorems \ref{SW+2/p_subset_SA_p}, that is, the embedding  
 $S^{r}_{p}W
 \hookrightarrow S^{r+1-{\frac{2}{p}}}_{p}\mathcal{A}$, $r\geq\frac{2}{p}-1$,  and Theorem \ref{A2A} with the smoothness restriction $r+1-\frac{2}{p}>\frac{1}{\eta}-\frac{1}{p}$, that is $r>\frac{1}{p}+\frac{1}{\eta}-1$:
  $$
 \sigma_{m}(S^{r}_{p}W)_{\mathcal{A}_\eta} 
\lesssim 
 \sigma_{m}(S^{r+1-{\frac{2}{p}}}_{p}\mathcal{A})_{\mathcal{A}_\eta}
 \asymp
 m^{-(r+1 -\frac{1}{p}- \frac{1}{\eta} )}
\log^\ast(m)^{(d-1)(r+1 -\frac{2}{p})} . 
 $$
 
 The lower bounds are based on the embedding $S^{r}_{p,p}B
 \hookrightarrow S^r_{p}W$ (see, for instance, \cite[Lemma 3.4.1 (iv)]{DTU18}) and Theorem \ref{sigma_m_Besov_in_Wiener_q<theta}:
  $$
 \sigma_{m}(S^{r}_{p}W)_{\mathcal{A}_\eta} 
\gtrsim 
 \sigma_{m}(S^{r}_{p,p}B)_{\mathcal{A}_\eta}
 \asymp
 m^{-(r+1 -\frac{1}{p}- \frac{1}{\eta} )}
\log^\ast(m)^{(d-1)(r+1 -\frac{2}{p})} . 
 $$
\end{proof} 
We may now use these embeddings, or more specifically the regime where we embed into \(\ca\), to again get results for the sampling numbers of these spaces measured in \(L_q\), \(2 \leq q \leq \infty\), by applying Theorem \ref{samp_A1}.

\begin{cor}\label{varrho_for_SB_SW_above}
 For \(  {1<p \leq 2} \leq q \leq \infty\),  {$1\leq \theta \leq 2$}  and \(r > 1/p\) we have
\begin{equation}\label{samp_SB_above}
           \varrho_{m}(S^{r}_{p,\theta}B)_{L_q} \lesssim
   m^{-(r -\frac{1}{p} +\frac{1}{q})}
   \log^\ast(m)^{(d-1)(r+1 -\frac{1}{p}- \frac{1}{\theta}) +3(r -\frac{1}{p} +\frac{1}{q}) },
    \end{equation}
    as well as  {for \(1<p \leq 2 \leq q \leq \infty\) and \(r > 1/p\) we have}
\begin{equation}\label{samp_SW_above}
   \varrho_{m}(S^{r}_{p}W)_{L_q} \lesssim
   m^{-(r-\frac{1}{p}+\frac{1}{q})}
   \log^\ast(m)^{(d-1)(r+1 -\frac{2}{p}) + 3(r-\frac{1}{p}+\frac{1}{q})} .
\end{equation}
\end{cor}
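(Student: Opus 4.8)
The plan is to obtain \eqref{samp_SB_above} and \eqref{samp_SW_above} by composing results already at hand, exactly as in the proof of Corollary \ref{nlsn_up_good}: feed a best $n$-term bound \emph{in the Wiener algebra} $\ca$ into the sampling estimate of Theorem \ref{samp_A1}, and then convert between the number of samples $m$ and the parameter $n$. The two needed best $n$-term bounds in $\ca$ are just the $\eta=1$ specialisations of Section~\ref{asymp_W}'s results (using $\ca_1=\ca$): Theorem \ref{sigma_m_Besov_in_Wiener_q<theta} gives, for $1<p\le2$, $1\le\theta\le2$, $r>1/p$,
\[
\sigma_n\big(S^{r}_{p,\theta}B\big)_{\ca}\ \lesssim\ n^{-(r-\frac1p)}\log^\ast(n)^{(d-1)(r+1-\frac1p-\frac1\theta)},
\]
while Theorem \ref{WinA_q<p} gives, for $1<p\le2$, $r>1/p$,
\[
\sigma_n\big(S^{r}_{p}W\big)_{\ca}\ \lesssim\ n^{-(r-\frac1p)}\log^\ast(n)^{(d-1)(r+1-\frac2p)};
\]
note that the hypothesis $r>1/p$ of the corollary is precisely the condition $r>\frac1p+\frac1\eta-1$ of those theorems at $\eta=1$.

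The second ingredient is an estimate of the truncation term $E_{[-M,M]^d}(\cdot)_{L_\infty}$ that appears in Theorem \ref{samp_A1}. Here I would invoke the embeddings $S^{r}_{p,\theta}B\hookrightarrow S^{s}_{\theta}\ca$ with $s:=r+1-\frac1p-\frac1\theta>0$ (Theorem \ref{SB+1/p_subset_SA}) and $S^{r}_{p}W\hookrightarrow S^{r+1-2/p}_{p}\ca$ (Theorem \ref{SW+2/p_subset_SA_p}), which reduces matters to bounding $E_{[-M,M]^d}\big(S^{s}_{\vartheta}\ca\big)_{L_\infty}$, where $\vartheta=\theta$ in the Besov case and $\vartheta=p$ in the Sobolev case. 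Taking $g$ to be the Fourier truncation of $f$ to $[-M,M]^d$ and using $\|\cdot\|_{L_\infty}\le\|\cdot\|_{\ca}$, one gets $E_{[-M,M]^d}(f)_{L_\infty}\le\sum_{\max_i|k_i|>M}|\hat f(\bk)|$; the Hölder inequality against the weights $\prod_i(1+|k_i|)^{s}$ with conjugate exponent $\vartheta'$ bounds this by $\big(\sum_{\max_i|k_i|>M}\prod_i(1+|k_i|)^{-s\vartheta'}\big)^{1/\vartheta'}\|f\|_{S^{s}_{\vartheta}\ca}$. The tail sum converges exactly because $r>1/p$ forces $s\vartheta'>1$, and, splitting it according to which coordinate exceeds $M$, it is $\asymp M^{-(s\vartheta'-1)}$ with no logarithmic factor; since $s-\frac1{\vartheta'}=s-1+\frac1\vartheta=r-\frac1p$ in both cases, this yields $E_{[-M,M]^d}(\cdot)_{L_\infty}\lesssim M^{-(r-1/p)}$.

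Finally I would assemble the pieces in Theorem \ref{samp_A1}. As $r>1/p$ we have $S^{r}_{p,\theta}B\hookrightarrow\ca\hookrightarrow C(\tor^d)$ (and likewise for $S^r_pW$), so point evaluations are continuous and a single operator $R_m$ serves the whole unit ball; taking the supremum over that ball and the infimum over the sample points, and choosing $M=\big\lceil n^{(r-1/p+1/2)/(r-1/p)}\big\rceil$ so that $M^{-(r-1/p)}\le n^{-1/2}n^{-(r-1/p)}$, the truncation term is absorbed and one is left with
\[
\varrho_m\big(S^{r}_{p,\theta}B\big)_{L_q}\ \lesssim\ n^{\frac12-\frac1q}\,n^{-\frac12}\,n^{-(r-\frac1p)}\log^\ast(n)^{(d-1)(r+1-\frac1p-\frac1\theta)}\ =\ n^{-(r-\frac1p+\frac1q)}\log^\ast(n)^{(d-1)(r+1-\frac1p-\frac1\theta)},
\]
valid for $m\ge\lceil Cnd\log^\ast(n)^2\log^\ast(M)\rceil$. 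Since $M$ is a fixed power of $n$ we have $\log^\ast(M)\asymp\log^\ast(n)$, hence $m\asymp n\log^\ast(n)^3$; inverting this ($n\asymp m\log^\ast(m)^{-3}$, $\log^\ast(n)\asymp\log^\ast(m)$) turns the bound into $m^{-(r-1/p+1/q)}\log^\ast(m)^{(d-1)(r+1-1/p-1/\theta)+3(r-1/p+1/q)}$, which is \eqref{samp_SB_above}; \eqref{samp_SW_above} follows identically with $(d-1)(r+1-\frac1p-\frac1\theta)$ replaced by $(d-1)(r+1-\frac2p)$. A monotonicity argument removes the restriction on $m$. This is genuinely a corollary, so there is no serious obstacle; the one place that needs a little care is the estimate of $E_{[-M,M]^d}$ when $\vartheta>1$, where the cheap inclusion $\ell_\vartheta\hookrightarrow\ell_1$ (available only for $\vartheta\le1$) is unavailable, so one must run the Hölder argument above and check both that $r>1/p$ makes the tail sum converge and that it carries no spurious logarithm, and one must verify that enlarging $M$ to a fixed power of $n$ costs only a $d$-independent power of $\log^\ast$, which is the source of the summand $3(r-\frac1p+\frac1q)$ in the exponent of $\log^\ast(m)$.
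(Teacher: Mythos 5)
Your proposal is correct and follows essentially the same route as the paper: feed the $\eta=1$ best $n$-term bounds from Theorems \ref{sigma_m_Besov_in_Wiener_q<theta} and \ref{WinA_q<p} into Theorem \ref{samp_A1}, suppress the truncation term by taking $M$ a fixed power of $n$, and convert $m\asymp n\log^\ast(n)^3$ into the additional exponent $3\big(r-\tfrac1p+\tfrac1q\big)$. Your H\"older-type estimate $E_{[-M,M]^d}(\cdot)_{L_\infty}\lesssim M^{-(r-1/p)}$ (with the correspondingly larger choice of $M$) is in fact a more careful treatment of the truncation term than the paper's bound $M^{-r}$ used with $M=\lceil n^{(r-1/p+1/2)/r}\rceil$, and since only $\log^\ast(M)$ enters the sample budget this makes no difference to the final estimate.
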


\begin{proof}
Here, to prove \eqref{samp_SB_above}, we follow the idea of the proof of Corollary~\ref{nlsn_up_good}.
Choose  \(n,M\) as above in Theorem \ref{nlsn_up}, where \(\gamma\) is different then before. For both Besov and Sobolev spaces, the error of best trigonometric approximation can be bounded like in Lemma \ref{E_M} but for different constants depending on \(r,\theta\). Bounds of this type can be found in \cite[Lemma 2]{MPU25_1}. The fact that the constants are not given there for all regimes is not a hurdle since \(M\) enters only in a logarithmic way. Then we get from Theorem  \ref{samp_A1} together with Theorem \ref{sigma_m_Besov_in_Wiener_q<theta} (with \(\eta =1\)) 
    \begin{align}
        \begin{split}   \varrho_m\big(S^{r}_{p,\theta}B\big)_{L_q} & \leq C n^{1/2-1/q}\left(n^{-1/2} \sigma_n\big(S^{r}_{p,\theta}B\big)_{\ca} + E_{[-M,M]^d}\big(S^{r}_{p,\theta}B\big)_{L_\infty}\right) \\
    & \leq C n^{1/2-1/q} (n^{-(r-{\frac{1}{p}+\frac{1}{2}})} \log^\ast(n)^{(d-1)(r+1-\frac{1}{p}-\frac{1}{\theta})} + M^{-\gamma}) \\
    & \leq  C n^{-(r-{\frac{1}{p}+\frac{1}{q}})} \log^\ast(n)^{(d-1)(r+1-\frac{1}{p}-\frac{1}{\theta})}\\
    & \leq  C m^{-(r-\frac{1}{p}+\frac{1}{q})} \log^\ast(m)^{(d-1)(r+1-\frac{1}{p}-\frac{1}{\theta}) + 3(r-{\frac{1}{p}+\frac{1}{q}})}.
        \end{split}
    \end{align}

To prove \eqref{samp_SW_above} we completely repeat the steps of the proof of \eqref{samp_SB_above} with the application of Theorem~\ref{WinA_q<p} (with \(\eta =1\)) instead of Theorem~\ref{sigma_m_Besov_in_Wiener_q<theta}.
\end{proof}

  In the above result, just as described in Remark \ref{sqrtlog}, we managed to  {remove} the additional \(\sqrt{\log^\ast(m)}\) term, that would have appeared when using Proposition \ref{samp_Lp}, instead of Theorem \ref{samp_A1}. In the case $q=2$ the bound in Corollary \ref{varrho_for_SB_SW_above} has already been obtained by Dai, Temlyakov \cite{DaTe24} using a different technique involving ``weak orthogonal matching pursuit'' {\tt (WOMP)} as a recovery method. The estimates obtained here together with the results from \cite{MPU25_1} show that this behavior is also possible using  {\tt (rLasso)} as recovery procedures.  
\vspace{0.5cm}

\paragraph{Acknowledgments}
   The first named author is supported by the ESF, being co-financed by the European Union and from tax revenues on the basis of the budget adopted by the Saxonian State Parliament. The second named author is supported 
   by the Philipp Schwartz Initiative of the Alexander von Humboldt Foundation. 
  The third named author is supported by the German Research Foundation (DFG 403/4-1). The authors would like to thank T. Sommerfeld for helpful comments on the subject and presentation of the paper. We would also like to thank W. Sickel for pointing out to us the references \cite{DPW24}, \cite{NNS22} and \cite{NN22}
    {as well as A. Shidlich for pointing out the references \cite{Stepanets_UMZh_2001_N8}, \cite{Stepanets_MAT_2005} and \cite{Gao_JAT_2010}.}
   { Finally, we would like to thank the (anonymous) reviewers for their very thorough and helpful comments. }

\end{document}